\numberwithin{equation}{section}
\setlist[enumerate,1]{label={\rm(\roman*)}, ref={\rm\roman*}} 
\theoremstyle{plain}
\newtheorem{thmm}{Theorem}[section]
\newtheorem{thm}[thmm]{Theorem}
\newtheorem*{thm*}{Theorem}
\newtheorem{lem}[thmm]{Lemma}
\newtheorem{lem-def}[thmm]{Lemma-Definition}
\newtheorem*{claim}{Claim}
\newtheorem{pro}[thmm]{Proposition}
\newtheorem{pro-def}[thmm]{Proposition-Definition}
\newtheorem{cor}[thmm]{Corollary}
\newtheorem{conj}[thmm]{Conjecture}
\newtheorem{que}[thmm]{Question}
\newtheorem{prob}[thmm]{Problem}
\theoremstyle{definition}
\theoremstyle{remark}
\newtheorem{rem}[thmm]{Remark}
\newcommand{\ssec}{\subsection}
\newcommand{\ol}{\overline}
\newcommand{\ti}[1]{\tilde{#1}}
\newcommand{\ul}{\underline}
\newcommand{\vast}{\bBigg@{4}}
\newcommand{\Vast}{\bBigg@{5}}
\newcommand{\wt}{\widetilde}
\newcommand{\wh}{\widehat}
\newcommand\reallywidehat[1]{%
\savestack{\tmpbox}{\stretchto{%
  \scaleto{%
    \scalerel*[\widthof{\ensuremath{#1}}]{\kern-.6pt\bigwedge\kern-.6pt}%
    {\rule[-\textheight/2]{1ex}{\textheight}}%WIDTH-LIMITED BIG WEDGE
  }{\textheight}% 
}{0.5ex}}%
\stackon[1pt]{#1}{\tmpbox}%
}
\definecolor{armygreen}{rgb}{0.29, 0.33, 0.13}
\definecolor{ao(english)}{rgb}{0.5, 0.2, 0.0}
\newcommand{\bC}{\mathbf{C}}
\newcommand{\bH}{\mathbf{H}}
\newcommand{\bP}{\mathbf{P}}
\newcommand{\bQ}{\mathbf{Q}}
\newcommand{\bR}{\mathbf{R}}
\newcommand{\bW}{\mathbf{W}}
\newcommand{\bZ}{\mathbf{Z}}
\newcommand{\bm}{\mathbf{m}}
\newcommand{\cB}{\mathcal{B}}
\newcommand{\cC}{\mathcal{C}}
\newcommand{\cE}{\mathcal{E}}
\newcommand{\cF}{\mathcal{F}}
\newcommand{\cH}{\mathcal{H}}
\newcommand{\cJ}{\mathcal{J}}
\newcommand{\cK}{\mathcal{K}}
\newcommand{\cL}{\mathcal{L}}
\newcommand{\cM}{\mathcal{M}}
\newcommand{\cO}{\mathcal{O}}
\newcommand{\sC}{\mathscr{C}}
\newcommand{\sX}{\mathscr{X}}
\newcommand{\sY}{\mathscr{Y}}
\newcommand{\gO}{\Omega}
\newcommand{\gS}{\Sigma}
\newcommand{\ga}{\alpha}
\newcommand{\gb}{\beta}
\newcommand{\gd}{\delta}
\newcommand{\gl}{\lambda}
\newcommand{\go}{\omega}
\newcommand{\gs}{\sigma}
\newcommand{\Alb}{\mathrm{Alb}}
\newcommand{\alb}{\mathrm{alb}}
\newcommand{\BM}{\mathrm{BM}}
\newcommand{\cl}{\mathrm{cl}}
\newcommand{\coker}{\mathrm{coker}}
\newcommand{\Ima}{\mathrm{Im}}
\newcommand{\Int}{\mathrm{Int}}
\newcommand{\NS}{\mathrm{NS}}
\newcommand{\PD}{\mathrm{PD}}
\newcommand{\PH}{\mathrm{PH}}
\newcommand{\Psef}{\mathrm{Psef}}
\newcommand{\Sing}{\mathrm{Sing}}
\newcommand{\supp}{\mathrm{supp}}
\newcommand{\cf}{\textit{cf.}}
\newcommand{\eg}{\textit{e.g.}}
\newcommand{\ie}{\textit{i.e.}}
\newcommand{\bss}{\backslash}
\newcommand{\cnec}{\mathrel{:=}}
\newcommand{\dr}{\partial}
\newcommand{\drbar}{\bar{\partial}}
\newcommand{\ddbar}{\dr\bar{\dr}}
\newcommand{\st}{\star}
\renewcommand{\(}{\left(}
\renewcommand{\)}{\right)}
\newcommand{\la}{\mathopen{\langle}}
\newcommand{\ra}{\mathclose{\rangle}}
\newcommand{\dto}{\dashrightarrow}
\newcommand{\lto}{\leftarrow}
\newcommand{\hto}{\hookrightarrow}
\newcommand{\xto}[1]{\xrightarrow{ #1 }}
\newcommand{\xhto}[1]{\xhookrightarrow{ #1 }}
\newcommand{\xlto}[1]{\xleftarrow{ #1 }}
\def\isolow{\vbox to 0pt{\vss\hbox{$\sim$}\vskip-3pt}}
\newcommand{\eto}{\xrightarrow{\isolow}}
\newcommand{\elra}{\xrightarrow{\;\isolow\;}}
\let\orgdescriptionlabel\descriptionlabel
\renewcommand*{\descriptionlabel}[1]{%
  \let\orglabel\label
  \let\label\@gobble
  \phantomsection
  \edef\@currentlabel{#1}%
  \let\label\orglabel
  \orgdescriptionlabel{#1}%
}
\tikzset{node distance=2cm, auto}
\newcommand{\lra}{\longrightarrow}
\newcommand{\supth}[1]{\ensuremath{#1^{\mathrm{th}}}}
\title{On the dual positive cones and the algebraicity of compact K\"ahler manifolds}
\author{Hsueh-Yung Lin}
\address{Department of Mathematics, National Taiwan University, 
and National Center for Theoretical Sciences,
Taipei, Taiwan}
\email{hsuehyunglin@ntu.edu.tw}
\begin{document}

%%%%%%%%%%%%%%%%%%%%%%%%%%%%%%%
% Title page
%%%%%%%%%%%%%%%%%%%%%%%%%%%%%%%

\maketitle

\begin{prelims}

\DisplayAbstractInEnglish

\bigskip

\DisplayKeyWords

\medskip

\DisplayMSCclass

\end{prelims}

%%%%%%%%%%%%%%%%%%%%%
% Table of Contents
%%%%%%%%%%%%%%%%%%%%%

\newpage

\setcounter{tocdepth}{1}

\tableofcontents

%%%%%%%%%%%%%%%%%%%%%
% Content begins here
%%%%%%%%%%%%%%%%%%%%%

\section{Introduction}

\ssec{Dual statements of the Kodaira embedding theorem}
%\hfill

Let $X$ be a compact K\"ahler manifold of dimension $n$. The celebrated Kodaira embedding theorem asserts that if the K\"ahler cone $\cK(X)$ of $X$ contains a rational cohomology class, then $X$ is projective; \cf~\cite{Kodaira1954}. Now consider the \textit{dual K\"ahler cone}
$$\cK(X)^\vee = \left\{ \ \ga \in H^{n-1,n-1}(X,\bR) \cnec H^{n-1,n-1}(X) \cap H^{2n-2}(X,\bR) \ \big{ | } \ \la \ga,\go \ra \ge 0 \text{ for every } \go \in \cK(X) \ \right\} $$
of $X$, where
$$\la \ \ \ , \ \ \ \ra \colon H^{n-1,n-1}(X,\bR) \otimes H^{1,1}(X,\bR) \lra \bR$$
is the perfect pairing defined by  Poincar\'e duality.  As the closure $\ol{\cK(X)}$ of $\cK(X)$ in $H^{1,1}(X,\bR)$ is a salient cone, the dual $\cK(X)^\vee$ has non-empty interior $\Int\(\cK(X)^\vee\)$ in $H^{n-1,n-1}(X,\bR)$.  The following problem was first asked and studied by Oguiso and Peternell~\cite{OguisoPeternellconeKahduasurf, OguisoPeternellconeKahdual} in search of a dual statement of the Kodaira embedding theorem.

\begin{prob}[Oguiso--Peternell]\label{prob-OP}
Let $X$ be a compact K\"ahler manifold of dimension $n$ such that $\Int\(\cK(X)^\vee\)$ contains an element of $H^{2n-2}(X,\bQ)$.  How algebraic is $X$? For instance, what are the possible algebraic dimensions of $X$?
\end{prob}

Here is another problem dual to the Kodaira embedding theorem that we can formulate.  The closure $\ol{\cK(X)}$ of the K\"ahler cone is called the \textit{nef} cone of $X$.  Thanks to Demailly and P\u{a}un, we know that the Poincar\'e dual of $\ol{\cK(X)}$ is the closed convex cone in $H^{n-1,n-1}(X,\bR)$ generated by the classes of closed positive currents of type $(n-1,n-1)$; \cf~\cite[Theorem 2.1]{BDPP}.  The analog of such a cone in $H^{1,1}(X,\bR)$ is the \textit{pseudoeffective cone} $\Psef(X)$, defined as the closed convex cone in $H^{1,1}(X,\bR)$ generated by the classes of closed positive currents of type $(1,1)$. From this point of view, if $\Psef(X)^\vee \subset H^{n-1,n-1}(X,\bR)$ denotes the Poincar\'e dual of $\Psef(X)$ and  $\Int\(\Psef(X)^\vee\)$  its interior in $H^{n-1,n-1}(X,\bR)$, then the following can also be considered as a dual problem to the Kodaira embedding theorem.

\begin{prob}\label{prob-psefdual}
Let $X$ be a compact K\"ahler manifold of dimension $n$ such that $\Int\(\Psef(X)^\vee\)$ contains an element of $H^{2n-2}(X,\bQ)$. Is $X$ always projective? If not, how algebraic is $X$?
\end{prob}

We say that a compact K\"ahler manifold $X$ of dimension $n$ satisfies the \textit{dual Kodaira condition (K)} if $\Int\(\cK(X)^\vee\)$ contains an element of $H^{2n-2}(X,\bQ)$.  The \textit{dual Kodaira condition (P)} is defined similarly with $\Int\(\cK(X)^\vee\)$ replaced by $\Int\(\Psef(X)^\vee\)$.

We can compare the dual Kodaira conditions (K) and (P), together with the condition in the Kodaira embedding theorem, as follows. On the one hand if $\go \in \cK(X)$, then $\go^{n-1} \in \Int\(\Psef(X)^\vee\)$, so condition (P) is weaker than the condition in the Kodaira embedding theorem.  On the other hand since $\cK(X) \subset \Psef(X)$, condition (P) is stronger than condition (K).  It is still unknown whether these conditions are equivalent.

In~\cite{OguisoPeternellconeKahdual}, a similar problem had also been studied by Oguiso and Peternell. 

\begin{prob}[Oguiso--Peternell]\label{prob-OPCampl}
Let $X$ be a compact K\"ahler manifold of dimension $n$ such that $X$ contains a smooth curve with ample normal bundle. How algebraic is $X$?
\end{prob}

The two Oguiso--Peternell problems (Problems~\ref{prob-OP} and~\ref{prob-OPCampl}) could be related by the conjecture that if $C \subset X$ is a smooth curve with ample normal bundle, then $[C] \in \Int\(\cK(X)^\vee\)$; \cf~\cite[Conjecture~0.3]{OguisoPeternellconeKahdual}.  All the problems introduced above aim at understanding the algebraicity of compact K\"ahler manifolds containing some positive rational Hodge class of \textit{bidimension} $(1,1)$.

\ssec{Main results}
%\hfill

In this article, we study the aforementioned problems.  Our first result provides the following partial answer to the Oguiso--Peternell problem (and also to Problem~\ref{prob-psefdual}).

\begin{thm}\label{thm-alb}
Let $X$ be a compact K\"ahler manifold.  If\, $X$ satisfies the dual Kodaira condition (K), then the Albanese torus of $X$ is projective.
\end{thm}

For a compact K\"ahler manifold $X$ as in Problem~\ref{prob-OP}, Theorem~\ref{thm-alb} gives a lower bound of the algebraic dimension $a(X)$ of $X$, namely the dimension of its Albanese image.  In particular, if $X$ has maximal Albanese dimension (namely, if the Albanese map is generically finite onto its image), then $X$ is projective.
 
 \begin{rem}
If we replace condition (K) in Theorem~\ref{thm-alb} with the existence of a smooth curve $C \subset X$ such that $N_{C/X}$ is ample (hence in the context of Problem~\ref{prob-OPCampl}), then the projectivity of $\Alb(X)$ simply follows from the surjectivity of $\Alb(C) \to \Alb(X)$; \cf~\cite[Lemma 12]{Ottem2016}.
 \end{rem}

The essential part of the proof of Theorem~\ref{thm-alb} consists in answering Problem~\ref{prob-OP} for complex tori; see Proposition~\ref{pro-T} for the precise statement.  As a consequence of Proposition~\ref{pro-T} (together with Proposition~\ref{pro-HK} about hyper-K\"ahler manifolds), we also answer Problems~\ref{prob-OP} for Ricci-flat compact K\"ahler manifolds.

\begin{thm}\label{thm-RP} 
Let $X$ be a compact K\"ahler manifold with $c_1(X) = 0 \in H^2(X,\bR)$.  If\, $X$ satisfies the dual Kodaira condition (K), then $X$ is projective.
\end{thm}

For a compact K\"ahler surface $S$, Huybrechts~\cite{HuybCHKbasic, Huybrechts2003} and independently Oguiso--Peternell~\cite{OguisoPeternellconeKahduasurf} proved that if $S$ satisfies the dual Kodaira condition (K), then $S$ is projective. This completely answers Problem~\ref{prob-OP} and Problem~\ref{prob-psefdual} in dimension 2.  In this article, we answer Problem~\ref{prob-psefdual} in dimension 3 except for simple non-Kummer threefolds, which presumably do not exist (see Remark~\ref{rem-simpNK}).

\begin{thm}\label{thm-mainOP}
Let $X$ be a smooth compact K\"ahler threefold.  Assume that $X$ is not a simple non-Kummer threefold.  If\, $X$ satisfies the dual Kodaira condition (P), then $X$ is projective.
\end{thm}

The study of the Oguiso--Peternell problem in the threefold case was initiated by Oguiso and Peternell in~\cite{OguisoPeternellconeKahdual}.  Suppose that $X$ is a compact K\"ahler threefold which is not simple non-Kummer.  They proved that if $\Int\(\cK(X)^\vee\)$ contains an element of $H^{2n-2}(X,\bQ)$ which is a curve class, then $X$ has algebraic dimension $a(X) \ge 2$.  We improve their result by removing the curve class assumption.

\begin{thm}\label{thm-mainOPK}
Let $X$ be a smooth compact K\"ahler threefold.  Assume that $X$ is not a simple non-Kummer threefold.  If\, $X$ satisfies the dual Kodaira condition (K), then $a(X) \ge 2$.
\end{thm}

\begin{rem}\label{rem-simpNK}
In both Theorems~\ref{thm-mainOP} and~\ref{thm-mainOPK}, the existence of simple non-Kummer threefolds would be excluded if the abundance conjecture holds for compact K\"ahler threefolds (see~\cite[Proof of Theorem 6.2]{HorPetsurvey}).  This is proven in a very recent paper by Das and Ou~\cite{das2023log}.
\end{rem}

If we replace the dual Kodaira condition in Theorem~\ref{thm-mainOPK} with the existence of a smooth curve in $X$ with ample normal bundle (thus in the context of Problem~\ref{prob-OPCampl}), then the same conclusion $a(X) \ge 2$ holds; this was proven by Oguiso and Peternell~\cite[Theorem 0.5]{OguisoPeternellconeKahdual}.  Oguiso and Peternell have also outlined a strategy to construct a non-algebraic threefold $X$ such that $\Int\(\cK(X)^\vee\)$ contains a curve class (or such that $X$ contains a smooth curve with ample normal bundle). An explicit construction of such an example is still missing.

While we are still unable to solve Problems~\ref{prob-OP}
and~\ref{prob-OPCampl} in dimension 3, we can relate these problems to
a problem about 1-cycles in threefolds. We postpone the discussion to
Section~\ref{ssec-1cycles}, after we give an outline of the proofs of
Theorems~\ref{thm-mainOP} and~\ref{thm-mainOPK} about
threefolds.

\subsection{Outline of the proofs of Theorems~\ref{thm-mainOP} and~\ref{thm-mainOPK}}
%\hfill

To prove Theorem~\ref{thm-mainOPK}, we have to show that if $X$ is a compact K\"ahler threefold $X$ of algebraic dimension $a(X) \le 1$ which is not simple non-Kummer, then $\Int\(\cK(X)^\vee\) \cap H^{2n-2}(X,\bQ) = \emptyset$.  Essentially based on Fujiki's descriptions of algebraic reductions of compact K\"ahler threefolds~\cite{Fujiki1983}, those threefolds are bimeromorphic to one of the following (see Proposition~\ref{pro-class}):
\begin{enumerate}
\item\label{threef-1} a threefold which dominates a surface, 
\item\label{threef-2} a fibration in abelian varieties over a curve, 
\item\label{threef-3} a finite quotient of a smooth isotrivial torus fibration over a curve without multi-sections,
\item\label{threef-4} a finite quotient of a 3-torus $T$.
\end{enumerate}

According to the bimeromorphic invariance of the emptiness of $\Int\(\cK(X)^\vee\) \cap H^{2n-2}(X,\bQ)$ (see Proposition~\ref{pro-domduK}), it suffices to prove that $\Int\(\cK(X)^\vee\) \cap H^{2n-2}(X,\bQ) = \emptyset$ for the above four types of varieties.  The first case is an immediate consequence of~\cite[Proposition 2.6]{OguisoPeternellconeKahdual}.  The proof in cases~\eqref{threef-2},~\eqref{threef-3},  and~\eqref{threef-3} will be carried out in Sections~\ref{sec-fibtlis}, \ref{sec-fibab}, and \ref{sec-tores}, respectively.

Since $\cK(X) \subset \Psef(X)$, Theorem~\ref{thm-mainOPK} also implies that a compact K\"ahler threefold $X$ as in Theorem~\ref{thm-mainOP} satisfies $a(X) \ge 2$.  Therefore, to prove Theorem~\ref{thm-mainOP} for $X$, it suffices to exclude the possibility that $a(X) = 2$.  Compact K\"ahler threefolds of algebraic dimension 2 are bimeromorphic to elliptic fibrations over a projective surface, and we will prove Theorem~\ref{thm-mainOP} for elliptic threefolds in Section~\ref{sec-fibellip}.

\ssec{A question about 1-cycles and the Oguiso--Peternell problem}\label{ssec-1cycles}
%\hfill
 
We now relate Problem~\ref{prob-OP} to a question about 1-cycles in compact K\"ahler threefolds, which we first formulate.
 
Let $X$ be a compact K\"ahler manifold and $Y \subset X$ a complex subvariety of codimension $l$. Let $\ga \in H^{k,k}(X,\bQ)$ be a Hodge class which vanishes in $H^{2k}(X\bss Y,\bQ)$. Since $H^{2k}(X,\bQ)$ carries a pure Hodge structure, $\ga$ belongs to the image of $\imath_* \colon H^{2k-2l}(\ti{Y},\bQ) \to H^{2k}(X,\bQ)$, where $\imath \colon \ti{Y} \to X$ is the composition of a K\"ahler desingularization $\ti{Y} \to Y$ of $Y$ with $Y \hto X$ (see \eg~Lemma~\ref{lem-excision}).  If we moreover assume that $X$ is projective, then based on the existence of polarization on the underlying $\bQ$-Hodge structure of (the summands of the primitive decomposition of) $H^{2k}(X,\bQ)$, the class $\ga$ is even the image of a Hodge class $\gb \in H^{k-l,k-l}(\ti{Y},\bQ)$; \cf~\cite[Remark 2.30]{Voisinchowdecomp}.  Without the projectivity assumption on $X$, it is yet unknown whether the latter property still holds, even for 1-cycles in threefolds.
 
\begin{que}\label{que-1cycles}
Let $X$ be a smooth compact K\"ahler threefold, and let $Y \subset X$ be a surface $($possibly singular with several irreducible components$)$.  Let $\imath \colon \ti{Y} \to X$ be the composition of a desingularization $\ti{Y} \to Y$ of\, $Y$ with the inclusion $Y \hto X$.  Given a Hodge class $\ga \in H^{2,2}(X,\bQ)$ which vanishes in $H^4(X\bss Y,\bQ)$, does there exist a $\gb \in H^{1,1}(\ti{Y},\bQ)$ such that $\imath_*\gb = \ga$?
\end{que}

So far, Question~\ref{que-1cycles} can be answered in the affirmative if $Y$ is irreducible (see Lemma~\ref{lem-ConjNKah3}).  Under the assumption that Question~\ref{que-1cycles} has a positive answer, we are able to answer Problems~\ref{prob-OP} and~\ref{prob-OPCampl} in dimension $3$ (except for simple non-Kummer threefolds; see Remark~\ref{rem-simpNK}).

\begin{cor}\label{cor-OPpar}
Assume that Question~\ref{que-1cycles} has a positive answer.  Let $X$ be a smooth compact K\"ahler threefold which is not simple non-Kummer.  If\, $X$ satisfies the conditions in Problems~\ref{prob-OP} or~\ref{prob-OPCampl}, then $X$ is projective.
\end{cor}

\subsection{The existence of connecting families of curves and Problem~\ref{prob-psefdual}}
%\hfill 

We finish this introduction by discussing how one could expect Problem~\ref{prob-psefdual} to have a positive answer. Let $X$ be a compact K\"ahler manifold.  The \textit{movable cone} $\cM(X) \subset H^{n-1,n-1}(X,\bR)$ of $X$ is defined as the closed convex cone generated by classes of the form $\mu_*(\go_1 \wedge \cdots \wedge \go_{n-1})$, where $\mu \colon \ti{X} \to X$ is a bimeromorphic morphism from a compact K\"ahler manifold $\ti{X}$ and the $\go_i$ are K\"ahler classes on $\ti{X}$.  Let
$$\cM(X)_{\NS} \cnec \cM(X) \cap \NS(X)_\bR,$$
where $\NS(X)_\bR$ denotes the $\bR$-span of $H^{n-1,n-1}(X,\bQ)$ in $H^{n-1,n-1}(X,\bR)$.  As a consequence of~\cite[Theorem 2.4]{BDPP}, if we assume that $X$ is projective, then $\cM(X)_{\NS}$ coincides with the closed convex cone generated by the classes of \textit{connecting families} of curves $(C_t)_{t \in T}$ (which means that for every general pair of points $x , y \in X$, there exist $t_1,\ldots,t_l \in T$ such that $x,y \in C_{t_1} \cup \cdots \cup C_{t_l}$ and $C_{t_1} \cup \cdots \cup C_{t_l}$ is connected).  One can ask whether this property remains true without the projectivity assumption.

\begin{que}\label{que-intmob}
Let $X$ be a compact K\"ahler manifold.  Does the N\'eron--Severi part $\cM(X)_{\NS}$ of the movable cone coincide with the closed convex cone generated by the classes of a connecting family of curves?
\end{que}

Conjecturally, $\cM(X)$ is the dual cone of $\Psef(X)$; \cf~\cite[Conjecture 2.3]{BDPP}. If we assume this conjecture and that Question~\ref{que-intmob} has a positive answer, then a compact K\"ahler manifold $X$ satisfying $\Int(\Psef(X)^\vee) \cap H^{n-1,n-1}(X,\bQ) \ne \emptyset$ would be algebraically connected (see Section~\ref{ssec-critCampana} for the definition).  By Campana's projectivity criterion (Theorem~\ref{thm-critcamp}), a compact K\"ahler manifold $X$ as in Problem~\ref{prob-psefdual} would then be projective.  Together with the evidence provided by threefolds (Theorem~\ref{thm-mainOP}) and Ricci-flat manifolds (Theorem~\ref{thm-RP}), this leads us to conjecture the following.

\begin{conj}
Let $X$ be a compact K\"ahler manifold of dimension $n$.  If\, $X$ satisfies the dual Kodaira condition (P), then $X$ is projective.
\end{conj}

\ssec{Organization of the article}
%\hfill

In the next section, we recall and prove some preliminary results that we need in this article.  In Section~\ref{sec-psefD}, we prove some results about the invariance of the conditions in Problems~\ref{prob-OP} and~\ref{prob-psefdual} under dominant maps.  In Section~\ref{sec-fibtlis}, we study smooth isotrivial torus fibrations, which will be useful to prove the main theorems.  We then prove Theorem~\ref{thm-alb} in Section~\ref{sec-tores} and Theorem~\ref{thm-RP} in Section~\ref{sec-Rp}.  In Section~\ref{sec-fibab}, we study fibrations in abelian varieties over a curve, similarly to what we do in Section~\ref{sec-fibtlis} for smooth isotrivial torus fibrations.  Section~\ref{sec-fibellip} is devoted to elliptic fibrations.  We will conclude the proofs of Theorems~\ref{thm-mainOP} and~\ref{thm-mainOPK} in Section~\ref{sec-preuve}.  Finally, we prove Corollary~\ref{cor-OPpar} in Section~\ref{sec-1cycles}, which relates the Oguiso--Peternell problems to Question~\ref{que-1cycles}.

\section*{Acknowledgments}

The author is indebted to Junyan Cao for bringing the Oguiso--Peternell problem to his attention, and for numerous discussions directly related to this work.  He also expresses warm thanks to Andreas H\"oring and Keiji Oguiso for stimulating discussions, as well as to the referee for valuable comments and questions.

\section{Preliminaries}\label{sec-prelim}
 
\ssec{Convention and terminology}
%\hfill

In this article, compact complex manifolds and varieties are assumed to be irreducible (but subvarieties can be reducible).  A \textit{fibration} is a surjective proper holomorphic map $f \colon X \to B$ with connected fibers.  We say that a compact complex variety $X$ is in the Fujiki class $\cC$ if $X$ is meromorphically dominated by a compact K\"ahler manifold.

Let $X$ be a compact complex manifold.  The hypercohomology of a bounded complex of sheaves of abelian groups $\cF^\bullet$ on $X$ is denoted by $\bH^\bullet(X,\cF^\bullet)$.  For any subring $R \subset \bC$, we define $H^{k,k}(X,R)$ as the kernel of the composition
$$H^{2k}(X,R) \lra H^{2k}(X,\bC) \simeq \bH^{2k}(X,\gO_X^{\bullet})
 \lra \bH^{2k}\(X,\gO_X^{\bullet \le k-1}\),$$
where $\gO_X^{\bullet}$ is the holomorphic de Rham complex.  Note that we have
$$H^{k,k}(X,\bQ) \simeq H^{k,k}(X,\bZ) \otimes_\bZ \bQ$$
(which does not hold in general if $\bQ$ is replaced by other rings, \eg~$\bR$).  When $X$ is a compact K\"ahler manifold, $H^{k,k}(X,R)$ is the sub-$R$-module of $H^{2k}(X,R)$ consisting of elements whose image in $H^{2k}(X,\bC)$ is of type $(k,k)$ with respect to the Hodge decomposition.  The weight filtration of a mixed Hodge structure $H$ is denoted by $\bW_\bullet H$.

We follow~\cite{DemaillyAM} for the definition and convention of positive cones, and the reader is referred to \textit{op.~cit.}\ for related basic properties.

\ssec{K\"ahler forms on normal complex spaces}
%\hfill

Let $X$ be a normal complex space (for instance, the quotient of a complex manifold by a finite group).  A smooth function on $X$ is a continuous function $f \colon X \to \bR$ such that for some open cover $\{U_i\}$ of $X$, there exist holomorphic embeddings $U_i \hto \bC^N$ such that each $f_{|U_i}$ extends to a smooth function on a neighborhood of $U_i$ in $\bC^N$.  The sheaf of germs of smooth functions on $X$ is denoted by $\cC_X^\infty$. Similarly, a strictly plurisubharmonic (or psh for short) function on $X$ is an upper semi-continuous function with values in $\bR \cup \{-\infty\}$ which extends to a strictly psh function on a neighborhood of a local embedding $X \hto \bC^N$.

Let $\PH_X = \Re \cO_X \subset \cC_X^\infty$ be the subsheaf of pluriharmonic functions (\ie, the real part of $\cO_X$).  A \textit{K\"ahler metric} on $X$ is a collection of smooth strictly psh functions $\Set{\phi_i \colon U_i \to \bR }_{i \in I}$, where $\{U_i\}_{i \in I}$ is an open cover of $X$ such that ${\phi_i}_{|U_i \cap U_j} - {\phi_j}_{|U_i \cap U_j} \in \PH_X(U_i \cap U_j)$.  In particular, a K\"ahler metric on $X$ is an element of $ H^0(X,\cC^\infty_X/ \PH_X)$.  The short exact sequences
\begin{equation}\label{SE-CPH}
%\begin{tikzcd}[cramped]
0  \lra \PH_X  \lra \cC^\infty_X \lra \cC^\infty_X / \PH_X  \lra 0
%\end{tikzcd}
\end{equation}
and
\begin{equation}\label{SE-PH}
%\begin{tikzcd}[column sep = 30]
0  \lra \bR  \xto{\times \sqrt{-1}}  \cO_X \xto{2 \cdot \Re}  \PH_X  \lra 0
%\end{tikzcd}
\end{equation}
give the composition
 $$ [ \bullet ] \colon H^0(X,\cC_X^\infty/ \PH_X) \lra H^1(X, \PH_X) \lra H^2(X,\bR),$$
and a K\"ahler class $[\go] \in H^2(X, \bR)$ is the image of a K\"ahler metric $\go \in H^0(X,\cC^\infty/ \PH_X)$. The K\"ahler classes form a convex cone $\cK(X) \subset H^2(X,\bR)$, and elements in the closure $\ol{\cK(X)}$ are called \textit{nef} classes of $X$.

The following two lemmas are presumably well known; they are direct consequences of Varouchas' work~\cite{VarouchasKahdom, VarouchasKahsp}.

\begin{lem}\label{lem-pullbackKahG}
Let $f \colon X \to X/G$ be the quotient of a compact K\"ahler manifold $X$ by a finite group $G$. A $G$-invariant nef class $[\go] \in H^2(X,\bR)$ is the pullback $f^*[\go']$ of a nef class $[\go'] \in H^2(X/G,\bR)$.
\end{lem}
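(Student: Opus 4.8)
The plan is to identify, via the pullback $f^*$, the nef cone $\ol{\cK(X/G)}$ of the quotient with the cone of $G$-invariant nef classes on $X$. Concretely I would establish two statements: (a) $f^*$ is a linear isomorphism from $H^2(X/G,\bR)$ onto the invariant subspace $H^2(X,\bR)^G$; and (b) $f^*$ carries the K\"ahler cone $\cK(X/G)$ onto $\cK(X) \cap H^2(X,\bR)^G$. The assertion for nef classes then drops out by passing to closures inside the subspace $H^2(X,\bR)^G$.

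For (a): since $g \circ f = f$ for every $g \in G$, the image of $f^*$ is contained in $H^2(X,\bR)^G$; that $f^*$ is in fact an isomorphism onto $H^2(X,\bR)^G$ is the classical description of the real cohomology of a finite quotient (the $G$-averaging operator $\frac{1}{|G|}\sum_{g \in G} g^*$ being the projector onto the invariants). Being a linear isomorphism between finite-dimensional spaces, $f^*$ is in particular a homeomorphism onto $H^2(X,\bR)^G$.

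For (b), the inclusion $f^*\cK(X/G) \subseteq \cK(X) \cap H^2(X,\bR)^G$ is the easy direction: the pullback along the finite holomorphic map $f$ of a K\"ahler metric $\{\phi_i\}$ on $X/G$ is again a K\"ahler metric, since each $f^*\phi_i$ is smooth and strictly psh and the transition functions $f^*(\phi_i - \phi_j)$ remain pluriharmonic. Moreover $f^{-1}$ sends the exact sequences \eqref{SE-CPH} and \eqref{SE-PH} on $X/G$ to those on $X$, so the class map $[\,\bullet\,]$ commutes with pullback; hence $f^*[\go'] = [f^*\go']$, which is visibly $G$-invariant. The reverse inclusion is the crux. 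Starting from $[\go] \in \cK(X) \cap H^2(X,\bR)^G$, I would first represent it by a $G$-invariant K\"ahler metric: averaging a representing metric over $G$ on a $G$-stable cover produces invariant strictly psh potentials with invariant pluriharmonic transition functions, and does not change the class precisely because $[\go]$ is already invariant. A $G$-invariant smooth strictly psh potential descends to a smooth strictly psh function on the normal space $X/G$, and the invariant pluriharmonic transitions descend to pluriharmonic functions; this yields a K\"ahler metric on $X/G$ whose $f$-pullback is the given invariant metric, so its class $[\go']$ satisfies $f^*[\go'] = [\go]$.

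Finally I would pass to closures. Since $\cK(X)$ is an open convex cone and $H^2(X,\bR)^G$ is a linear subspace meeting it (any K\"ahler class averages into it), a standard convexity argument gives $\ol{\cK(X) \cap H^2(X,\bR)^G} = \ol{\cK(X)} \cap H^2(X,\bR)^G$. Because $f^*$ is a homeomorphism onto $H^2(X,\bR)^G$, it maps $\ol{\cK(X/G)}$ onto the closure of $\cK(X) \cap H^2(X,\bR)^G$, that is, onto $\ol{\cK(X)} \cap H^2(X,\bR)^G$, which is exactly the set of $G$-invariant nef classes. Thus a $G$-invariant nef class $[\go]$ equals $f^*[\go']$ for a nef class $[\go'] \in H^2(X/G,\bR)$, as claimed. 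The step I expect to be the main obstacle is the descent in (b) of an invariant K\"ahler metric to a genuine K\"ahler metric on the normal complex space $X/G$, which is the only truly analytic input and is handled within the singular-K\"ahler formalism of \cite[II.1]{VarouchasKahsp}; the cohomological identification in (a) and the closure argument are formal.
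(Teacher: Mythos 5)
Your overall architecture is sound and parallels the paper's: the paper reduces the nef statement to the K\"ahler statement via the injectivity of $f^*$, which is essentially your step (a) combined with your closure/convexity argument, and the heart of the matter is indeed the descent of an invariant K\"ahler class. But your descent step contains a genuine gap: it is \emph{not} true that a $G$-invariant smooth strictly psh potential descends to a \emph{smooth} strictly psh function on $X/G$. Smoothness is destroyed at the ramification of the quotient map, even when $X/G$ happens to be a smooth complex space. For instance, let $G = \bZ/2$ act on a disc in $\bC$ by $z \mapsto -z$, so that the quotient is a disc with coordinate $w = z^2$; the invariant potential $u(z) = |z|^2$ descends to $v(w) = |w|$, which is strictly psh but not smooth at the origin. (More conceptually, by the Glaeser--Schwarz description of smooth invariants, a $G$-invariant smooth function is a smooth function of the \emph{real} polynomial invariants, not of the holomorphic invariants that give the local embeddings of $X/G$, and this is exactly where smoothness is lost.) Since a K\"ahler metric on the normal space $X/G$ is by definition a collection of \emph{smooth} strictly psh potentials with pluriharmonic transitions, your descended collection is not yet a K\"ahler metric, and the cocycle argument producing $[\go']$ does not go through as written. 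This is precisely why the paper's proof has an extra step: the pushforwards $v_i = f_*u_i$ are strictly psh and the transitions $v_{ij}$ pluriharmonic by \cite[Lemma II.3.1.2]{VarouchasKahsp}, and then \cite[Theorem 1]{VarouchasKahsp} is invoked to replace the $v_i$ by smooth strictly psh functions $v_i'$ with the \emph{same} transitions $v_{ij}$; only then does the cocycle $\frac{1}{|G|}\{v_{ij}\}$ define a K\"ahler class $[\go']$ on $X/G$ with $f^*[\go'] = [\go]$. Your proof is repaired by inserting exactly this regularization step.

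A second, more minor inaccuracy: your ``easy direction'' $f^*\cK(X/G) \subseteq \cK(X)$ is also false at the level of metrics, for the dual reason: the pullback of a strictly psh function under the ramified map $f$ need not be strictly psh (with $w = z^2$ as above, $|w|^2$ pulls back to $|z|^4$, whose Levi form vanishes at $0$). Fortunately this inclusion is never needed for the lemma: it suffices to know that invariant K\"ahler classes on $X$ are pullbacks of K\"ahler classes on $X/G$, after which your convexity identity $\ol{\cK(X) \cap H^2(X,\bR)^G} = \ol{\cK(X)} \cap H^2(X,\bR)^G$, together with the fact that the injective linear map $f^*$ is a homeomorphism onto its closed image, yields the statement for nef classes --- which is the same reduction the paper performs.
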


\begin{proof}
Since $f^* \colon H^2(X/G,\bR) \to H^2(X,\bR)$ is injective (because $f$ is finite) and nef classes are limits of K\"ahler classes, it suffices to prove that a $G$-invariant K\"ahler class $[\go] \in H^2(X,\bR)$ is the pullback $f^*[\go']$ of a K\"ahler class $[\go'] \in H^2(X/G,\bR)$.  Let $\go$ be a $G$-invariant K\"ahler form representing $[\go]$.  Then we can find an open cover $\{V_i\}$ of $X/G$ and $\cC^\infty$-strictly psh functions $u_i$ defined over $U_i \cnec f^{-1}(V_i)$ such that $\go_{|U_i} = i \dr \drbar u_i$.  By averaging over the $G$-orbits, we can assume that the functions $u_i$ are $G$-invariant.  By~\cite[Lemma II.3.1.2]{VarouchasKahsp}, the pushforwards $v_i = f_*u_i$ are still strictly psh,  and the $v_{ij} = (v_i - v_j)_{|V_i \cap V_j}$ are pluriharmonic. By~\cite[Theorem 1]{VarouchasKahsp},\footnote{Note that since $X/G$ is reduced, condition (ii) in~\cite[Theorem 1]{VarouchasKahsp} holds automatically; see~\cite[Remark II.2.2]{VarouchasKahsp}.} there exist $\cC^\infty$-strictly psh functions $v'_i$ defined over $V_i$ such that $v'_i - v'_j = v_{ij}$.  Therefore, the \v{C}ech 1-cocycle $\frac{1}{|G|}\{v_{ij}\}$ with coefficients in $\PH_{X/G}$ maps to a K\"ahler class $[\go'] \in H^2(X/G,\bR)$ under the connecting morphism of~\eqref{SE-PH}.  Since $\frac{1}{|G|}f^*v_{ij} = (u_i - u_j)_{|U_i \cap U_j}$, we have $f^*[\go'] = [\go]$.
\end{proof}

\begin{lem}\label{lem-pousKah}
Let $f \colon X \to Y$ be a finite morphism between compact complex manifolds.  If\, $[\go] \in H^2(X,\bR)$ is a K\"ahler class, then $f_*[\go] \in H^2(Y,\bR)$ is also a K\"ahler class.
\end{lem}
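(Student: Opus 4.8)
The plan is to exhibit an explicit Kähler form on $Y$ representing $f_*[\go]$, obtained by transferring a Kähler form along the unramified covering $f$. Write $n = \dim_\bC X = \dim_\bC Y$ (these agree since $f$ is étale) and fix a Kähler form $\go$ representing the class $[\go]$. Because $f$ is finite and étale it is an unramified covering: every point $y \in Y$ has a connected open neighborhood $V$ with $f^{-1}(V) = \bigsqcup_{i=1}^{d} U_i$, where $d = \deg f$, and each $f_{|U_i} : U_i \to V$ is a biholomorphism with holomorphic inverse $s_i : V \to U_i$. Over such a $V$ I would set
$$(f_*\go)_{|V} \cnec \sum_{i=1}^{d} s_i^*\go .$$
First I would check that these local recipes glue to a globally defined smooth real form $f_*\go$ on $Y$: the right-hand side is nothing but the sum, over all sheets of the covering lying above a given point, of the pullbacks of $\go$ by the local inverses of $f$, so it does not depend on the chosen trivialization, and it is smooth because $f$ is a local biholomorphism.

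Next I would record the positivity of $f_*\go$. Each $s_i$ is holomorphic, so every $s_i^*\go$ is a closed positive form of type $(1,1)$; being a finite sum of such, $f_*\go$ is again closed, of type $(1,1)$, and strictly positive. Thus $f_*\go$ is a Kähler form on $Y$, and in particular $[f_*\go] \in \cK(Y)$.

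It then remains to identify the de Rham class $[f_*\go]$ with the Gysin pushforward $f_*[\go]$. I would do this through the adjunction characterising $f_*$, namely
$$\int_Y f_*[\go] \wedge \gamma = \int_X [\go] \wedge f^*\gamma \quad \text{for all } \gamma \in H^{2n-2}(Y,\bR),$$
which is the projection formula of Proposition~\ref{pro-formproj}. Computing the right-hand side sheet by sheet over a trivializing neighborhood $V$ and using $f \circ s_i = \Id_V$, one finds
$$\int_{f^{-1}(V)} \go \wedge f^*\gamma = \sum_{i=1}^{d} \int_V s_i^*\go \wedge \gamma = \int_V (f_*\go) \wedge \gamma ,$$
and summing over a partition of unity on $Y$ gives $\int_X \go \wedge f^*\gamma = \int_Y (f_*\go) \wedge \gamma$. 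Hence $[f_*\go] = f_*[\go]$ in $H^2(Y,\bR)$, and since $f_*\go$ is Kähler we conclude that $f_*[\go]$ is a Kähler class.

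The genuinely delicate point is precisely this last identification of the form-level transfer $\sum_i s_i^*\go$ with the cohomological Gysin class $f_*[\go]$; the gluing, smoothness and positivity are all local and follow immediately from $f$ being a local biholomorphism. I expect the verification via the projection formula to be the main thing to state carefully. As an alternative one could first reduce to the Galois situation $Y = X/G$ with $G$ acting freely, average $\go$ into a $G$-invariant Kähler form and descend it along the free quotient, but the direct transfer above sidesteps the Galois-closure construction.
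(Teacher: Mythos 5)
Your proof is correct, and it constructs the same object as the paper: both represent $f_*[\go]$ by the fiberwise trace of a K\"ahler form $\go$. The difference lies in how the two key steps are treated. The paper works with local potentials: it writes $\go = \sqrt{-1}\ddbar\phi_i$ on $f^{-1}(U_i)$ with $\phi_i$ smooth strictly psh, sets $\psi_i(y) = \sum_{x \in f^{-1}(y)}\phi_i(x)$, and invokes Varouchas's lemma~\cite[Lemma II.3.1.2]{VarouchasKahsp} (pushforward of a smooth strictly psh function under a finite surjective morphism is smooth strictly psh) to conclude that the form with local potentials $\psi_i$ is K\"ahler; since $\sqrt{-1}\ddbar\psi_i = \sum_j s_j^*\(\sqrt{-1}\ddbar\phi_i\)$, that form is exactly your $\sum_j s_j^*\go$. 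Your positivity argument is more elementary and is the natural one in the \'etale case: each local sheet $s_j$ is a biholomorphism, so each $s_j^*\go$ is already K\"ahler and no psh machinery is needed (the paper presumably recycles Varouchas because that lemma is indispensable in Lemma~\ref{lem-pullbackKahG}, where the finite map is a genuinely ramified quotient). Conversely, you make explicit a step the paper silently takes for granted, namely that the trace form represents the Gysin class $f_*[\go]$. Your verification is fine, with one small caveat: Proposition~\ref{pro-formproj} by itself gives $f_*\([\go] \cdot f^*\gamma\) = f_*[\go]\cdot\gamma$ in cohomology; to pass to your integral identity you also need $\int_Y f_*\eta = \int_X \eta$ for top-degree classes $\eta$, which is immediate from the definition of the Gysin morphism through Poincar\'e duality (it sends the point class to the point class). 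With that remark, your sheet-by-sheet computation completes the identification and the lemma follows.
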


\begin{proof}
Let $\go$ be a K\"ahler form which represents $[\go]$.  For every $y \in Y$ and every $x \in f^{-1}(y)$, there exists a neighborhood $U(x) \subset X$ of $x$ such that $\go_{| U(x)}$ is $\ddbar$-exact.  Up to shrinking $U(x)$, we can assume that $U(x) \cap U(x') = \emptyset$ whenever $x \ne x' \in f^{-1}(y)$. Since $f$ is finite, $f$ is open, so $U(y) \cnec \bigcap_{x \in f^{-1}(y)} f(U(x))$ is a neighborhood of $y \in Y$.  We form an open cover $\{U_i\}_{i \in I}$ of $Y$ among these neighborhoods.  For each $i \in I$, there exists by construction a smooth strictly psh function $\phi_i \colon f^{-1}(U_i) \to \bR$ such that $\go_{|f^{-1}(U_i)} = \sqrt{-1} \ddbar\phi_i$.  The functions $\psi_i \colon U \to \bR$ defined by $\psi_i(y) \cnec \sum_{x \in f^{-1}(y)} \phi_i(x)$ (here we regard $f^{-1}(y)$ as a multiset prescribed by its scheme structure) are strictly psh, see~\cite[Lemma II.3.1.2]{VarouchasKahsp}, and the functions $\psi_{ij} = {\psi_i}_{|U_i \cap U_j} - {\psi_j}_{|U_i \cap U_j}$ are pluriharmonic.  The pushforward $f_*[\go] \in H^2(Y,\bR)$ is thus represented by the image of the \v{C}ech 1-cocycle $\psi_{ij}$ under the connecting morphism induced by~\eqref{SE-PH}.  Again by~\cite[Theorem 1]{VarouchasKahsp}, there exist $\cC^\infty$-strictly psh functions $\psi'_i$ defined over $U_i$ such that $\psi'_i - \psi'_j = \psi_{ij}$, so the $\sqrt{-1} \ddbar\psi'_i$ glue to a K\"ahler form on $Y$ which represents $f_*[\go]$.
\end{proof}

\ssec{Gysin morphisms and projection formula for varieties with quotient singularities}
%\hfill

Let $f \colon X \to Y$ be a proper continuous map between two closed rational homology manifolds\footnote{A closed rational homology manifold of dimension $n$ is a compact topological space $X$ such that for every $x \in X$, we have $H_i(X,X \bss \{x\},\bQ) \simeq \bQ$ if $i = n$ and $H_i(X,X \bss \{x\},\bQ) = 0$ if $i \ne n$.} (\eg, complex varieties with at worst quotient singularities; \cf~\cite[Proposition A.1(iii)]{BrionRatsm}).  Then the Poincar\'e duality holds for $X$ and $Y$ (see the proof of~\cite[Section~V.3, Poincar\'e duality~3.2]{IversenBook}), which allows us to define the Gysin morphism
$$f_* \colon H^k(X,\bZ) \xto{\,\PD} H^{\BM}_{\dim X-k}(X,\bZ) \xto{\;f_*\,} H^{\BM}_{\dim X-k}(Y,\bZ) \xto{\,\PD} H^{k-r}(Y,\bZ),$$
where $r = \dim X - \dim Y$ and $\PD$ denotes the Poincar\'e duality between the cohomology groups and the Borel--Moore homology groups $H^{\BM}$.  The following is the reformulation of the projection formula (see \eg~\cite[IX.3.7]{IversenBook}) in terms of Gysin morphism.

\begin{pro}[Projection formula]\label{pro-formproj}
Given $\ga \in H^k(X,\bQ)$ and $\gb \in H^l(Y,\bQ)$, we have
$$f_*(\ga \cdot f^*\gb) = f_*\ga \cdot \gb.$$
\end{pro}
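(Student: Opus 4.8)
The plan is to reduce the projection formula for Gysin morphisms to the classical projection formula for Borel–Moore homology, unwinding the definition $f_* = \PD \circ f_* \circ \PD$ given just above the statement. The classical projection formula, in the form cited from~\cite[IX.3.7]{IversenBook}, relates the cap product in (co)homology to pushforward and pullback; so the essential content is purely formal, and the work is in translating between the cup-product formulation on cohomology and the cap-product formulation on Borel–Moore homology under Poincar\'e duality.

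First I would fix the Poincar\'e duality isomorphisms. Since $X$ and $Y$ are closed rational homology manifolds of dimensions $d_X = \dim X$ and $d_Y = \dim Y$, capping with the fundamental class gives isomorphisms $\PD_X : H^k(X,\bQ) \to H^{\BM}_{d_X - k}(X,\bQ)$ and similarly for $Y$, and under these the cup product corresponds to the cap product with the fundamental class. Concretely, for $\ga \in H^k(X,\bQ)$ one has $\PD_X(\ga) = \ga \frown [X]$, where $[X] \in H^{\BM}_{d_X}(X,\bQ)$ is the fundamental class. I would then rewrite both sides of the desired identity $f_*(\ga \cdot f^*\gb) = f_*\ga \cdot \gb$ by applying $\PD_Y$ and tracking through the three arrows defining $f_*$ on cohomology.

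The key computation is the interplay of three standard naturality facts: (1) the classical projection formula $f_*\bigl( x \frown f^*\gb \bigr) = \bigl( f_* x \bigr) \frown \gb$ for $x \in H^{\BM}_\bullet(X,\bQ)$ and $\gb \in H^\bullet(Y,\bQ)$, which is exactly~\cite[IX.3.7]{IversenBook}; (2) the compatibility $\PD_X(\ga \cdot \ga') = \ga' \frown \PD_X(\ga)$ relating cup to cap product; and (3) the fact that $f$, being a proper map between rational homology manifolds with $r = d_X - d_Y$, sends the fundamental class compatibly, so $f_*[X]$ and $[Y]$ are related in the way needed for the degree bookkeeping. Setting $x = \PD_X(\ga) = \ga \frown [X]$, the left-hand side becomes $\PD_Y^{-1}\bigl( f_*( (\ga \cdot f^*\gb) \frown [X]) \bigr)$, and using $(\ga \cdot f^*\gb)\frown [X] = f^*\gb \frown (\ga \frown [X]) = f^*\gb \frown x$ together with (1) turns this into $\PD_Y^{-1}\bigl( \gb \frown f_* x \bigr)$, which by (2) is precisely $f_*\ga \cdot \gb$.

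The main obstacle I anticipate is not any single deep step but the careful management of conventions: the direction of the cap product (is it $H^k \otimes H^{\BM}_m \to H^{\BM}_{m-k}$ acting on the left or the right?), the precise degree shifts so that everything lands in the correct $H^{k-r}(Y,\bQ)$, and the sign and module-structure conventions for the cap product versus cup product used in~\cite{IversenBook}. In particular, I would need to check that the Poincar\'e duality isomorphism is genuinely available over $\bQ$ for these rational homology manifolds (which is why the statement is over $\bQ$ rather than $\bZ$), and that the cited projection formula applies to the Borel–Moore theory in the generality of proper, merely continuous $f$. Once the conventions are pinned down, the proof is a short diagram chase with no geometric input beyond Poincar\'e duality and the classical projection formula, so I would keep the write-up brief and refer to~\cite{IversenBook} for the underlying identities.
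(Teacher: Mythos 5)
Your proposal is correct and is essentially the paper's own approach: the paper gives no proof at all, simply stating the proposition as a reformulation of the classical projection formula \cite[IX.3.7]{IversenBook} under Poincar\'e duality, which is exactly the diagram chase you carry out. Your attention to sign conventions is well placed but harmless here, since the ambient spaces are complex varieties (so $r = \dim X - \dim Y$ is even in real dimensions) and the paper only ever applies the formula to even-degree classes.
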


\ssec{An isomorphism statement about Gysin morphisms}
%\hfill

\begin{lem}\label{lem-isomGysin}
Let $f \colon X \to U$ be a surjective proper morphism between complex manifolds with equidimensional connected fibers.  Let $n = \dim X$ and $m =\dim U$.  Assume that $U$ is affine. Then the Gysin morphism
$$f_* \colon H^{2n - m}(X,\bC) \lra H^{m}(U,\bC)$$
is an isomorphism. 
\end{lem}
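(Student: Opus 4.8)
The plan is to compute $f_*$ sheaf-theoretically and then trap the answer between two extreme dimension bounds, one coming from the $d$-dimensional fibers ($d \cnec n-m$) and one from the affine base. The fibers of $f$ are compact, since $f$ is proper, and by hypothesis connected of pure dimension $d$. Because $X$ and $U$ are smooth one has $f^{!}\bC_U \cong \bC_X[2d]$, and, unwinding the Poincar\'e-duality definition of the Gysin map via Verdier duality, $f_*$ is the map induced on hypercohomology by the trace morphism
$$t \colon Rf_*\bC_X \cong Rf_* f^{!}\bC_U[-2d] \longrightarrow \bC_U[-2d]$$
attached to the counit $Rf_* f^{!} \to \mathrm{id}$. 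Applying $\bH^{2n-m}(U,-)$ to $t$ indeed recovers
$$f_* \colon H^{2n-m}(X,\bC) = \bH^{2n-m}(U, Rf_*\bC_X) \longrightarrow \bH^{2n-m}(U, \bC_U[-2d]) = H^m(U,\bC),$$
so it is enough to prove that $\bH^{2n-m}(U,t)$ is an isomorphism.

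First I would record two vanishing statements. By proper base change $(R^q f_*\bC)_u \cong H^q(X_u,\bC)$ vanishes for $q>2d$, so $Rf_*\bC_X$ has cohomology sheaves only in degrees $0 \le q \le 2d$; and since $U$ is affine of dimension $m$, Artin vanishing bounds its cohomological dimension for constructible coefficients by $m$. Feeding both into the hypercohomology spectral sequence $H^p(U,R^q f_*\bC) \Rightarrow \bH^{p+q}(U, Rf_*\bC_X)$ applied to the canonical truncation triangle
$$\tau_{<2d} Rf_*\bC_X \longrightarrow Rf_*\bC_X \longrightarrow R^{2d}f_*\bC\,[-2d] \longrightarrow \tau_{<2d}Rf_*\bC_X[1],$$
the left term contributes to $\bH^j$ only for $j \le m+(2d-1) = 2n-m-1$. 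Hence the truncation map is an isomorphism
$$\bH^{2n-m}(U, Rf_*\bC_X) \xrightarrow{\ \sim\ } H^m(U, R^{2d}f_*\bC),$$
and since $t$ factors through $R^{2d}f_*\bC[-2d]$ via the fibrewise trace $\tr \colon R^{2d}f_*\bC \to \bC_U$, I am reduced to showing that $H^m(U,\tr)$ is an isomorphism.

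Then I would analyse $\tr$ directly. On stalks $\tr_u \colon H^{2d}(X_u,\bC) \to \bC$ is the degree map, which is surjective because the compact pure-dimensional fiber $X_u$ has a nonzero fundamental class; thus $\tr$ is surjective. Over the locus $U^{\circ}$ where the fiber is irreducible one has $H^{2d}(X_u,\bC) \cong \bC$ and $\tr_u$ is an isomorphism, so $\cK \cnec \ker(\tr)$ is supported on $Z \cnec U \setminus U^{\circ}$. Equidimensionality of the fibers together with smoothness of $X$ and $U$ makes $f$ flat (miracle flatness), so $U^{\circ}$ is open; by generic smoothness it is also dense, since a smooth connected fiber is irreducible, whence $Z$ is a closed analytic subset of dimension $<m$. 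Being closed in the affine space $U$ it is Stein, so $H^j(U,\cK) = H^j(Z,\cK|_Z) = 0$ for $j \ge m$. The long exact sequence of $0 \to \cK \to R^{2d}f_*\bC \xrightarrow{\tr} \bC_U \to 0$ then shows that $H^m(U,\tr)$ is an isomorphism, and composing the two isomorphisms finishes the proof.

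The genuinely non-formal points, which I expect to be the main obstacle, are the two identifications that turn the argument from a formal diagram chase into a proof: that the Poincar\'e-duality Gysin map of the statement really is induced by the trace $t$ (the Verdier-dual description of proper pushforward, using $f^{!}\bC_U \cong \bC_X[2d]$), and the behaviour of the top direct image $R^{2d}f_*\bC$ over the non-irreducible fibers. The flatness forced by equidimensionality is what guarantees that $Z$ is a bona fide closed analytic subset of dimension $<m$, so that the support-and-dimension estimate can annihilate, in degree exactly $m$, the discrepancy between $R^{2d}f_*\bC$ and $\bC_U$. Everything else is bookkeeping with the fiber bound $2d$ and the affine-base bound $m$.
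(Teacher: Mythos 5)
Your proof is correct in substance and shares the paper's skeleton: isolate the top direct image $R^{2d}f_*\bC$ using the fiber-dimension bound together with Artin vanishing on the affine base, then compare $R^{2d}f_*\bC$ with $\bC_U$ via a sheaf map whose kernel lives on a proper closed subset, and kill the cohomology of that kernel in degrees $m$ and $m+1$. Within this skeleton you execute three steps differently, and the comparison is instructive. First, you realize the Gysin map as the Verdier-duality trace $Rf_*\bC_X \cong Rf_!f^!\bC_U[-2d] \to \bC_U[-2d]$, so that its compatibility with the sheaf-level comparison map is definitional; the paper instead uses the adjunction map $\Phi : R^{2d}f_*\bC \to j_*j^*(R^{2d}f_*\bC) \simeq \bC_U$ (with $j$ the inclusion of the locus of smooth fibers) and merely asserts that the map it induces on $H^m$ agrees with $f_*$ --- your route supplies the justification for what is arguably the least documented step of the paper's argument. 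Second, you prove surjectivity of the comparison map stalkwise, by pairing with the fundamental cycle of the compact pure-dimensional fiber, whereas the paper deduces it from the existence of local multi-sections granted by equidimensionality; both are valid. Third, to kill the kernel's cohomology you restrict to its support, while the paper stays on $U$ and combines the perverse bound $K[\dim U -1] \in {}^p D^{\le 0}(U)$ with Artin vanishing there.

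Two points in your last step need repair, though neither is fatal. The phrase ``being closed in the affine space $U$ it is Stein, so $H^j$ vanishes for $j \ge m$'' reads like an appeal to Cartan's Theorem B, which does not apply: $\cK$ is not coherent, and Stein-ness alone gives no such vanishing for arbitrary sheaves of $\bC$-vector spaces. What you need is the Artin--Hamm vanishing theorem for \emph{analytically constructible} sheaves on a Stein space of dimension $\le m-1$ (the same circle of results in Dimca's book that the paper cites); to invoke it you should note that $\cK$ is constructible, being the kernel of a map of constructible sheaves ($R^{2d}f_*\bC$ is analytically constructible by Verdier's theorem because $f$ is proper). Second, openness of the irreducible-fiber locus $U^\circ$ does not follow from flatness (an irreducible non-reduced special fiber can have reducible nearby fibers, as for a double line degenerating to two distinct lines), so $Z = U \bss U^\circ$ need not be closed and the identification $H^j(U,\cK) = H^j(Z,\cK|_Z)$ is ill-posed as written. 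Fortunately you never need $U^\circ$ itself: since a smooth connected compact fiber is irreducible, $U_{\sm} \subseteq U^\circ$, hence $\cK$ vanishes on $U_{\sm}$ and is supported in the closed analytic subset $W \cnec U \bss U_{\sm}$, which has dimension $< m$ by generic smoothness; running your support-and-dimension argument on $W$ instead of $Z$ completes the proof.
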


\begin{proof}
Let $d \cnec n - m$.  Let $E_r^{p,q}$ be the Leray spectral sequence computing $H^\bullet(X,\bC)$ through $f$.  Since every fiber of $f$ has dimension $d$, we have $R^qf_*\bC = 0$ for every $q > 2d$.  As $U$ is affine, by Artin vanishing, see~\cite[Theorem 4.1.26]{Dimca2004}, $E_2^{p,q} \simeq H^p(U,R^qf_*\bC) = 0$ for every $p > m$.  This implies that the only non-vanishing $E_2^{p,q}$ with $p + q = 2n - m$ is $H^m(U,R^{2d}f_*\bC)$, and $E_2^{m,2d} = E_3^{m,2d} = \cdots = E_\infty^{m,2d}$.  So
$$H^{2n-m}(X,\bC) \simeq H^m(U,R^{2d}f_*\bC).$$
	
Let $j \colon U' \hto U$ be the inclusion of a non-empty Zariski open over which $f$ is smooth.  Since $f$ has connected fibers of dimension $d$, $(R^{2d}f_*\bC)_{|U'}$ is isomorphic to the constant sheaf $\bC$ over $U'$ by Poincar\'e duality. So $j_*j^*(R^{2d}f_*\bC) \simeq \bC$, and the natural morphism
$$\Phi \colon R^{2d}f_*\bC \lra j_*j^*(R^{2d}f_*\bC) \simeq \bC$$
has kernel $K \cnec \ker \Phi$ supported on $U \bss U'$.  As $f$ has equidimensional fibers, $f$ has local multi-sections around every point of $U$, which implies that $\Phi$ is surjective.  Since
$$H^m(U,R^{2d}f_*\bC) \simeq H^{2n-m}(X,\bC) \xto{\;f_*\,} H^m(U,\bC)$$
is isomorphic to the morphism induced by $\Phi$, it suffices to prove that $H^i(U,K) = 0$ for $i = m$ and $i = m+1$.  Since $\dim \supp K \le \dim U - 1$, by~\cite[Proposition 5.1.16]{Dimca2004} we have $K[\dim U - 1] \in {}^p D^{\le 0}(U)$, where $p$ is the middle perversity.  It follows from Artin vanishing, see~\cite[Corollary 5.2.18]{Dimca2004}, that $H^i(U,K) = 0$ for every $i \ge \dim U = m$.
\end{proof}

\begin{rem}
In Lemma~\ref{lem-isomGysin}, we need to assume that $f$ is equidimensional.  As a counterexample, let $C$ be any smooth projective curve, and consider the blow-up $\wt{C \times \bC^2}$ of $C \times \bC^2$ at one point.  Then $\wt{C \times \bC^2} \to \bC^2$ does not satisfy the conclusion of Lemma~\ref{lem-isomGysin}.
\end{rem}

\ssec{Maps between cohomology spaces and Hodge classes}
%\hfill

We collect some well-known results about maps between cohomology spaces and include the proofs for completeness.

\begin{lem}\label{lem-fpods}
Let $f\colon X \to Y$ be a surjective morphism between compact complex varieties in the Fujiki class $\cC$.  Assume that $X$ is smooth.  For every integer $k$, we have
$$\ker\(f^* \colon H^k(Y,\bQ) \lra H^k(X,\bQ)\) = \bW_{k-1}H^k(Y,\bQ)$$
and, dually,
$$\Ima\(f_* \colon H_k(X,\bQ) \lra H_k(Y,\bQ)\) = \bW_{-k}H_k(Y,\bQ).$$
\end{lem}

\begin{proof}
Since $H^k(X,\bQ)$ is a pure Hodge structure of weight $k$, the first statement follows from~\cite[Corollary 5.43]{PetersSteenbrinkMHS}.\footnote{The existence of K\"ahler desingularizations of varieties in the Fujiki class $\cC$ allows one to extend Deligne's mixed Hodge theory on complex algebraic varieties to Zariski open subvarieties of compact complex varieties in the Fujiki class $\cC$; \cf~\cite[Section~1]{FujikiDual}. The cited result in~\cite[Chapter 5]{PetersSteenbrinkMHS} proven for complex algebraic varieties generalizes to this larger context.}  Taking the dual, we have
$$\coker\(f_* \colon H_k(X,\bQ) \lra H_k(Y,\bQ)\) = H_k(Y,\bQ)/\bW_{-k}H_k(Y,\bQ),$$
which proves the second statement.
\end{proof}

\begin{lem}\label{lem-excision}
Let $X$ be a compact K\"ahler manifold of dimension $n$, and let $Y \subset X$ be an irreducible closed subvariety of\, $X$.  Let $\ti{\imath} \colon \ti{Y} \xto{\nu} Y \hto X$ be the composition of a K\"ahler desingularization of\, $Y$ with the inclusion $\imath \colon Y \hto X$.  For every $k$, we have
	$$\Ima \(\ti{\imath}_* \colon H_{2n-k}(\ti{Y},\bQ) \lra H^{k}(X,\bQ)\) = \ker\(H^{k}(X,\bQ) \lra H^{k}(X \bss Y,\bQ)\).$$
\end{lem}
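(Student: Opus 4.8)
The plan is to identify both the kernel on the right and the image on the left with a single image inside $H^k(X,\bQ)$ coming from the homology of $Y$, and then to pass from the (possibly singular) subvariety $Y$ to its smooth model $\ti{Y}$ by a weight argument built on Lemma~\ref{lem-fpods}. First I would treat the right-hand side by the long exact sequence of the pair together with duality. Write $U \cnec X \bss Y$ for the open complement. Since $X$ is a compact oriented manifold of real dimension $2n$ and $U$ is an oriented (non-compact) manifold of the same dimension, Poincar\'e--Lefschetz duality identifies the long exact sequence of Borel--Moore homology of the closed subset $Y$,
$$\cdots \to H^{\BM}_{2n-k}(Y,\bQ) \xto{\imath_*} H^{\BM}_{2n-k}(X,\bQ) \to H^{\BM}_{2n-k}(U,\bQ) \to \cdots,$$
with the long exact sequence attached to the restriction $H^k(X,\bQ) \to H^k(U,\bQ)$. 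Because $Y$ is compact we have $H^{\BM}_{2n-k}(Y,\bQ) = H_{2n-k}(Y,\bQ)$, and under these identifications the connecting map $\imath_*$ becomes the Gysin pushforward $H_{2n-k}(Y,\bQ) \to H^k(X,\bQ)$ (homological pushforward followed by Poincar\'e duality on $X$). Exactness then gives
$$\Ker\(H^k(X,\bQ) \to H^k(X \bss Y,\bQ)\) = \Ima\(\imath_* : H_{2n-k}(Y,\bQ) \to H^k(X,\bQ)\).$$

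Next I would compare the two subvarieties. Since $\ti{\imath} = \imath \circ \nu$, we have $\ti{\imath}_* = \imath_* \circ \nu_*$, hence $\Ima(\ti{\imath}_*) = \imath_*\(\Ima(\nu_*)\)$, and it remains to show $\imath_*\(\Ima(\nu_*)\) = \imath_*\(H_{2n-k}(Y,\bQ)\)$. Applying Lemma~\ref{lem-fpods} to the proper surjective map $\nu : \ti{Y} \to Y$ from the compact K\"ahler manifold $\ti{Y}$ identifies the image of $\nu_*$ with the lowest-weight piece,
$$\Ima\(\nu_* : H_{2n-k}(\ti{Y},\bQ) \to H_{2n-k}(Y,\bQ)\) = \bW_{-(2n-k)}H_{2n-k}(Y,\bQ).$$
Now I would invoke strictness of morphisms of mixed Hodge structures. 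The homological pushforward $\imath_* : H_{2n-k}(Y,\bQ) \to H_{2n-k}(X,\bQ)$ is a morphism of mixed Hodge structures, and its target is pure of weight $-(2n-k)$ because $X$ is a compact K\"ahler manifold. Strictness with respect to $\bW_\bullet$ in degree $-(2n-k)$ gives
$$\imath_*\(\bW_{-(2n-k)}H_{2n-k}(Y,\bQ)\) = \imath_*\(H_{2n-k}(Y,\bQ)\) \cap \bW_{-(2n-k)}H_{2n-k}(X,\bQ) = \imath_*\(H_{2n-k}(Y,\bQ)\),$$
the last equality holding since the target is pure. Combining the three displays yields $\Ima(\ti{\imath}_*) = \Ima(\imath_*) = \Ker\(H^k(X,\bQ) \to H^k(X\bss Y,\bQ)\)$, as desired.

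The duality bookkeeping of the first step is routine; the single point that must be watched is that $\imath_*$ is genuinely a morphism of mixed Hodge structures onto a \emph{pure} target, since this is exactly what legitimizes replacing the singular $Y$ by its resolution $\ti{Y}$ through Lemma~\ref{lem-fpods}. Beyond keeping the homological weight conventions ($H_{2n-k}$ of a compact K\"ahler manifold being pure of weight $-(2n-k)$) consistent, I expect no serious obstacle.
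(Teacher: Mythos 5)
Your proof is correct and follows essentially the same route as the paper: the paper phrases the first step via the local cohomology sequence $H^k_Y(X,\bQ) \to H^k(X,\bQ) \to H^k(X\bss Y,\bQ)$ and Iversen's duality $H^k_Y(X,\bQ) \simeq H_{2n-k}(Y,\bQ)$, which is just the Poincar\'e--Lefschetz dual formulation of your Borel--Moore exact sequence. The remaining two ingredients --- Lemma~\ref{lem-fpods} identifying $\Ima(\nu_*)$ with $\bW_{k-2n}H_{2n-k}(Y,\bQ)$, and strictness of $\imath_*$ onto the pure target $H_{2n-k}(X,\bQ)$ --- are exactly the ones used in the paper.
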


\begin{proof}
We have the exact sequence
	\begin{equation}\label{s-excision1}
		H^{k}_Y(X,\bQ) \xto{\;\imath_*\,} H^{k}(X,\bQ) \lra H^{k}(X \bss Y,\bQ), 
	\end{equation}
and by Poincar\'e duality~\cite[Proposition II.9.2]{IversenBook}, $\imath_*$ is isomorphic to
	\begin{equation}\label{s-excision2}
		\imath_* \colon H_{2n-k}(Y,\bQ) \lra  H_{2n-k}(X,\bQ).
	\end{equation}
Since $H_{2n-k}(X,\bQ)$ is a pure Hodge structure of weight $k-2n$, we have
	\begin{equation}\label{s-excision3}
		\imath_* \bW_{k-2n} H_{2n-k}(Y,\bQ) = \Ima(\imath_*)
	\end{equation}
by the strictness of the morphism $\imath_*$ of mixed Hodge structures.  Finally, we have
	\begin{equation}\label{s-excision4}
		\Ima\(\nu_* \colon H_{2n-k}(\ti{Y},\bQ) \lra H_{2n-k}(Y,\bQ) \) = \bW_{k-2n} H_{2n-k}(Y,\bQ)
	\end{equation}
by Lemma~\ref{lem-fpods}.  Combining~\eqref{s-excision1},~\eqref{s-excision2},~\eqref{s-excision3},~\eqref{s-excision4} proves Lemma~\ref{lem-excision}.
\end{proof}

\begin{lem}\label{lem-clHdgctr}
Let $X$ be a compact complex variety in the Fujiki class $\cC$,  and let $\nu \colon \ti{X} \to X$ be a desingularization of\, $X$.  If\, $X$ has at worst rational singularities, then $\ker(\nu_* \colon H_2(\ti{X},\bC) \to H_2(X,\bC))$ consists of Hodge classes.
\end{lem}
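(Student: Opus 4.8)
The plan is to dualize the problem into cohomology. Since $\ti{X}$ is a desingularization of a variety in the Fujiki class $\cC$, it lies in class $\cC$ and hence carries pure Hodge structures and satisfies Poincar\'e and Serre duality; in particular Poincar\'e duality identifies $H_2(\ti{X},\bC)$ with the dual of $H^2(\ti{X},\bC)$ as (weight $-2$) Hodge structures, under which a class is of Hodge type $(-1,-1)$ — that is, a Hodge class — exactly when it annihilates $H^{2,0}(\ti{X})\oplus H^{0,2}(\ti{X})$. The Gysin map $\nu_*$ and the pullback $\nu^*\colon H^2(X,\bC)\to H^2(\ti{X},\bC)$ are adjoint for the Kronecker pairings, so $\ker\nu_*=(\Ima\nu^*)^{\perp}$. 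Taking annihilators, the assertion that $\ker\nu_*$ consists of Hodge classes is equivalent to the inclusion $H^{2,0}(\ti{X})\oplus H^{0,2}(\ti{X})\subseteq\Ima\nu^*$. Note that $\nu_*$ is a morphism of mixed Hodge structures, so $\ker\nu_*$ is automatically a sub-Hodge structure of the pure weight $-2$ structure $H_2(\ti{X},\bQ)$; the whole content is that it carries no $(0,-2)$ part.

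Next I would reduce this inclusion to a statement about the lowest piece of the Hodge filtration. The map $\nu^*$ is a morphism of mixed Hodge structures whose target $H^2(\ti{X},\bQ)$ is pure of weight $2$, so $\Ima\nu^*$ is a pure weight-$2$ sub-Hodge structure and $\coker\nu^*$ is pure of weight $2$ with $(p,q)$-part $H^{p,q}(\ti{X})/(\Ima\nu^*)^{p,q}$. By complex conjugation it suffices to show that the $(0,2)$-part of $\coker\nu^*$ vanishes, i.e. that $\Gr_F^0\coker\nu^*=0$. Since morphisms of mixed Hodge structures are strict for the Hodge filtration, $\Gr_F^0\coker\nu^*=\coker\!\left(\Gr_F^0\nu^*\right)$, so the problem becomes the surjectivity of $\Gr_F^0\nu^*\colon \Gr_F^0 H^2(X,\bC)\to\Gr_F^0 H^2(\ti{X},\bC)$. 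Here Lemma~\ref{lem-fpods} already gives $\Ima\nu^*\cong\Gr^{\bW}_2 H^2(X,\bQ)$, but only the purity of the target is needed.

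The final, and main, step is to read off $\Gr_F^0$ as coherent cohomology and invoke the hypothesis on the singularities. For the smooth $\ti{X}$ one has $\Gr_F^0 H^2(\ti{X},\bC)=H^2(\ti{X},\cO_{\ti{X}})$. For $X$ the analogous identification $\Gr_F^0 H^2(X,\bC)=\bH^2(X,\ul{\Omega}{}^0_X)$ holds through the Du Bois complex $\ul{\Omega}{}^\bullet_X$, and by functoriality $\Gr_F^0\nu^*$ is identified with the natural pullback $\bH^2(X,\ul{\Omega}{}^0_X)\to H^2(\ti{X},\cO_{\ti{X}})$. Here the hypothesis that $X$ has rational singularities enters decisively: rational singularities are Du Bois (by a theorem of Kov\'acs, also Saito and Schwede), so $\ul{\Omega}{}^0_X\simeq\cO_X$ and the source is $H^2(X,\cO_X)$; and rationality also gives $R\nu_*\cO_{\ti{X}}=\cO_X$, so that the pullback $H^2(X,\cO_X)\to H^2(\ti{X},\cO_{\ti{X}})=H^2(X,R\nu_*\cO_{\ti{X}})$ is an isomorphism. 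In particular $\Gr_F^0\nu^*$ is surjective, which by the previous reductions proves the lemma. I expect this last step to be the real obstacle: identifying $\Gr_F^0 H^2$ of the \emph{singular} space $X$ with $H^2(X,\cO_X)$ is precisely where the rational (hence Du Bois) hypothesis is used, and the conclusion genuinely fails without it.
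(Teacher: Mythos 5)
Your proof is correct, and it follows the paper's high-level strategy in its first step: the paper also dualizes, observing that it suffices to show $\coker\(\nu^* : H^2(X,\bC) \to H^2(\ti{X},\bC)\)$ consists of Hodge classes. The divergence is in the key input. At that point the paper simply cites a lemma of Bakker--Lehn (for compact class $\cC$ varieties with rational singularities, $\Ima \nu^*$ contains $H^{2,0}(\ti{X})$) and is done in two lines, whereas you supply a proof of that ingredient: purity of the target plus strictness reduces everything to surjectivity of $\Gr_F^0\nu^*$, which you obtain from the Du Bois identification $\Gr_F^0 H^2(X,\bC) \simeq \bH^2(X,\ul{\Omega}^0_X)$, the theorem that rational singularities are Du Bois, and $R\nu_*\cO_{\ti{X}} \simeq \cO_X$. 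Your route buys transparency --- it shows exactly where rationality enters --- at the cost of invoking the Du Bois package, which is standard for complex \emph{algebraic} varieties but needs justification for non-algebraic class $\cC$ spaces (existence and functoriality of $\ul{\Omega}^\bullet_X$ via cubical hyperresolutions compatible with Fujiki's mixed Hodge structures, and Kov\'acs's theorem in that analytic setting); taking care of these points in the class $\cC$ category is essentially the content of the Bakker--Lehn lemma the paper cites, so you should either cite it or flag that extension. Note also that you can lighten your own argument: you never need the full strength of $\ul{\Omega}^0_X \simeq \cO_X$. The natural factorization $\cO_X \to \ul{\Omega}^0_X \to R\nu_*\cO_{\ti{X}}$, whose composite induces an isomorphism $H^2(X,\cO_X) \xto{\sim} H^2(\ti{X},\cO_{\ti{X}})$ by rationality, already forces $\bH^2(X,\ul{\Omega}^0_X) \to H^2(\ti{X},\cO_{\ti{X}})$, i.e.\ $\Gr_F^0\nu^*$, to be surjective --- so Kov\'acs's theorem can be dropped entirely.
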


\begin{proof}
It suffices to show that, dually,
	$$\coker(\nu^* \colon H^2(X,\bC) \lra H^2(\ti{X},\bC))$$	
consists of Hodge classes.  Since $X$ has at worst rational singularities, the image of $\nu^*$ contains $H^{2,0}(\ti{X})$ by~\cite[Lemma 2.1]{BakkerLehnSingTor}.  Hence the above cokernel consists of Hodge classes.
\end{proof}

\ssec{Campana's criterion}\label{ssec-critCampana}
%\hfill

A compact complex variety $X$ is called \textit{algebraically connected} if for every general pair of points $x,y \in X$, there exists a connected proper curve $C \subset X$ such that $x,y \in C$. The following criterion for a variety in the Fujiki class $\cC$ to be Moishezon is due to Campana.

\begin{thm}[Campana {\cite[Section~3, Corollaire du Th\'eor\`eme~6']{CampanaCored}}]\label{thm-critcamp}
A compact complex variety $X$ in the Fujiki class $\cC$ is Moishezon if and only if it is algebraically connected.
\end{thm}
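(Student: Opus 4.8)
That a Moishezon $X$ is algebraically connected I would deduce by reduction to the projective case: $X$ is bimeromorphic to a smooth projective variety $X'$, and a bimeromorphic map between compact varieties in class $\cC$ is an isomorphism over dense Zariski-open subsets, so — since both ``Moishezon'' and ``algebraically connected'' only concern general pairs of points, and strict transforms of curves remain connected curves — it suffices to treat $X \hto \bP^N$ projective. For a general pair $(x,y)$, the intersection of $X$ with a general linear subspace of codimension $\dim X - 1$ passing through $x$ and $y$ is a connected curve through both points (the connectedness part of Bertini's theorem), so $X$ is algebraically connected.

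\textbf{Setup for the converse.} For the implication that algebraic connectedness forces $X$ Moishezon, I would follow Campana and work in the Barlet cycle space $\sC(X)$. The crucial structural input is that, for $X$ in the Fujiki class $\cC$, the irreducible components of $\sC(X)$ are compact and again lie in class $\cC$ (Fujiki--Lieberman); this keeps every construction inside the category at hand and, decisively, permits iteration. Algebraic connectedness then furnishes a \emph{connecting family}: passing to chains and to finitely many components of $\sC(X)$, one obtains a compact parameter space $T$ in class $\cC$, a universal cycle $p : \cZ \to T$, and an evaluation $q : \cZ \to X$, with $p$-fibres connected $1$-cycles, such that the double evaluation $\cZ \times_T \cZ \to X \times X$ is dominant.

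\textbf{The $\gamma$-reduction.} The heart of the argument is Campana's construction of the meromorphic quotient of $X$ by the equivalence relation ``being joined by a connected chain of cycles of the family''. Composing the incidence relation with itself inside the iterated cycle spaces, and using the class-$\cC$ compactness above, one shows this relation is closed and that the composition stabilizes, producing an almost holomorphic meromorphic fibration $\gamma : X \dto \Gamma(X)$ — the $\gamma$-reduction — whose general fibre $F$ is exactly a maximal chain-connected equivalence class (hence algebraically connected), and which is maximal in the sense that through a very general point every curve of the family is absorbed into the fibre. If $X$ is algebraically connected, a general pair of points lies in one equivalence class, so the general fibre $F$ is all of $X$ and $\Gamma(X)$ reduces to a point.

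\textbf{The main obstacle.} It remains to see that $\dim\Gamma(X) = 0$ forces $X$ to be Moishezon, and this is the step I expect to be the genuine difficulty, since on its face it is the statement being proved, now for the fibre $F = X$. The resolution is that Campana's construction produces the fibres not abstractly but as images, under the cycle map, of \emph{Moishezon} parameter spaces: a variety that is a single equivalence class for a connecting family of cycles admits, through the incidence correspondence $\cZ \times_T \cZ \to X \times X$ together with the Moishezon-ness of the relevant components of $\sC(X)$ once the relation is total, enough algebraically independent meromorphic functions to realise $a(X) = \dim X$. Thus $\dim\Gamma(X) = 0$ is equivalent to $X$ being Moishezon. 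The real weight of the theorem therefore lies entirely in establishing the existence, closedness, and termination of the chain-relation quotient, and in the Moishezon-ness of the cycle parameter spaces when the relation is everything; once these are in hand the rest is formal.
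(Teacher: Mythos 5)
First, a point of order: the paper does not prove this theorem at all --- it is quoted from Campana's \emph{Cor\'eduction alg\'ebrique} paper with a citation --- so your attempt can only be judged on its own merits. Your easy direction is fine: a Moishezon $X$ is bimeromorphic to a projective variety, a general complete-intersection curve through two general points avoids the codimension-$\ge 2$ indeterminacy locus of the bimeromorphic map, and its image is a connected compact curve through the corresponding general pair of points of $X$.

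The converse, however, has a genuine gap, and it sits exactly at the step you yourself flag as the main obstacle. The scaffolding you assemble is indeed Campana's: compactness and class-$\cC$ membership of the components of $\sC(X)$ (Lieberman, Fujiki), extraction of a single connecting family via countability of components, the chain-relation quotient ($\gamma$-reduction), and the observation that algebraic connectedness forces the quotient to be a point. But the whole theorem then rests on the implication ``$\dim\Gamma(X)=0 \Rightarrow X$ Moishezon'', and the justification you offer for it is circular. You invoke ``the Moishezon-ness of the relevant components of $\sC(X)$ once the relation is total''. This is not a citable fact, and it cannot come from general cycle-space theory: for $X = T \times B$ with $T$ a $2$-torus containing no curves and $B$ a curve, the covering family $(\{t\}\times B)_{t \in T}$ is parametrized by a component of $\sC(X)$ isomorphic to $T$, which is not Moishezon --- so a covering family of $1$-cycles on a class-$\cC$ space need not have a Moishezon parameter space, and the totality of the chain relation would have to enter the argument in an essential, quantitative way. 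Moreover, Moishezon-ness of the components of $\sC(X)$ is, by Fujiki's results, a \emph{consequence} of the Moishezon-ness of $X$ itself; so the statement you are assuming is essentially equivalent to the conclusion you are trying to reach, and no independent argument for it appears. What Campana's Corollaire actually requires --- manufacturing $\dim X$ algebraically independent meromorphic functions (equivalently, a lower bound on $a(X)$ in terms of the coreduction base) out of the connectedness of the chain relation --- is precisely the piece missing from your proposal: you have built the frame of the proof, but the load-bearing step is absent.
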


We list some direct consequences of Campana's criterion.

\begin{cor}\label{cor-critcamp}
Let $X$ be a compact complex variety in the Fujiki class $\cC$, and let $f \colon X \to B$ be a fibration. Assume that a general fiber of $f$\! and $B$ are both Moishezon. Then $X$ is Moishezon if and only if $f$\! has a multi-section.
\end{cor}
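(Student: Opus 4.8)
The plan is to derive both implications from Campana's criterion (Theorem~\ref{thm-critcamp}), the only nontrivial input being the implication ``algebraically connected $\Rightarrow$ Moishezon''. The elementary tool I will use repeatedly is that in a Moishezon variety $Z$ \emph{any} two points $z,z'$ lie on a connected compact curve contained in $Z$. Indeed, by Moishezon's theorem there is a proper bimeromorphic morphism $\mu_Z \colon \hat Z \to Z$ with $\hat Z$ projective; picking points $\hat z, \hat z'$ of $\hat Z$ above $z$ and $z'$ and joining them by a connected curve $\hat C \subset \hat Z$ (cut $\hat Z$ down to dimension one by general hyperplanes passing through $\hat z$ and $\hat z'$, invoking Bertini's connectedness theorem), the image $\mu_Z(\hat C)$ is a connected compact curve in $Z$ through $z$ and $z'$. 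Throughout, by a \emph{multi-section} of $f$ I mean an irreducible subvariety $M \subset X$ with $\dim M = \dim B$ and $f(M) = B$, so that $f_{|M}$ is finite surjective.

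Suppose first that $X$ is Moishezon, and let $\mu \colon \hat X \to X$ be a projective model as above, with $\hat f = f \circ \mu \colon \hat X \to B$. Setting $d \cnec \dim X - \dim B$, I would take an irreducible component $\hat M$, dominating $B$, of a complete intersection of $d$ general very ample divisors on $\hat X$. Then $\dim \hat M = \dim B$, and $\hat f(\hat M) = B$ because a general such complete intersection meets every fibre of $\hat f$ (the general fibre having dimension $d$). Since the exceptional locus of $\mu$ is a fixed proper closed subset, a general complete intersection meets $\hat X \bss \mathrm{Exc}(\mu)$, so one may arrange $\hat M \not\subset \mathrm{Exc}(\mu)$; then $\mu_{|\hat M}$ is generically finite and $M \cnec \mu(\hat M)$ is an irreducible subvariety of $X$ with $\dim M = \dim B$ and $f(M) = B$, i.e. a multi-section of $f$.

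Conversely, assume $f$ has a multi-section $M$, and write $g \cnec f_{|M} \colon M \to B$. I would first observe that $M$ is Moishezon: it lies in the Fujiki class $\cC$ as a subvariety of $X \in \cC$, and the finite field extension $g^* \colon \bC(B) \hto \bC(M)$ gives $a(M) = \trdeg_{\bC}\bC(M) \ge \trdeg_{\bC}\bC(B) = \dim B = \dim M$, whence $a(M) = \dim M$. Now let $x,y \in X$ be a general pair; then $b_1 = f(x)$ and $b_2 = f(y)$ are general in $B$, so the fibres $F_{b_1}$ and $F_{b_2}$ are Moishezon, and $M$ meets each of them since $g$ is surjective. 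Choosing $p_1 \in M \cap F_{b_1}$ and $p_2 \in M \cap F_{b_2}$, the elementary tool above produces a connected curve $C_1 \subset F_{b_1}$ through $x$ and $p_1$, a connected curve $C_0 \subset M$ through $p_1$ and $p_2$, and a connected curve $C_2 \subset F_{b_2}$ through $p_2$ and $y$. Their union $C_1 \cup C_0 \cup C_2$ is a connected compact curve in $X$ containing $x$ and $y$, so $X$ is algebraically connected and therefore Moishezon by Campana's criterion.

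The main obstacle is the backward implication, and specifically the need to pass between distinct fibres: the two fibres through a general pair $x,y$ communicate only through the multi-section, so one must simultaneously know that $M$ is Moishezon (to move ``horizontally'' inside $M$) and be able to reach the prescribed intersection points $p_i \in M \cap F_{b_i}$ starting from the general points $x,y$ inside the fibres. The latter is precisely why the elementary tool is formulated for \emph{arbitrary} pairs of points rather than general ones; routing each connection through a projective model removes any genericity restriction on the endpoints, which is what makes the three-step chain legitimate.
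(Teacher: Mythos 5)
Your proposal is correct and takes the route the paper intends: the paper gives no written proof, listing Corollary~\ref{cor-critcamp} as a direct consequence of Campana's criterion (Theorem~\ref{thm-critcamp}), and your two directions --- extracting a multi-section from a projective model of a Moishezon $X$, and conversely chaining connected curves through the Moishezon fibers and the Moishezon multi-section to obtain algebraic connectedness --- are precisely the expected filling-in of that claim. The only blemish is the parenthetical claim that $f_{|M}$ is \emph{finite} surjective (dimension reasons only give generically finite), but nothing in your argument uses finiteness.
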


\begin{cor}[\cf~{\cite[Corollary 2.12]{HYLkod3}}]\label{cor-critcampP1}
Let $X$ be a compact complex variety in the Fujiki class $\cC$ and $f \colon X \to B$ a $\bP^1$-fibration. If\, $B$ is Moishezon, then $X$ is Moishezon.
\end{cor}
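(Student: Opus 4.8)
The plan is to deduce the statement from Campana's criterion in the form already recorded as Corollary~\ref{cor-critcamp}. Since the general fiber of $f$ is $\bP^1$, which is projective and hence Moishezon, and since $B$ is Moishezon by hypothesis, Corollary~\ref{cor-critcamp} reduces the claim to a single assertion: the $\bP^1$-fibration $f : X \to B$ admits a multi-section. Thus the whole proof comes down to constructing a subvariety $Z \subset X$ for which $f_{|Z} : Z \to B$ is generically finite and surjective.

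To produce such a $Z$, I would first restrict to a dense Zariski-open $U \subseteq B$ over which $f$ is a smooth proper fibration with fibers isomorphic to $\bP^1$; this exists precisely because the general fiber is $\bP^1$. Over $U$ the relative anticanonical line bundle $\omega_{X_U/U}^{-1}$ restricts to $\cO_{\bP^1}(2)$ on each fiber, so by Grauert's direct-image theorem its pushforward $\cE \cnec f_*\omega_{X_U/U}^{-1}$ is locally free of rank $3$, and $X_U$ embeds over $U$ into the $\bP^2$-bundle $P \cnec \bP(\cE)$ as a conic bundle, meeting each fiber of $P$ (a copy of $\bP^2$) in a smooth conic. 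Equivalently, over the function field $K = \bC(B)$ of the Moishezon base, the generic fiber $X_\eta$ is a smooth proper genus-$0$ curve, hence a plane conic over $K$.

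The multi-section then comes from a relative line in $P$. Choosing a rank-$2$ subsheaf $\cF \subset \cE$ and shrinking $U$ so that it becomes a subbundle, $\bP(\cF) \subset P$ is a $\bP^1$-subbundle whose intersection with the conic bundle $X_U$ is, fiber by fiber, a length-$2$ subscheme, hence finite of degree $2$ over $U$. In the generic-fiber language this is merely the statement that a smooth conic over $K$ carries an effective divisor of degree $2$ defined over $K$ (the anticanonical, equivalently hyperplane, class), i.e. a closed point of $X_\eta$ of degree $\le 2$. Taking the closure $Z$ of this relative divisor (equivalently of the degree-$\le 2$ closed point) inside $X$ yields a subvariety with $f_{|Z}$ generically finite of degree $\le 2$ onto $B$, that is, a multi-section.

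I expect the only delicate point to be carrying out this construction in the complex-analytic category rather than the algebraic one: one must know that $\cE = f_*\omega_{X_U/U}^{-1}$ is coherent, locally free of the expected rank, and compatible with base change, which is guaranteed by Grauert's theorems since $f$ is proper, and that the relative conic-bundle embedding and its intersection with $\bP(\cF)$ remain valid analytically. Granting this, $f$ has a multi-section, and Corollary~\ref{cor-critcamp} immediately gives that $X$ is Moishezon.
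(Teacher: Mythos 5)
Your reduction via Corollary~\ref{cor-critcamp} to producing a multi-section of $f$, and the local analysis over $U$ (Grauert's theorem, local freeness and base change for $\cE \cnec f_*\omega^{-1}_{X_U/U}$, the relative anticanonical embedding of $X_U$ as a conic bundle in $\bP(\cE)$) are all correct; contrary to your closing paragraph, these are \emph{not} the delicate points. The genuine gap is global, and it has two faces. First, the step ``choosing a rank-$2$ subsheaf $\cF \subset \cE$'' is asserted, not proved: an analytic vector bundle over a complex manifold need not admit any coherent subsheaf of intermediate rank (non-filtrable bundles exist, e.g.\ on non-algebraic $2$-tori, by Elencwajg--Forster), so some hypothesis must be used here --- and note that your construction nowhere uses that $B$ is Moishezon before the final appeal to Corollary~\ref{cor-critcamp}, which is a sign that something is missing. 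Second, even granting $\cF$ (for instance one can shrink $U$ to a Stein open subset and use Cartan's Theorem A to produce sections of $\cE$, hence subsheaves), the phrase ``taking the closure $Z$ of this relative divisor inside $X$'' is not legitimate: the closure in $X$ of an analytic subset of $f^{-1}(U)$ need not be analytic across the boundary $f^{-1}(B \setminus U)$. Remmert--Stein does not apply, because the bisection has dimension $\dim B$, which does not exceed $\dim f^{-1}(B\setminus U)$ when the degeneration locus of $f$ is a divisor in $B$ (the typical case for conic bundles); and holomorphic data on a Zariski-open subset can oscillate arbitrarily near a codimension-one boundary (the $e^{1/z}$ phenomenon). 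For the same reason, your reformulation in terms of ``the generic fiber $X_\eta$, a conic over $K = \bC(B)$'' is circular: $X$ is only an analytic space, and $f$ has a scheme-theoretic generic fiber only if $f$ is meromorphically algebraic over $B$, which is essentially what is being proven. A correct implementation of your strategy must use the Moishezon hypothesis exactly at this point: replace $B$ by a projective modification and $X$ by a compatible bimeromorphic model on which $\omega^{-1}_{X/B}$ is a globally defined line bundle; then $\cE = f_*\omega^{-1}_{X/B}$ is coherent on a projective variety, hence algebraic by GAGA, and after twisting by an ample line bundle one obtains \emph{global} sections cutting out the bisection over $f^{-1}(U)$; only then is its closure contained in a global analytic zero locus whose horizontal components give the multi-section.

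It is worth comparing this with the argument behind the paper's citation~\cite{HYLkod3}, which applies Campana's criterion (Theorem~\ref{thm-critcamp}) directly instead of reducing to a multi-section over all of $B$, and thereby avoids every global algebraization issue. Two general points $x, y \in X$ lie over general points of $B$, which (as $B$ is Moishezon) are joined by an irreducible curve $C \subset B$ meeting $U$; the closure $S$ in $X$ of $f^{-1}(C \cap U)$ is an irreducible compact surface in the Fujiki class $\cC$ containing $x$ and $y$ and fibered over $C$ with general fiber $\bP^1$. A desingularization of $S$ is a compact K\"ahler surface of Kodaira dimension $-\infty$, hence projective by the Enriques--Kodaira classification, so $x$ and $y$ are joined by a chain of curves inside $S$; thus $X$ is algebraically connected and Moishezon. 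The only ``section-type'' input there is Tsen's theorem for $\bP^1$-fibrations over \emph{curves}, hidden in the surface classification; nothing needs to be constructed over the whole higher-dimensional base $B$, which is precisely the step your proposal cannot complete without GAGA.
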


\begin{lem}\label{lem-redalgph}
Let $X$ be a compact K\"ahler manifold with $a(X) = \dim X - 1$.  Then the algebraic reduction $f \colon X \dto B$ of\, $X$ is almost holomorphic, and its general fiber is an elliptic curve.
\end{lem}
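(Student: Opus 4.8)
The plan is to pass to a holomorphic model, establish almost-holomorphicity by exhibiting a multi-section, and then pin down the genus of the general fibre by ruling out the genus $0$ and genus $\ge 2$ cases via Campana's criterion and a relative pluricanonical argument.

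First I would resolve the indeterminacy of $f$ by a bimeromorphic morphism $g : X' \to X$ with $X'$ in the Fujiki class $\cC$, so that $f' \cnec f \circ g : X' \to B$ is holomorphic. Since $f$ is the algebraic reduction, the field of meromorphic functions of $X'$ coincides with $f'^*\bC(B)$; hence the connected-fibre part of the Stein factorisation of $f'$ has the same function field as $B$, and being finite over the normal variety $B$ it is an isomorphism. Thus I may assume $f'$ has connected fibres. As $\dim X' - \dim B = n-(n-1)=1$, generic smoothness shows that the general fibre $C$ of $f'$ is a smooth connected projective curve; in particular both $C$ and $B$ are Moishezon.

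Next I would show that $f$ is almost holomorphic, that is, that the general fibre $C$ avoids the $g$-exceptional locus $E \subset X'$. If not, then $f'(E)=B$, so some component $E_0$ of $E$ dominates $B$; as $E_0$ is a proper analytic subset of $X'$ we have $\dim E_0 \le n-1$, whence $\dim E_0 = n-1$ and $f'_{|E_0} : E_0 \to B$ is generically finite. Then $E_0$ would be a multi-section of $f'$, and since $C$ and $B$ are Moishezon, Corollary~\ref{cor-critcamp} would force $X'$, hence $X$, to be Moishezon, contradicting $a(X)=n-1<n$. Therefore $f$ is almost holomorphic, and its general fibre is $C$.

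It remains to identify $C$ with an elliptic curve by excluding the other possibilities for its genus. If $C \simeq \bP^1$, then $f'$ is a $\bP^1$-fibration over the Moishezon base $B$, so $X'$ is Moishezon by Corollary~\ref{cor-critcampP1}, a contradiction. If the genus of $C$ is $\ge 2$, then over the dense open $U \subset B$ where $f'$ is smooth the relative dualising sheaf $\go_{f'}$ is $f'$-ample, so the relative tricanonical map embeds $f'^{-1}(U)$ into the projectivisation of $f'_*\go_{f'}^{\otimes 3}$; extending this push-forward to a coherent sheaf on all of $B$ produces a projective-space bundle $P \to B$ together with a generically injective meromorphic map $X' \dto P$ over $B$. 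As $P$ is projective and the closure of the image is an analytic, hence by Chow projective, subvariety of dimension $n$, the manifold $X'$ would be bimeromorphic to a projective variety and thus Moishezon, once more a contradiction. Hence the genus of $C$ equals $1$ and $C$ is an elliptic curve. The main obstacle is this higher-genus exclusion: one must make the relative pluricanonical embedding over the non-compact base $U$ precise, extend the push-forward sheaf coherently across $B \setminus U$, and appeal to Chow's theorem to algebraise the image; by contrast, both the almost-holomorphicity and the genus-zero case follow almost formally from Corollaries~\ref{cor-critcamp} and~\ref{cor-critcampP1}.
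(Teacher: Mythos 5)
Your proof is correct, and your treatment of almost-holomorphicity is precisely the paper's argument: a component of the exceptional locus dominating $B$ would be a (generically finite) multi-section of a fibration whose base and general fibre are Moishezon, so Corollary~\ref{cor-critcamp} would force $X$ to be Moishezon, contradicting $a(X)=\dim X-1$. Where you genuinely diverge is the identification of the general fibre: the paper disposes of this in one line by citing \cite[Theorem 12.4]{UenoClassAlgVar}, which already says that any resolution of the algebraic reduction is an elliptic fibration, whereas you re-prove that statement. Your genus-$0$ exclusion via Corollary~\ref{cor-critcampP1} is exactly right, and your genus-$\ge 2$ exclusion (relative tricanonical map into $\bP\bigl(f'_*\go_{f'}^{\otimes 3}\bigr)$, then algebraization of the image) is sound, but three points you flag or elide should be recorded precisely: (i) no ``extension of the push-forward across $B\setminus U$'' is needed, since $f'_*\go_{f'}^{\otimes 3}$ is coherent on all of $B$ by Grauert's direct image theorem; the genuine work is that the relative tricanonical map extends to a meromorphic map on all of $X'$ (graph closure analytic), which holds because over Stein open sets of $B$ the map is given by finitely many holomorphic sections of the line bundle $\go_{f'}^{\otimes 3}$; (ii) GAGA and Chow require $B$ \emph{projective}, not merely Moishezon, so you must use (and should state) that the algebraic reduction base may be chosen smooth projective --- the same choice of a normal model is implicitly used in your Stein-factorization step; (iii) generic injectivity plus properness then gives a bimeromorphic map onto a projective variety, whence $X'$ is Moishezon, as you say. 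In terms of what each route buys: the paper's proof is two lines long but rests on a black box, while yours is self-contained and makes visible that the only inputs are Campana's criterion and classical pluricanonical positivity of curves of general type; if you want a middle ground, the genus-$\ge 2$ case also follows from Ueno's result that for a line bundle $L$ on $X'$ the restriction of $L$ to a general algebraic-reduction fibre $F$ satisfies $\kappa(F, L_{|F})\le 0$, applied to $L=K_{X'}$.
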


Here we recall that a map $f \colon X \dto B$ is called \textit{almost holomorphic} if there exists a non-empty Zariski open $U \subset X$ such that $f_{|U}$ is well defined and proper onto its image.

\begin{proof}
We already know by~\cite[Theorem 12.4]{UenoClassAlgVar} that any resolution of $f$ is an elliptic fibration. It remains to show that $f$ is almost holomorphic. Let $\ti{f} \colon \ti{X} \to B$ be a resolution of $f$ by a compact K\"ahler manifold $\ti{X}$. Let $E \subset \ti{X}$ be the exceptional divisor of $\ti{X} \to X$. Since $X$ is not Moishezon and since $B$ is Moishezon and a general fiber of $\ti{f}$ is a curve, Corollary~\ref{cor-critcamp} implies that $E$ does not dominate $B$. Therefore, $f$ is almost holomorphic.
\end{proof}

\ssec{Bimeromorphic models of compact K\"ahler threefolds with \texorpdfstring{$\boldsymbol{a \le 1}$}{a at most 1}}
%\hfill

Bimeromorphic models of compact K\"ahler threefolds of algebraic dimension $a \le 1$ have essentially been classified by Fujiki~\cite{Fujiki1983} (see also~\cite[Proposition 1.9]{HYLkod3}).  In this subsection, we state this classification result in the following form for our needs.

\begin{pro}\label{pro-class}
Let $X_0$ be a compact K\"ahler threefold such that $a(X_0) \le 1$.  Assume that $X_0$ is not a simple non-Kummer threefold.  Then $X_0$ is bimeromorphic to a compact complex manifold $X$ in the Fujiki class $\cC$, satisfying one of the following descriptions: 
\begin{enumerate}
\item\label{pc-1}  $X$ is the total space of
a $\bP^1$-fibration $X \to S$ over a smooth compact K\"ahler surface $S$.
\item\label{pc-2} $X = (S \times F) / G$, where $S$ is a non-algebraic smooth K\"ahler surface, $F$ is a smooth curve, and $G$ is a finite group acting diagonally on $S \times F$.
\item\label{pc-3} $X$ is the total space of a fibration $f \colon X \to B$ over a smooth projective curve $B$ whose general fiber is an abelian variety.
\item\label{pc-4} $X = \ti{X}/G$, where $G$ is a finite group and $\ti{X}$ is the total space of a $G$-equivariant smooth isotrivial fibration $\ti{f} \colon \ti{X} \to \ti{B}$ in non-algebraic 2-tori without multi-sections.
\item\label{pc-5} $X$ is the quotient $T/G$ of a 3-torus by a finite group.
\end{enumerate}
In~\eqref{pc-1} and~\eqref{pc-3}, we can choose $X$ to be a compact K\"ahler manifold.
\end{pro}

\begin{proof}
First we assume that $X_0$ is uniruled.  Since $X_0$ is non-algebraic, the base of the maximally rationally connected  fibration $X_0 \dto S_0$ is a surface (see \eg~\cite[Proof of Theorem 9.1]{CHPabun}).  Resolving $X_0 \dto S_0$ by some K\"ahler desingularizations of $X_0$ and $S_0$ gives rise to a $\bP^1$-fibration $X \to S$ as in~\eqref{pc-1}.	
Now assume that $X_0$ is not uniruled.  If $a(X_0) = 0$, then by~\cite[Section~1.3, Theorem]{Fujiki1983}, since $X_0$ is not a simple non-Kummer threefold by assumption, necessarily $X_0$ is bimeromorphic to a quotient $T/G$ of a 3-torus $T$ by a finite group $G$ as in~\eqref{pc-5}.
	
Assume that $a(X_0) = 1$; then by~\cite[Section~1.3, Theorem and Section~11.2, Theorem 3]{Fujiki1983}, $X_0$ is bimeromorphic to a threefold $X$ such that
\begin{itemize}
\item  either $X$ satisfies~\eqref{pc-2}; 
\item or $X$ is the total space of a fibration $f \colon X \to B$ over a smooth projective curve such that a general fiber $F$ of $f$ is a 2-torus and $f$ satisfies ``Property (A)'' (namely, for any fibration $g\colon X' \to B$ and any bimeromorphic map $\phi \colon X \dto X'$ over $B$, there exists a Zariski open $U \subset B$ such that $\phi$ induces an isomorphism $f^{-1}(U) \simeq g^{-1}(U)$).
\end{itemize}
	
Consider a fibration $f \colon X \to B$ as in the second case.  By~\cite[Lemma 11.1]{Fujiki1983}, the fact that $f$ has ``Property (A)'' implies that $f$ has no multi-section.  It also implies that if $X' \to X$ is a K\"ahler desingularization of $X$, then a general fiber of the composition $X' \to X \to B$ is still a 2-torus.  Therefore, up to replacing $X$ with $X'$, we can assume that $X$ is a compact K\"ahler manifold.  Assume that $F$ is not algebraic. Then by~\cite[Remark 13.1]{Fujiki1983}, $X_0$ is bimeromorphic to the quotient of the total space of a $G$-equivariant smooth isotrivial torus fibration $\ti{f} \colon \ti{X} \to \ti{B}$ by $G$.  This shows that $f$ satisfies description~\eqref{pc-4} in Proposition~\ref{pro-class}.  Hence the list of bimeromorphic descriptions of $X$ in Proposition~\ref{pro-class} is exhaustive.
\end{proof}

\section{Dual positive cones under dominant meromorphic maps}\label{sec-psefD}

\ssec{Dual K\"ahler cones}
%\hfill

First we prove that the existence of rational classes in the interior of the dual K\"ahler cone is invariant under bimeromorphic modifications (see Proposition~\ref{pro-domduK} for a more general statement).  The statement can be reduced to the special case of blow-ups along smooth centers.

\begin{lem}\label{lem-pullbackKD} 
Let $X$ be a compact K\"ahler manifold, and let $\nu \colon \ti{X} \to X$ be the blow-up of\, $X$ along a submanifold $Y \subset X$. If\, $\Int(\cK(X)^\vee) \cap H^{2n-2}(X,\bQ) \ne \emptyset$, then $\Int(\cK(\ti{X})^\vee) \cap H^{2n-2}(\ti{X},\bQ) \ne \emptyset$.
\end{lem}

In dimension 3, Lemma~\ref{lem-pullbackKD} was proven by Oguiso and Peternell in~\cite[Proposition 2.1]{OguisoPeternellconeKahdual}.  We prove Lemma~\ref{lem-pullbackKD} in arbitrary dimension with a different argument, relying on~\cite[Th\'eor\`eme 1]{Paunthese} as a key ingredient.

\begin{proof}[Proof of Lemma~\ref{lem-pullbackKD}]
Let $E = \nu^{-1}(Y)$ be the exceptional divisor, and let $\ell$ be a line in $\nu^{-1}(y)$ for some $y \in Y$.  Then every element $\ti{\gamma} \in H^{1,1}(\ti{X},\bR)$ is of the form $\ti{\gamma} = \nu^*\gamma - rE$, where $\gamma \cnec \nu_*\ti{\gamma} \in H^{1,1}(X,\bR)$ and $r$ is some real number.  Fix $\ga \in \Int(\cK(X)^\vee) \cap H^{2n-2}(X,\bQ)$.  We will construct a $q \in \bQ_{>0}$ satisfying
	\begin{equation}\label{ineq}
	(\nu^*\ga + q\ell)(\nu^*\gamma - rE) > 0
	\end{equation}
for all $\gamma \in H^{1,1}(X,\bR)$ and $r \in \bR$ such that $\nu^*\gamma - rE \ne 0$ is nef; this implies
	$$\nu^*\ga + q\ell \in \Int(\cK(\ti{X})^\vee) \cap H^{2n-2}(\ti{X},\bQ).$$
	
Note that if $\nu^*\gamma - rE$ is nef, then
$$r = (\nu^*\gamma - rE) \cdot \ell \ge 0.$$
Also note that if moreover $\nu^*\gamma - rE \ne 0$, then $\gamma \ne 0$ because otherwise $-rE \in \ol{\cK(\ti{X})}$ implies $r = 0$.  For every $\gamma \in \nu_*\ol{\cK(\ti{X})} \subset H^{1,1}(X,\bR)$, define
	$$r_\gamma \cnec \min \Set{ r \in \bR | \nu^*\gamma - rE \text{ is nef }} \ge 0.$$
	
Let
$$\cC \cnec \Set{\gamma \in \nu_*\ol{\cK(\ti{X})} | \ga \cdot \gamma > 0}.$$
Fix a norm $\|\cdot\|$ on $H^{1,1}(X,\bR)$.  For every subset $\gS \subset H^{1,1}(X,\bR)$, define
	$$\gS_1 \cnec \Set{\gs \in \gS | \| \gs \| = 1}.$$
	
\begin{claim} We have
		$$R \cnec \inf \Set{r_\gamma | \gamma \in  \(\nu_*\ol{\cK(\ti{X})}\)_1 \bss \cC } > 0.$$
\end{claim}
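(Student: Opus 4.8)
The plan is to argue by contradiction, transferring the question from the image cone $\nu_*\ol{\cK(\ti{X})}$ downstairs back up to the genuinely closed nef cone $\ol{\cK(\ti{X})}$ on $\ti{X}$, where sequential limits are well behaved. So suppose $R = 0$. Since $R$ is an infimum of nonnegative reals, this hypothesis provides a sequence $\gamma_k \in \(\nu_*\ol{\cK(\ti{X})}\)_1 \bss \cC$ with $r_{\gamma_k} \to 0$ (in particular the index set is nonempty). All the $\gamma_k$ lie on the unit sphere of $H^{1,1}(X,\bR)$, which is compact, so after passing to a subsequence I may assume $\gamma_k \to \gamma_\infty$ with $\|\gamma_\infty\| = 1$; in particular $\gamma_\infty \ne 0$.

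The crucial feature is that the $E$-coefficient $r_{\gamma_k}$ goes to $0$. By the very definition of $r_{\gamma_k}$ as the minimum of a closed nefness condition, each class $\ti{\omega}_k \cnec \nu^*\gamma_k - r_{\gamma_k}E$ is nef on $\ti{X}$. Because $\nu^*$ is continuous and $r_{\gamma_k} \to 0$, we have $\ti{\omega}_k \to \nu^*\gamma_\infty$, and since $\ol{\cK(\ti{X})}$ is closed, the limit $\nu^*\gamma_\infty$ is again nef on $\ti{X}$. I emphasize that this step uses only compactness of the sphere together with closedness of the nef cone upstairs, and so it bypasses the delicate question of whether the image cone $\nu_*\ol{\cK(\ti{X})}$ is itself closed.

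It remains to descend nefness along $\nu$: from $\nu^*\gamma_\infty$ nef on $\ti{X}$ I want to conclude that $\gamma_\infty$ is nef on $X$. This is precisely the point where~\cite[Th\'eor\`eme 1]{Paunthese} enters, and it is the main obstacle of the argument, since the pushforward of a nef class is in general only pseudoeffective and a genuine descent-of-nefness input cannot be replaced by elementary cone manipulations. Granting $\gamma_\infty \in \ol{\cK(X)}$, and recalling that $\ga \in \Int(\cK(X)^\vee)$ pairs strictly positively with every nonzero nef class (the standard description of the interior of a dual cone), we obtain $\ga \cdot \gamma_\infty > 0$, that is $\gamma_\infty \in \cC$. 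On the other hand, each $\gamma_k \notin \cC$ means $\ga \cdot \gamma_k \le 0$, and letting $k \to \infty$ gives $\ga \cdot \gamma_\infty \le 0$, a contradiction. Therefore $R > 0$.
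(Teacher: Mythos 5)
Your proposal is correct and follows essentially the same route as the paper's own proof: argue by contradiction, extract a convergent subsequence on the unit sphere, use closedness of $\ol{\cK(\ti{X})}$ to see that $\nu^*\gamma_\infty$ is a limit of nef classes and hence nef, descend nefness to $\gamma_\infty$ via~\cite[Th\'eor\`eme 1]{Paunthese}, and contradict $\ga \cdot \gamma_k \le 0$ using $\ga \in \Int(\cK(X)^\vee)$. The only cosmetic difference is that you pass the inequality $\ga \cdot \gamma_k \le 0$ to the limit, whereas the paper deduces $\ga \cdot \gamma_i > 0$ for $i \gg 0$ by continuity; these are the same argument.
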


\begin{proof}
Since $r_\gamma \ge 0$ for each $\gamma$, we have $R \ge 0$.  Assume to the contrary that $R = 0$.  Then there exists a sequence $\gamma_i$ in $ \(\nu_*\ol{\cK(\ti{X})}\)_1 \bss \cC$ such that $\lim_{i \to \infty} r_{\gamma_i} = 0$.  Up to extracting a subsequence, we can assume that  $\gamma \cnec \lim_{i \to \infty} \gamma_i \in H^{1,1}(X,\bR)_1$ exists, so
$$\nu^*\gamma = \lim_{i \to \infty}( \nu^*\gamma_i - r_{\gamma_i} E)$$
is nef.  Thus $\gamma$ is nef by~\cite[Th\'eor\`eme 1]{Paunthese}, so $\ga \cdot \gamma > 0$ (because $\ga \in \Int(\cK(X)^\vee)$).  It follows that $\ga \cdot \gamma_i > 0$ for $i \gg 0$, contradicting $\gamma_i \not\in \cC$.  Hence $R > 0$.
\end{proof}

Let $M < 0$ be such that $M \le \ga \cdot c$ for all $c \in H^{1,1}(X,\bR)_1$.  By the above claim, we can find a $q \in \bQ_{>0}$ such that
$$M + qR > 0.$$
Let $\gamma \in H^{1,1}(X,\bR)$ and $r \in \bR$ be such that $\nu^*\gamma - rE \ne 0$ is nef (so $\gamma \in \nu_*\ol{\cK(\ti{X})} $, $\gamma \ne 0$, and $r \ge 0$).  If $\gamma \in \cC$, then
$$(\nu^*\ga + q\ell)(\nu^*\gamma - rE) = \ga \cdot \gamma + qr > 0.$$	
Suppose that $\gamma \not\in \cC$; then
\begin{equation*}
	\begin{split}
	(\nu^*\ga + q\ell)(\nu^*\gamma - rE) 
	& = \|\gamma\| \(\ga \cdot \frac{\gamma}{\|\gamma\|} + q\frac{r}{\|\gamma\|}\) \\
	& \ge \|\gamma\| \(M + q r_{\gamma/\|\gamma\|}\) \ge 
	\|\gamma\| \(M + qR\) > 0,
	\end{split}
	\end{equation*}
where the first inequality follows from the nefness of $\nu^*\frac{\gamma}{\|\gamma\|} - \frac{r}{\|\gamma\|}E$ and the second inequality from $\frac{\gamma}{\|\gamma\|} \in \(\nu_*\ol{\cK(\ti{X})}\)_1 \bss \cC$.  Hence~\eqref{ineq} holds regardless of whether or not $\gamma \in \cC$, which finishes the proof.
\end{proof}

\begin{pro}\label{pro-domduK} 
Let $f \colon X \dto Y$ be a dominant meromorphic map between compact K\"ahler manifolds.  If\, $X$ satisfies the dual Kodaira condition (K), then so does $Y$.
\end{pro}

\begin{proof}
Let $X \xlto{\nu} \ti{X} \xto{\ti{f}} Y$ be a resolution of $f$ by a sequence of blow-ups $\nu \colon \ti{X} \to X$ along smooth centers.  By Lemma~\ref{lem-pullbackKD}, there exists an $\ti{\ga} \in \Int(\cK(\ti{X})^\vee) \cap H_2(\ti{X},\bQ)$.  We conclude by~\cite[Proposition 2.5]{OguisoPeternellconeKahdual} that $\ti{f}_*\ti{\ga} \in \Int(\cK(Y)^\vee) \cap H_2(Y,\bQ)$.
\end{proof}

We also have the following for dual K\"ahler cones.

\begin{lem}\label{lem-finietintK}
Let $f \colon X \to Y$ be a finite morphism between compact K\"ahler manifolds.  If $\ga \in \Int(\cK(Y)^\vee)$, then $f^*\ga \in \Int(\cK(X)^\vee)$.
\end{lem}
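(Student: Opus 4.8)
The plan is to combine the projection formula (Proposition~\ref{pro-formproj}) with the fact that a finite \'etale pushforward sends K\"ahler classes to K\"ahler classes (Lemma~\ref{lem-pousKah}), while exploiting the standard description of the interior of a dual cone. Write $n \cnec \dim X = \dim Y$. Since the nef cone $\ol{\cK(X)} \subset H^{1,1}(X,\bR)$ is a salient closed convex cone with nonempty interior, a class $\gd \in H^{n-1,n-1}(X,\bR)$ lies in $\Int(\cK(X)^\vee)$ as soon as $\la \gd, \go \ra > 0$ for every nonzero nef class $\go \in \ol{\cK(X)}$. Accordingly, I would fix an arbitrary nonzero nef class $\go$ on $X$ and aim to prove $\la f^*\ga, \go \ra > 0$.

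First I would transport the pairing to $Y$. As $f$ is proper, its Gysin morphism preserves the integral over the fundamental class, so the projection formula gives
$$\la f^*\ga, \go \ra = \int_X f^*\ga \cdot \go = \int_Y f_*(f^*\ga \cdot \go) = \int_Y \ga \cdot f_*\go = \la \ga, f_*\go \ra.$$
Since $\ga \in \Int(\cK(Y)^\vee)$, it then suffices to show that $f_*\go$ is a \emph{nonzero} nef class on $Y$. That $f_*\go$ is nef follows from Lemma~\ref{lem-pousKah}: the class $\go$ is a limit of K\"ahler classes on $X$, each of which pushes forward to a K\"ahler class on $Y$, and the Gysin map is continuous, so $f_*\go \in \ol{\cK(Y)}$.

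The main point, and the step I expect to require the most care, is the nonvanishing of $f_*\go$. Here I would test $f_*\go$ against a suitable power of a K\"ahler class. Fix a K\"ahler class $\kappa$ on $Y$; because $f$ is a local biholomorphism, $f^*\kappa$ is a K\"ahler class on $X$. Using the projection formula again,
$$\la f_*\go, \kappa^{n-1} \ra = \int_Y f_*\go \cdot \kappa^{n-1} = \int_X \go \cdot f^*(\kappa^{n-1}) = \int_X \go \cdot (f^*\kappa)^{n-1}.$$
Now $(f^*\kappa)^{n-1} \in \Int(\Psef(X)^\vee)$ because $f^*\kappa \in \cK(X)$, whereas $\go$ is a nonzero nef class, hence a nonzero pseudoeffective class (as $\cK(X) \subset \Psef(X)$ and $\Psef(X)$ is closed). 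By the description of the interior of the dual pseudoeffective cone as the set of classes pairing strictly positively with every nonzero pseudoeffective class, the last integral is strictly positive, so $f_*\go \ne 0$.

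Putting these together, $f_*\go$ is a nonzero nef class on $Y$, and therefore $\la \ga, f_*\go \ra > 0$ since $\ga \in \Int(\cK(Y)^\vee)$. By the displayed identity this yields $\la f^*\ga, \go \ra > 0$, and as $\go$ was an arbitrary nonzero nef class on $X$, I conclude $f^*\ga \in \Int(\cK(X)^\vee)$.
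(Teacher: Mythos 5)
Your proof is correct and takes essentially the same route as the paper's: both rest on the projection formula identity $f^*\ga \cdot \go = \ga \cdot f_*\go$ together with Lemma~\ref{lem-pousKah} to see that $f_*\go$ is nef, then invoke $\ga \in \Int(\cK(Y)^\vee)$. The one place you go beyond the paper is in verifying that $f_*\go \ne 0$ whenever $\go \ne 0$ (done correctly, via the standard fact, stated in the paper's introduction, that $(f^*\kappa)^{n-1} \in \Int(\Psef(X)^\vee)$ for a K\"ahler class $\kappa$); the paper's two-line proof simply asserts $\ga \cdot f_*\go > 0$ and leaves this nonvanishing implicit, so your extra step fills in a detail rather than constituting a different argument.
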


\begin{proof}
Fix a K\"ahler class $\go_Y \in H^2(Y,\bR)$ on $Y$.  Since $f$ is finite, $f^*\go_Y$ is a K\"ahler class on $X$.  Thus for every non-zero nef class $\go \in \ol{\cK(X)}$, we have $(f_*\go) \cdot \go_Y^{n-1} = \go \cdot f^*\go_Y^{n-1} > 0$, where $n = \dim X$, so $f_*\go \ne 0$ in $H^2(Y,\bR)$.  By Lemma~\ref{lem-pousKah}, we have $f_*\go \in \ol{\cK(Y)}$.  It follows that $f^*\ga \cdot \go = \ga \cdot f_*\go > 0$.  Hence $f^*\ga \in \Int(\cK(X)^\vee)$.
\end{proof}

\ssec{Dual pseudoeffective cones}
%\hfill

We start with the easy observation that the interior of the dual pseudoeffective cone is stable under pushforwards by surjective morphisms.

\begin{lem}\label{lem-push}
Let $f \colon X \to Y$ be a surjective map between compact K\"ahler manifolds. If $\ga \in \Int(\Psef(X)^\vee)$, then $f_*\ga \in \Int(\Psef(Y)^\vee)$.
\end{lem}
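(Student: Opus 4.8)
The plan is to prove the dual statement by duality of cones. Recall that $\Psef(Y)^\vee$ is the cone dual to $\Psef(Y)$ under the Poincaré pairing, and that the interior $\Int(\Psef(Y)^\vee)$ consists precisely of those classes $\beta \in H^{n-1,n-1}(Y,\bR)$ satisfying $\la \beta, \eta \ra > 0$ for every \emph{nonzero} $\eta \in \Psef(Y)$ (since $\Psef(Y)$ is a closed convex cone that is salient, i.e.\ contains a Kähler class and hence has nonempty interior in $H^{1,1}(Y,\bR)$, strict positivity on the nonzero boundary classes characterises the interior of the dual). So to show $f_*\ga \in \Int(\Psef(Y)^\vee)$ it suffices to verify that $\la f_*\ga, \eta \ra > 0$ for every nonzero $\eta \in \Psef(Y)$.

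First I would reduce this to a statement pulled back to $X$ using the projection formula (Proposition~\ref{pro-formproj}), which gives $\la f_*\ga, \eta \ra = \la \ga, f^*\eta \ra$. Thus the whole problem becomes: show that $f^*\eta$ is a nonzero pseudoeffective class on $X$ whenever $\eta \in \Psef(Y)$ is nonzero, because then the hypothesis $\ga \in \Int(\Psef(X)^\vee)$ immediately yields $\la \ga, f^*\eta \ra > 0$, completing the argument. The two things to check are therefore (a) $f^*$ maps $\Psef(Y)$ into $\Psef(X)$, and (b) $f^*$ is injective, so that nonzero classes pull back to nonzero classes.

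For (a), the pullback of a closed positive current of bidegree $(1,1)$ under a surjective holomorphic map between compact Kähler manifolds is again a closed positive current (the pullback is well defined on positive currents given by plurisubharmonic local potentials, and positivity and closedness are preserved), so $f^*\Psef(Y) \subset \Psef(X)$; alternatively one passes to the level of classes and uses that $f^*$ of a pseudoeffective class is pseudoeffective. For (b), injectivity of $f^* : H^2(Y,\bR) \to H^2(X,\bR)$ for a surjective $f$ between compact Kähler manifolds follows because $f_* f^* = (\deg f)\cdot \Id$ on top-degree considerations fails in general when $\dim X > \dim Y$, so instead I would invoke Lemma~\ref{lem-fpods}: it identifies $\ker(f^*)$ on $H^k(Y,\bQ)$ with $\bW_{k-1}H^k(Y,\bQ)$, which vanishes when $H^k(Y,\bQ)$ is already pure of weight $k$, as is the case for $k=2$ on the compact Kähler manifold $Y$.

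The main obstacle is making the positivity strict rather than merely non-strict: the hypothesis $\ga \in \Int(\Psef(X)^\vee)$ gives strict positivity only against \emph{nonzero} pseudoeffective classes, so the crux is genuinely step (b), the nonvanishing of $f^*\eta$. I would handle this by combining the injectivity of $f^*$ from Lemma~\ref{lem-fpods} (noting $H^2(Y,\bQ)$ is a pure Hodge structure of weight $2$, so $\bW_1 H^2(Y,\bQ) = 0$) with the fact that a nonzero pseudoeffective class is nonzero in $H^2(Y,\bR)$; then $f^*\eta \neq 0$ in $H^2(X,\bR)$, it lies in $\Psef(X)$ by step (a), and strict positivity of $\la \ga, f^*\eta\ra$ follows, as desired.
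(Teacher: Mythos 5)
Your proposal is correct and takes essentially the same route as the paper: both reduce, via the projection formula, to showing that $f^*\eta$ is a nonzero pseudoeffective class on $X$ for every nonzero $\eta \in \Psef(Y)$, and then apply the interior hypothesis on $\ga$. The paper's proof is a three-line version that takes for granted the two facts you spell out explicitly, namely that $f^*$ maps $\Psef(Y)$ into $\Psef(X)$ and that $f^*$ is injective on $H^2$ (which you correctly deduce from Lemma~\ref{lem-fpods} together with the purity of $H^2(Y,\bQ)$).
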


\begin{proof}
For every $\gamma \in \Psef(Y) \bss\{0\}$, since $\ga \in \Int(\Psef(X)^\vee)$ and $f^*\gamma \in \Psef(X) \bss\{0\}$, we have $(f_*\ga) \cdot \gamma = \ga \cdot f^*\gamma > 0$. Hence $f_*\ga \in \Int(\Psef(Y)^\vee)$.
\end{proof}

The following result is the analog of Lemma~\ref{lem-pullbackKD} for pseudoeffective cone.

\begin{pro}\label{pro-pullback}
Let $X$ be a compact K\"ahler manifold, and let $\nu \colon \ti{X} \to X$ be the blow-up of $X$ along a submanifold $Y \subset X$. If\, $\Int(\Psef(X)^\vee) \cap H_2(X,\bQ) \ne \emptyset$, then $\Int(\Psef(\ti{X})^\vee) \cap H_2(\ti{X},\bQ) \ne \emptyset$.
\end{pro}

\begin{proof}
Fix $\ga \in \Int(\Psef(X)^\vee) \cap H^{2n-2}(X,\bQ)$.  Let $E = \nu^{-1}(Y)$ be the exceptional divisor, and let $\ell$ be a line in $\nu^{-1}(y)$ for some $y \in Y$. Then every element $\ti{\gamma} \in H^{1,1}(\ti{X},\bR)$ is of the form $\ti{\gamma} = \nu^*\gamma + r E$ for some $\gamma \in H^{1,1}(X,\bR)$ and $r \in \bR$.  If moreover $\ti{\gamma} \in \Psef(\ti{X})$, then $\gamma = \nu_*\ti{\gamma}$ is also pseudoeffective.  Therefore to prove the proposition, it suffices to find a $q \in \bQ_{>0}$ such that $(\nu^*\ga - q \ell) \cdot (\nu^*\gamma + r E) > 0$ for every $\gamma \in \Psef(X)$ and $r \in \bR$ such that $\nu^*\gamma + r E \in \Psef(\ti{X}) \bss \{0\}$.

Fix a norm $\|\cdot\|$ on $H^2(X,\bR)$, and let
$$\Psef(X)_1 = \Psef(X) \cap \left\{\gamma \in H^2(X,\bR) \mid \| \gamma \| = 1 \right\}.$$ 
For every $\gamma \in \Psef(X)_1$, let $r_\gamma = \inf \left\{r \in \bR \mid \nu^*\gamma + r E \in \Psef(\ti{X}) \right\}$. Since $\nu^*\gamma \in \Psef(\ti{X})$, we have $r_{\gamma} \le 0$. As $\ga \cdot \gamma > 0$ for every $\gamma \in \Psef(X)_1$ and both $\gamma \mapsto \ga \cdot \gamma$ and $\gamma \mapsto r_\gamma$ are continuous functions defined on the compact set $\Psef(X)_1$, there exists a $q \in \bQ_{>0}$ such that
$$\ga \cdot \gamma + q r_\gamma >0$$ 
for all $\gamma \in \Psef(X)_1$.

Now let $\gamma \in \Psef(X)$ and $r \in \bR$ be such that $\nu^*\gamma + r E \in \Psef(\ti{X}) \bss \{0\}$. If $\gamma = 0$, then $r > 0$, so
$$(\nu^*\ga - q \ell) \cdot (r  E) = qr > 0.$$
If $\gamma \ne 0$, then $\frac{r}{\|\gamma \| } \ge r_{\gamma / \|\gamma \|}$, so we also have
	\begin{equation*}\pushQED{\qed}
(\nu^*\ga - q \ell) \cdot (\nu^*\gamma + r  E) = \ga \cdot \gamma + qr = \|\gamma \| \(\ga \cdot \frac{\gamma}{\|\gamma \| }+ q  \frac{r}{\|\gamma \| } \) \ge \|\gamma \| \(\ga \cdot \frac{\gamma}{\|\gamma \| }+ q  r_{\gamma / \|\gamma \|} \)  > 0.\qedhere \popQED
	\end{equation*}
\renewcommand{\qed}{}     
\end{proof}

\begin{rem}
In the setting of Proposition~\ref{pro-pullback}, it is not true that $\ga \in \Int\(\Psef(X)^\vee\)$ implies $\nu^*\ga \in \Int(\Psef(\ti{X})^\vee)$ (which is already false when $\nu$ is the blow-up of $\bP^2$ along a point).  However, note that for any generically finite surjective morphism $f \colon X \to Y$ between compact K\"ahler manifolds, since $f_*\Psef(X) \subset \Psef(Y)$, we always have $f^*\Psef(Y)^\vee \subset \Psef(X)^\vee$.
\end{rem}

As an immediate consequence of Lemma~\ref{lem-push} and Proposition~\ref{pro-pullback}, we have the following.

\begin{cor}\label{cor-dompseudu}
Let $f \colon X \dto Y$ be a dominant meromorphic map between compact K\"ahler manifolds.  If\, $X$ satisfies the dual Kodaira condition (P), then so does $Y$.
\end{cor}

\begin{proof}
Let $X \xlto{\nu} \ti{X} \xto{\ti{f}} Y$ be a resolution of $f$ by a sequence of blow-ups $\nu \colon \ti{X} \to X$ along smooth centers. By Proposition~\ref{pro-pullback}, there exists an $\ti{\ga} \in \Int(\Psef(\ti{X})^\vee) \cap H_2(\ti{X},\bQ)$.  We conclude by Lemma~\ref{lem-push} that $\ti{f}_*\ti{\ga} \in \Int(\Psef(Y)^\vee) \cap H_2(Y,\bQ)$.
\end{proof}

\section{Smooth torus fibrations}\label{sec-fibtlis}

In this section, we study the Oguiso--Peternell problem for smooth torus fibrations. The argument involves the Deligne cohomology in an essential way, and the reader is referred to, \eg, \cite[Sections~2 and~3]{EZnormfct} for a reference.  See also~\cite[Section~2]{ClaudonToridefequiv}.

Let $f \colon X \to B$ be a smooth torus fibration such that $X$ and $B$ are compact K\"ahler manifolds, and let $g = \dim X - \dim B$.  Recall that the (absolute) Deligne complex $\ul{D}_X(g)$ is defined as
$$\cdots 0 \lra \bZ_X \xto{\times (2\pi \sqrt{-1})^g} \cO_X \xto{\;\;d\;\;} \gO_X^1 \xto{\;\;d\;\;} \cdots \xto{\;\;d\;\;} \gO_X^{g-1} \lra 0 \lra \cdots,$$
where $\bZ_X$ is placed at the $\supth{0}$ degree. 
The Deligne cohomology group of degree $2g$ is defined by 
$H^{2g}_D(X,\bZ(g)) = \bH^{2g}(X,\ul{D}_{X}(g))$.
We have the short exact sequence of complexes
\begin{equation}\label{cplx-DX}
%\begin{tikzcd}[cramped]
0  \lra \gO_X^{\bullet \le g-1}[-1]  \lra \ul{D}_X(g) \lra \bZ_X \lra 0
%\end{tikzcd}
\end{equation}
which gives rise to the regulator map 
\begin{equation}\label{map-reg}
\cl \colon H^{2g}_D(X,\bZ(g)) = \bH^{2g}(X,\ul{D}_{X}(g)) \lra H^{2g}(X,\bZ)
\end{equation}
with image equal to 
$$\ker\(H^{2g}(X,\bZ) \lra \bH^{2g}(X,\gO_X^{\bullet \le g-1})\) = H^{g,g}(X,\bZ).$$ 

Similarly, the relative Deligne complex $\ul{D}_{X/B}(g)$ is defined as
$$\cdots 0 \lra \bZ_X \xto{\times (2\pi \sqrt{-1})^g} \cO_{X/B} \xto{\;\;d\;\;} \gO_{X/B}^1 \xto{\;\;d\;\;} \cdots \xto{\;\;d\;\;} \gO_{X/B}^{g-1} \lra 0 \lra \cdots,$$
and we have the short exact sequence of complexes
\begin{equation}\label{cplx-DXB}
%\begin{tikzcd}[cramped]
0  \lra \gO_{X/B}^{\bullet \le g-1}[-1]  \lra \ul{D}_{X/B}(g) \lra \bZ_X \lra 0.
%\end{tikzcd}
\end{equation}
Applying $Rf_*$ to~\eqref{cplx-DXB}, together with the vanishing $R^{2g}f_*\gO^{\bullet \le g-1}_{X/B} = 0$ (because $f$ is smooth of relative dimension $g$), we obtain a short exact sequence
\begin{equation}\label{SE-DeligneJac}
%\begin{tikzcd}[cramped]
0  \lra \cJ  \lra R^{2g}f_*\ul{D}_{X/B}(g) \lra 
R^{2g}f_*\bZ_X \simeq \bZ_B \lra 0, 
%\end{tikzcd}
\end{equation}
where 
$$\cJ \cnec \coker \(R^{2g-1}f_*\bZ_X \lra R^{2g-1}f_*\gO^{\bullet \le g-1}_{X/B}\).$$

The natural map $\gO_{X}^{\bullet} \to \gO_{X/B}^{\bullet}$ of de Rham complexes induces a morphism of exact sequences from~\eqref{cplx-DX} to~\eqref{cplx-DXB}, which further induces the commutative diagram
 \begin{equation}\label{SE-DeligneJaccomm}
\begin{tikzcd}[cramped]
 H^{2g}_D(X,\bZ(g)) \ar[d] \ar[r, twoheadrightarrow, "\cl"] & H^{g,g}(X,\bZ) \ar[d, "f_*"]   \\
H^0(B, R^{2g}f_*\ul{D}_{X/B}(g)) \arrow[r] & H^0\(B,\bZ\)  \ar[r,"\gd"] & H^1(B, \cJ)\rlap{,}
\end{tikzcd}
\end{equation}
where the vertical arrow on the left is the composition of $H^{2g}_D(X,\bZ(g)) \to \bH^{2g}(X,\ul{D}_{X/B}(g))$ induced by the natural map $\ul{D}_X(g) \to \ul{D}_{X/B}(g)$ with $\bH^{2g}(X,\ul{D}_{X/B}(g)) \to H^0(B, R^{2g}f_*\ul{D}_{X/B}(g))$ and the second row is an exact sequence induced by~\eqref{SE-DeligneJac}.

Finally, the sheaf $\cJ$ is isomorphic to the sheaf of germs of sections of the Jacobian fibration $p \colon J \to B$ associated to $f$.  Set $\eta(f) \cnec \gd(1) \in H^1(B, \cJ)$. This defines a bijection
\begin{equation}\label{corr-Jtors}
\eta \colon \Set{ \text{Isomorphism classes of } J\text{-torsors} }   \elra H^1(B, \cJ),
\end{equation}
and $\eta(f)$ is torsion if and only if $f$ has a multi-section (which can be chosen \'etale over $B$); \cf~\cite[Propositions 2.1 and 2.2]{ClaudonToridefequiv}.

\begin{lem}\label{lem-fibTlis}
Let $f \colon X \to B$ be a smooth torus fibration of relative dimension $g$ over a compact complex manifold.  If $f_* \colon H^{g,g}(X,\bQ) \to H^0(B,\bQ)$ is surjective, then $f$\! has an \'etale multi-section.
\end{lem}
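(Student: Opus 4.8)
The plan is to prove that the class $\eta(f) = \gd(1) \in H^1(B,\cJ)$ classifying $f$ as a $J$-torsor is torsion; by the correspondence~\eqref{corr-Jtors} and the criterion recalled right after it, this is exactly the assertion that $f$ admits an \'etale multi-section. We may assume $B$ is connected, so that $H^0(B,\bZ) = \bZ$ with canonical generator $1$ (otherwise one argues on each connected component of $B$ separately).

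First I would manufacture an integral Hodge class realizing the hypothesis. Since $f_* : H^{g,g}(X,\bQ) \to H^0(B,\bQ) = \bQ$ is surjective and $H^{g,g}(X,\bQ) \simeq H^{g,g}(X,\bZ) \otimes_\bZ \bQ$, clearing denominators produces a class $\gb \in H^{g,g}(X,\bZ)$ together with an integer $n > 0$ such that $f_*\gb = n \in H^0(B,\bZ)$. Because the regulator map~\eqref{map-reg} is surjective onto $H^{g,g}(X,\bZ)$, I may then lift $\gb$ to a Deligne class $\ga \in H^{2g}_D(X,\bZ(g))$ with $\cl(\ga) = \gb$.

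Next I would run the diagram chase in~\eqref{SE-DeligneJaccomm}. The image of $\ga$ under the left vertical map is an element of $H^0(B, R^{2g}f_*\ul{D}_{X/B}(g))$ whose image under the bottom horizontal map equals $f_*\cl(\ga) = f_*\gb = n$, by commutativity of the diagram. Thus $n \in H^0(B,\bZ)$ lies in the image of $H^0(B, R^{2g}f_*\ul{D}_{X/B}(g)) \to H^0(B,\bZ)$, and exactness of the bottom row forces $\gd(n) = 0$. Since $\gd$ is a homomorphism and $n = n \cdot 1$, this reads $n\,\eta(f) = n\,\gd(1) = \gd(n) = 0$, so $\eta(f)$ is torsion and we conclude.

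The only genuinely nontrivial inputs are the surjectivity of the regulator onto $H^{g,g}(X,\bZ)$ and the commutativity of~\eqref{SE-DeligneJaccomm} relating the absolute and relative Deligne cohomologies; both are already established. After that the argument is purely formal, the key observation being that a regulator lift of an integral Hodge class $\gb$ with $f_*\gb = n$ furnishes precisely a lift of $n \in H^0(B,\bZ)$ through the relative Deligne cohomology, which by exactness kills $n\,\eta(f)$. The one point to handle with care is the passage from the rational surjectivity of $f_*$ to an \emph{integral} class $\gb$ with $f_*\gb$ a positive multiple of the generator, which is exactly where the identity $H^{g,g}(X,\bQ) \simeq H^{g,g}(X,\bZ)\otimes_\bZ\bQ$ intervenes.
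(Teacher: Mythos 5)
Your proof is correct and follows essentially the same route as the paper's own argument: both reduce the statement to showing $\eta(f) = \gd(1)$ is torsion, produce an integral class $\gb \in H^{g,g}(X,\bZ)$ with $f_*\gb = n \ne 0$ via the identity $H^{g,g}(X,\bQ) \simeq H^{g,g}(X,\bZ) \otimes_\bZ \bQ$, lift it through the surjective regulator in the diagram~\eqref{SE-DeligneJaccomm}, and use exactness of the bottom row to conclude $n\,\eta(f) = \gd(n) = 0$. The only difference is that you make explicit the diagram chase and the reduction to connected $B$, which the paper leaves implicit.
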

 
\begin{proof}
It suffices to show that $\eta(f)$ is torsion.  Since $f_* \colon H^{g,g}(X,\bQ) \to H^0(B,\bQ)$ is surjective, there exists an $\ga \in H^{g,g}(X,\bZ)$ such that $f_*\ga \in H^0(B,\bZ)$ is non-zero. By~\eqref{SE-DeligneJaccomm}, $f_*\ga$ lifts to an element of $H^0(B, R^{2g}f_*\ul{D}_{X/B}(g))$, so $\gd(f_*\ga) = 0$.  Hence $\eta(f)= \gd(1)$ is torsion.
\end{proof}

\begin{pro}\label{pro-fibTlis}
Let $X_0$ be a compact K\"ahler manifold of dimension $n$ such that $\Int(\cK(X_0)^\vee)$ contains a rational class $\ga \in H^{2n-2}(X_0,\bQ)$.  Assume that $X_0$ is bimeromorphic to $X/G$, where $G$ is a finite group and $X$ is the total space of a $G$-equivariant smooth torus fibration $f \colon X \to B$ over a smooth curve $B$. Then $f$\! has an \'etale multi-section.
\end{pro}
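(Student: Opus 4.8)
The plan is to reduce the statement, via Lemma~\ref{lem-fibTlis}, to a surjectivity statement about the Gysin map and then feed in the positivity coming from $X_0$. Since $B$ is a curve, the relative dimension is $g = n-1$, so Lemma~\ref{lem-fibTlis} tells us that it suffices to prove that $f_* : H^{n-1,n-1}(X,\bQ) \to H^0(B,\bQ) \cong \bQ$ is surjective. As the target is one-dimensional, this amounts to exhibiting a single rational class $\ga' \in H^{n-1,n-1}(X,\bQ)$ with $f_*\ga' \ne 0$. By the projection formula (Proposition~\ref{pro-formproj}) and Poincar\'e duality on the curve $B$, one has $f_*\ga' = \ga' \cdot [F]$, where $[F] = f^*[\mathrm{pt}]$ is the class of a fibre; note that $[F]$ is a nonzero nef class on $X$, being the pullback of an ample class on $B$. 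So I must produce a rational $(n-1,n-1)$-class on $X$ pairing nontrivially with $[F]$, and the natural candidate is a rational class in $\Int(\cK(X)^\vee)$, which automatically pairs strictly positively with the nonzero nef class $[F]$.

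To obtain such a class I would transport the given rational class $\ga \in \Int(\cK(X_0)^\vee)$ to $X$ through the quotient $\ol{X} \cnec X/G$. Here $\ol{X}$ has only quotient singularities, hence is a rational homology manifold, so Poincar\'e duality and the projection formula (Proposition~\ref{pro-formproj}) are available, and its K\"ahler and dual K\"ahler cones are defined via singular K\"ahler metrics as in \S\ref{sec-prelim}. First I would resolve the bimeromorphic map $X_0 \dto \ol{X}$ by a compact K\"ahler manifold $Y$ admitting bimeromorphic morphisms $\nu : Y \to X_0$ and $\rho : Y \to \ol{X}$ (one may take $Y$ to be obtained from $X_0$ by blow-ups along smooth centres, so that $Y$ is K\"ahler). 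Corollary~\ref{cor-domduK} applied to $X_0 \dto Y$ then yields a rational class $\ga_Y \in \Int(\cK(Y)^\vee)$. Setting $\ol{\ga} \cnec \rho_*\ga_Y$, for every nonzero nef class $\ol{\go}$ on $\ol{X}$ the pullback $\rho^*\ol{\go}$ is a nonzero nef class on $Y$ (pullback of a nef class under a resolution is nef, and is nonzero since $\rho_*\rho^* = \mathrm{id}$), so the projection formula gives $\ol{\ga} \cdot \ol{\go} = \ga_Y \cdot \rho^*\ol{\go} > 0$; hence $\ol{\ga} \in \Int(\cK(\ol{X})^\vee) \cap H^{n-1,n-1}(\ol{X},\bQ)$.

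Next I would pull $\ol{\ga}$ back to $X$ along the quotient map $\pi : X \to \ol{X}$ and set $\ga' \cnec \pi^*\ol{\ga}$, a $G$-invariant rational class in $H^{n-1,n-1}(X,\bQ)$. To see that $\ga' \in \Int(\cK(X)^\vee)$, let $\go$ be any nonzero nef class on $X$ and form its $G$-average $\go^G \cnec \frac{1}{|G|}\sum_{g \in G} g^*\go$, which is $G$-invariant, nef, and nonzero (the nef cone is salient). By Lemma~\ref{lem-pullbackKahG}, $\go^G = \pi^*\ol{\go}$ for a nonzero nef class $\ol{\go}$ on $\ol{X}$. Since each $g$ is an orientation-preserving diffeomorphism and $\ga'$ is $G$-invariant, we have $\ga' \cdot \go = \ga' \cdot \go^G$, and the projection formula gives $\ga' \cdot \go^G = \pi^*\ol{\ga} \cdot \pi^*\ol{\go} = |G|\,(\ol{\ga} \cdot \ol{\go}) > 0$. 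Thus $\ga' \in \Int(\cK(X)^\vee)$; in particular $\ga' \cdot [F] > 0$, so $f_*\ga' \ne 0$ and $f_*$ is surjective. Lemma~\ref{lem-fibTlis} then produces an \'etale multi-section of $f$.

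The delicate point is the transport of the interior rational class across the singular quotient $\ol{X}=X/G$: I expect the main work to lie in justifying the cone formalism on the normal space $\ol{X}$ (Poincar\'e duality, the projection formula, and the Varouchas description of singular K\"ahler metrics, together with the fact that the pullback of a nef class under a resolution is nef) and in checking that the possibly ramified finite map $\pi$ preserves the interior of the dual K\"ahler cone, which is exactly what the $G$-averaging argument combined with Lemma~\ref{lem-pullbackKahG} secures. A minor preliminary point to record is that $X$, being a finite quotient presentation of a model bimeromorphic to the K\"ahler manifold $X_0$, is itself compact K\"ahler, so that $\cK(X)$ and $\cK(X)^\vee$ are defined and Lemma~\ref{lem-pullbackKahG} applies.
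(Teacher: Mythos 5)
Your overall skeleton matches the paper's: reduce via Lemma~\ref{lem-fibTlis} to producing one rational class in $H^{n-1,n-1}(X,\bQ)$ with nonzero pushforward to $H^0(B,\bQ)$, and source the positivity from a rational class in the interior of a dual K\"ahler cone on a K\"ahler model, obtained through Proposition~\ref{pro-pullbackKD}/Corollary~\ref{cor-domduK}. The gap lies in your transport of that class to $X$: you assert that $X$ is compact K\"ahler because it is ``a finite quotient presentation of a model bimeromorphic to the K\"ahler manifold $X_0$''. This is a non sequitur. Being bimeromorphic to a compact K\"ahler manifold only places $X/G$ (hence $X$) in the Fujiki class $\cC$, and class $\cC$ does not imply K\"ahler (non-projective Moishezon manifolds are the classical counterexamples). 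The proposition deliberately makes no K\"ahler assumption on $X$: it is applied in the proof of Theorem~\ref{thm-mainOPK} to case iv) of Proposition~\ref{pro-class}, and that proposition explicitly guarantees K\"ahler models only in cases i) and iii) --- in case iv) the torus fibration $\ti{X} \to \ti{B}$ has no multi-section, so its total space is exactly the kind of manifold one should not expect to be K\"ahler. Without K\"ahlerness of $X$, your argument fails at every step where it is invoked: Lemma~\ref{lem-pullbackKahG} is stated only for quotients of compact K\"ahler manifolds; the nef cone of $X$ may be trivial, so the claims that $[F]$ is a nonzero nef class, that the $G$-average $\go^G$ of a nonzero nef class is nonzero (salience), and that classes in $\Int(\cK(X)^\vee)$ pair strictly positively with nonzero nef classes all lose their force; and the conclusion $\ga' \in \Int(\cK(X)^\vee)$ becomes vacuous rather than useful. (Granting K\"ahlerness of $X$, the rest of your argument is essentially sound, modulo routine checks of the cone formalism on the normal space $X/G$; but that hypothesis is precisely what is unavailable.)

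The paper's proof is engineered to avoid putting any positivity on $X$ or on $X/G$. It takes a compact K\"ahler resolution $\ti{X}'$, with $p : \ti{X}' \to X$ and $q : \ti{X}' \to \ti{X}$, of the generically finite map $X \to X/G \dto \ti{X}$ (this exists because $X$, being a finite cover of $X/G \in \cC$, lies in class $\cC$, even though it need not be K\"ahler), sets $\ga \colonec p_*q^*\gb$ where $\gb \in \Int(\cK(\ti{X})^\vee)$ is rational, and verifies $f_*\ga \ne 0$ purely functorially by pushing down to the quotient curve: $r_*f_*p_*q^*\gb = \ti{f}_*q_*q^*\gb = \deg(q)\cdot \ti{f}_*\gb$, which is nonzero by~\cite[Proposition 2.5]{OguisoPeternellconeKahdual} applied to the surjection from the K\"ahler manifold $\ti{X}$ onto $B/G$. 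Positivity is thus only ever invoked on honest compact K\"ahler manifolds and on the curve $B/G$. To repair your proof, you would have to either establish that your specific $X$ is K\"ahler (not available, and not expected in the intended application) or replace the passage through the cones of $X$ and $X/G$ by such a cohomological computation --- at which point your argument becomes the paper's.
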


\begin{proof}
Let $X_0 \lto \ti{X} \to X/G$ be a resolution of a bimeromorphic map $X_0 \dto X/G$ by a sequence of blow-ups $\ti{X} \to X_0$ along smooth centers.  By Lemma~\ref{lem-pullbackKD}, $\Int(\cK(\ti{X})^\vee)$ contains a rational class $\gb$.  Let $X \xleftarrow{p} \ti{X}' \xto{q} \ti{X}$ be a resolution of the meromorphic map $X \to X/G \dto \ti{X}$ by a compact K\"ahler manifold~$\ti{X}'$. The situation is summarized in the commutative diagram
$$
\begin{tikzcd}[cramped]
\ti{X}'  \ar[r, "p"] \ar[d, "q"] & X \ar[r, "f"]\ar[d] & B \ar[d,"r"] \\
\ti{X} \ar[r] \ar[rr, bend right=25, swap, "\ti{f}"]  & X/G \ar[r] & B/G\rlap{.}
\end{tikzcd}
$$
With the notation therein,
$$r_*f_*p_*q^*\gb = \ti{f}_*q_*q^*\gb = \deg(q) \cdot \ti{f}_* \gb \ne 0 \in H^0(B/G,\bQ),$$
where the non-vanishing follows from~\cite[Proposition 2.5]{OguisoPeternellconeKahdual}. In particular, if $\ga \cnec p_*q^*\gb $, then $f_* \ga \ne 0$ in $H^0(B,\bQ)$. Therefore, $f_* \colon H^{n-1,n-1}(X,\bQ) \to H^0(B,\bQ)$ is surjective, and we apply Lemma~\ref{lem-fibTlis} to conclude.
\end{proof}

\section{Algebraicity of the Albanese torus}\label{sec-tores}

The aim of this section is to prove Theorem~\ref{thm-alb}, answering Problem~\ref{prob-OP} in terms of the Albanese torus for every compact K\"ahler manifold.

\looseness=-1 First we study the Oguiso--Peternell problem for complex tori (and their finite quotients for later use).

\begin{pro}\label{pro-T}
Let $T$ be a complex torus of dimension $n$ and $G$ a finite group acting on $T$. If there exists a $\gb \in H^{n-1,n-1}(T,\bQ)^G$ such that $\gb \cdot \go \ne 0$ for every $\go \ne 0 \in \ol{\cK(T)}^G$, then $T$ is projective.
\end{pro}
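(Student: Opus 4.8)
The plan is to reduce the statement to the Riemann bilinear relations by identifying $\gb$ with a Hermitian form on the dual vector space and showing that this form must be positive definite. Write $T = V/\gL$ with $V$ a complex vector space of dimension $n$ and $\gL \subset V$ a lattice, so that $H^{1,0}(T) = V^*$ and $H^{p,q}(T) = \wedge^p V^* \otimes \wedge^q \bar V^*$. Under the standard dictionary, $H^{1,1}(T,\bR)$ is the space $\mathrm{Herm}(V)$ of Hermitian forms on $V$, the K\"ahler cone $\cK(T)$ corresponds to the positive definite forms and the nef cone $\ol{\cK(T)}$ to the positive semidefinite ones (every K\"ahler class on $T$ contains a translation-invariant representative). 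Recall that $T$ is projective as soon as its dual torus $\hat T$ carries a polarization, i.e. a rational positive definite Hermitian form.

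The key identification is on the other side of the Poincar\'e pairing. Using the natural isomorphism $\wedge^{n-1}V^* \cong V \otimes \wedge^n V^*$ one obtains $H^{n-1,n-1}(T) \cong (V \otimes \bar V) \otimes H^{n,n}(T)$. Since $H^{n,n}(T)$ carries a canonical positive real generator, and since the isomorphism is defined over $\bQ$ (it comes from $\wedge^{n-1}\gL^\vee \cong \gL \otimes \wedge^n \gL^\vee$), this identifies $H^{n-1,n-1}(T,\bR)$ with $\mathrm{Herm}(V^*)$, carries the $\bQ$-structure of Hodge classes to rational Hermitian forms on $V^*$, is $G$-equivariant, and turns the Poincar\'e pairing into the trace pairing $\mathrm{Herm}(V) \times \mathrm{Herm}(V^*) \to \bR$. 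In particular $\gb$ becomes a $G$-invariant \emph{rational} Hermitian form $k$ on $V^*$, and for any $\xi \in V^*$ the rank-one form $\xi\bar\xi \in \mathrm{Herm}(V)$ satisfies $\gb \cdot (\xi\bar\xi) = k(\xi,\xi)$ up to a fixed positive constant.

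Next I would normalize the sign. As $\ol{\cK(T)}$ consists of positive semidefinite forms it is salient, hence so is $\ol{\cK(T)}^G$, and $\ol{\cK(T)}^G \bss \{0\}$ is connected; the continuous functional $\go \mapsto \gb \cdot \go$ thus has constant sign on it by hypothesis, and after replacing $\gb$ by $-\gb$ if necessary (which preserves all its properties) we may assume $\gb \cdot \go > 0$ for every $\go \in \ol{\cK(T)}^G \bss \{0\}$. Now comes the main point: for each $\xi \ne 0$ in $V^*$, the $G$-average
$$ h_\xi \cnec \sum_{g \in G} (g\xi)\,\overline{(g\xi)} \in \mathrm{Herm}(V) $$
is a nonzero $G$-invariant positive semidefinite form, hence a nonzero class of $\ol{\cK(T)}^G$; by $G$-invariance of $k$ and of the pairing, $\gb \cdot h_\xi$ equals $|G|\,k(\xi,\xi)$ up to the same positive constant. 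The hypothesis therefore forces $k(\xi,\xi) > 0$ for every $\xi \ne 0$, that is, $k$ is positive definite.

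Finally, a positive definite rational Hermitian form on $V^*$ is exactly a polarization on the dual torus $\hat T$, so $\hat T$ is an abelian variety, whence $T = \hat{\hat T}$ is projective. The conceptual crux is the $G$-averaging step, which manufactures from an arbitrary line in $V^*$ a nonzero $G$-invariant nef test class and thereby converts the nondegeneracy hypothesis on $\gb$ into the positive-definiteness of $k$. The main technical obstacle is rather the bookkeeping of the first two paragraphs: one must verify that the identification $H^{n-1,n-1}(T,\bR) \cong \mathrm{Herm}(V^*)$ respects the rational structures, so that the resulting form $k$ is genuinely rational and hence defines a polarization (an integral, not merely real, class) on $\hat T$.
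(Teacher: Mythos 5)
Your proof is correct, and it takes a genuinely different --- and substantially shorter --- route than the paper's. The paper argues in two stages: for $T$ simple it transports $\gb$ by the Fourier transform $\cF : H^{2n-2}(T,\bZ) \simeq H^2(\hat{T},\bZ)$ to a line bundle on $\hat{T}$, uses Pontryagin products and Poincar\'e's formula (Lemma~\ref{lem-Poincare}) to manufacture a line bundle $L'$ on $T$ with $c_1(L')^{n-1}$ proportional to $\gb$, and excludes mixed signature of $c_1(L')$ by pairing $\gb$ against one cleverly built K\"ahler form averaged over $G$; for $T$ non-simple it runs an induction on $\dim T$ through a fibration onto a simple quotient torus, which requires \'etale multi-sections (via the Deligne-cohomology Lemma~\ref{lem-fibTlis}) and Campana's criterion (Corollary~\ref{cor-critcamp}). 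You bypass the simple/non-simple dichotomy and the induction entirely: your contraction identification of $H^{n-1,n-1}(T,\bR)$ with Hermitian forms on $V^*$ is exactly the $(n-1,n-1)$-component of the paper's $\cF$, so it is indeed Hodge-compatible and defined over $\bQ$ --- though your parenthetical justification ``$\wedge^{n-1}\gL^\vee \cong \gL \otimes \wedge^n \gL^\vee$'' is not literally meaningful, since an individual Hodge summand carries no $\bQ$-structure; what is defined over $\bZ$ is the full Poincar\'e duality $\wedge^{2n-2}\gL^\vee \cong \wedge^2\gL \otimes \wedge^{2n}\gL^\vee$, whose complexification restricts to your contraction on the $(n-1,n-1)$-piece. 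Your key new idea is that the $G$-averages $h_\xi = \sum_{g \in G} g^*(\xi\bar{\xi})$, for $\xi \in V^* \setminus \{0\}$, form a whole family of nonzero classes of $\ol{\cK(T)}^G$ against which the hypothesis can be tested; since $\gb \cdot h_\xi$ equals $|G|\,k(\xi,\xi)$ up to a fixed positive constant by $G$-invariance of $\gb$, this forces $k$ to be positive definite after the sign normalization (which is legitimate: $\ol{\cK(T)}^G \setminus \{0\}$ is nonempty, by averaging a K\"ahler class, and connected, because the cone of positive semidefinite forms is salient), whence $\cF(\gb)$ is up to sign and a positive multiple a rational K\"ahler class on $\hat{T}$, and $\hat{T}$, hence $T$, is projective. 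In effect you promote the averaging trick --- which the paper applies only once, to a single test class built from a signature decomposition in its simple case --- to the full family of rank-one test classes, which is precisely what is needed to detect positive definiteness; this is what lets you dispense with Poincar\'e's formula, the Pontryagin calculus, the induction, the multi-section lemma, and Campana's criterion.
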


We will need the following lemma. 

\begin{lem}[Poincar\'e's formula]\label{lem-Poincare}
Let $L$ be a line bundle on the complex torus $T$ of dimension $n$.  Assume that
$$d \cnec \frac{c_1(L)^n}{n!} \ne 0,$$ 
and let 
$$c_L \cnec \frac{c_1(L)^{n-1}}{(n-1)! d} \in H^{2n-2}(T,\bQ).$$
Then for every integer $p \in [0,n]$, we have
$$\frac{c_1(L)^p}{p!} = d\frac{c_L^{\st n-p}}{(n-p)!} \in H^{2p}(T,\bQ),$$
where $\st$ is the Pontryagin product on $H^\bullet(T,\bQ)$.
\end{lem}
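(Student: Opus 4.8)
The plan is to prove this as a purely formal identity in the cohomology algebra of $T$, working over $\bQ$. Set $W \cnec H^1(T,\bQ)$. Recall that $H^\bullet(T,\bQ) = \bigwedge^\bullet W$ with cup product equal to the exterior product, while $H_\bullet(T,\bQ) = \bigwedge^\bullet W^*$ (with $W^* = H_1(T,\bQ)$) carries the Pontryagin product $m_*$, which under this identification is again the exterior product. The link between the two is Poincar\'e duality: the isomorphism $\PD \colon H^k(T,\bQ) \xrightarrow{\sim} H_{2n-k}(T,\bQ)$, $\alpha \mapsto \alpha \cap [T]$, intertwines the Pontryagin product $\st$ on cohomology with the exterior (Pontryagin) product on homology. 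Indeed this compatibility is, up to convention, the very definition of $\st$ on $H^\bullet(T,\bQ)$. I will use $\PD$ to transport the statement to homology, where Pontryagin powers become honest exterior powers and are trivial to compute.

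First I would fix a convenient basis. Since $d \ne 0$, the class $\omega \cnec c_1(L) \in \bigwedge^2 W$ is a nondegenerate alternating form, so there is a rational basis $x_1,\dots,x_n,y_1,\dots,y_n$ of $W$ with $\omega = \sum_{i=1}^n x_i \wedge y_i$. Let $e_1,\dots,e_n,f_1,\dots,f_n$ be the dual basis of $W^*$ and put $V \cnec e_1 \wedge f_1 \wedge \cdots \wedge e_n \wedge f_n$. The normalization is recorded by $\frac{\omega^n}{n!} = x_1 \wedge y_1 \wedge \cdots \wedge x_n \wedge y_n = d\,\theta$, where $\theta$ is the positive generator of $H^{2n}(T,\bZ)$; dually this forces $[T] = d\,V$ for the fundamental class.

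The computation then runs as follows. Expanding gives $\frac{\omega^p}{p!} = \sum_{|S|=p} \bigwedge_{i\in S}(x_i\wedge y_i)$, and contracting each summand against $[T] = d\,V$ deletes precisely the symplectic pairs indexed by $S$, so
$$\PD\!\left(\frac{\omega^p}{p!}\right) = d \sum_{|U|=n-p} \bigwedge_{j\in U}(e_j\wedge f_j).$$
Writing $\zeta \cnec \sum_j e_j \wedge f_j \in H_2(T,\bQ)$ and noting that the classes $e_j\wedge f_j$ are even, pairwise commute, and square to zero, one has $\zeta^{\wedge k} = k!\sum_{|U|=k}\bigwedge_{j\in U}(e_j\wedge f_j)$, whence $\PD(\frac{\omega^p}{p!}) = d\,\frac{\zeta^{\wedge(n-p)}}{(n-p)!}$. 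The case $p=n-1$ gives $\PD(c_L) = \PD(\frac{\omega^{n-1}}{(n-1)!\,d}) = \zeta$. Finally, since $\PD$ carries $\st$ to the exterior Pontryagin product, $\PD(c_L^{\st(n-p)}) = \zeta^{\wedge(n-p)}$, so $\PD(\frac{\omega^p}{p!}) = \frac{d}{(n-p)!}\PD(c_L^{\st(n-p)})$, and injectivity of $\PD$ yields the claimed identity $\frac{c_1(L)^p}{p!} = d\,\frac{c_L^{\st(n-p)}}{(n-p)!}$.

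The main obstacle is bookkeeping rather than conceptual: one must pin down conventions so that $\PD$ genuinely intertwines the cohomological Pontryagin product $\st$ with the exterior product on $\bigwedge^\bullet W^*$, and check the signs in the iterated contractions $\iota_{\bigwedge_{i\in S}(x_i\wedge y_i)}V$ (each complete symplectic pair contracts with sign $+1$, and distinct pairs occupy independent blocks, so the total sign is $+1$). The normalization $[T]=d\,V$ relating the rational symplectic basis to the integral fundamental class must also be handled carefully, since it is precisely what produces the factor $d$ in the final formula.
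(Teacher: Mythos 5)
Your proof is correct, and at its core it is the same computation as the paper's: choose a symplectic basis, expand the wedge powers explicitly, and use that Poincar\'e duality intertwines the Pontryagin product $\st$ on cohomology with the exterior product on homology (the paper uses exactly this compatibility, in the form $\PD(c_L^{\st (n-p)}/(n-p)!) = \PD(c_L)^{\st (n-p)}/(n-p)!$, without further comment). What is genuinely different is the normalization, and yours buys something real. The paper works with the \emph{integral} symplectic basis adapted to $L$, so it must import the structure theory of line bundles on complex tori from Birkenhake--Lange: the type $(d_1,\ldots,d_n)$, the expression $c_1(L) = -\sum_i d_i\, dx_i \wedge dy_i$, and the identity $d = (-1)^s d_1\cdots d_n$ with $s$ the number of negative eigenvalues of the Hermitian form of $L$; the elementary divisors $d_i$ and the sign $(-1)^s$ then have to be carried through every line of the computation. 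You instead take a \emph{rational} Darboux basis --- available because a nondegenerate alternating form over a field always admits a symplectic basis, though not over $\bZ$, which is precisely why the paper sees elementary divisors --- so that $c_1(L) = \sum_i x_i\wedge y_i$ on the nose, and the entire normalization is concentrated in your single relation $[T] = d\,V$. This eliminates all the bookkeeping and makes visible that the lemma is purely topological: your argument proves the identity for any class $\omega \in H^2(T,\bQ)$ with $\omega^n \neq 0$, the line bundle structure playing no role. The cost is the convention-checking you yourself flag (cap-product signs and multiplicativity of $\PD$ for $\st$), but since every class involved has even degree these signs are harmless, and the paper's proof rests on the identical compatibility, so nothing is lost.
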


This lemma is well known; we provide a proof for the sake of completeness.

\begin{proof}
Let $\gl_1,\ldots,\gl_n,\mu_1,\ldots,\mu_n$ be a symplectic basis of $H_1(T,\bZ)$ for $L$ (see~\cite[Section~3.1]{CabV}), and let $dx_1,\ldots,dx_n, dy_1,\ldots, dy_n$ be the associated dual basis of $H^1(T,\bZ)$.  Assume that $L$ is of type $(d_1,\ldots,d_n)$.  By~\cite[Lemma 3.6.4]{CabV}, we have
\begin{equation}\label{eqn-c1L}
	c_1(L) = -\sum_{i= 1}^n d_i \cdot dx_i \wedge dy_i
\end{equation}
and also 
\begin{equation}\label{eqn-d}
	d = (-1)^s d_1\cdots d_n
\end{equation}
by~\cite[Theorem 3.6.1]{CabV}, where $s$ is number of negative eigenvalues of the Hermitian form associated to $L$.  A computation thus shows that
\begin{equation}\label{eqn-cL}
c_L =  \frac{c_1(L)^{n-1}}{(n-1)! d} 
= \sum_{i=1}^n \frac{(-1)^{s+n-1}}{d_i}\wh{dx_i \wedge dy_i},
\end{equation}
where 
$$\wh{dx_i \wedge dy_i} \cnec( dx_1 \wedge dy_1 )\wedge \cdots \wedge (dx_{i-1} \wedge dy_{i-1} )
\wedge (dx_{i+1} \wedge dy_{i+1})  \wedge \cdots \wedge
(dx_n \wedge dy_n).$$

Let $\PD\colon H^\bullet(T,\bZ) \to H_{2n-\bullet}(T,\bZ) $ denote the Poincar\'e duality morphism.  For every subset $I$ of $[1, n] \cap \bZ$, we deduce from~\cite[Lemmas~3.6.5 and~4.10.1]{CabV} that
\begin{equation}\label{eqn-PD}
\PD\(\bigwedge_{i \in  I}(dx_i \wedge dy_i)\) = (-1)^{n+s} 
\underset{{i \in  I^\circ}}{\mathlarger{\mathlarger{\bigstar}}} (\gl_i \star \mu_i), 
\end{equation}
where $I^\circ \cnec [1, n] \cap \bZ \bss I$.  It follows from~\eqref{eqn-cL} and~\eqref{eqn-PD} that
$$ \PD\(\frac{c_L^{\st n-p}}{(n-p)!}\) =\frac{\PD(c_L)^{\st n-p}}{(n-p)!} = (-1)^{n-p} \sum_{I} \(\prod_{i \in I} d_i \)^{-1} \underset{{i \in I}}{\mathlarger{\mathlarger{\bigstar}}} (\gl_i \star \mu_i),$$
where $I $ in the sum runs through all subsets of $[1, n] \cap \bZ$ of cardinal $n-p$, and thus
$$ \frac{c_L^{\st n-p}}{(n-p)!} = (-1)^{s - p} \sum_{I} 
\(\prod_{i \in  I} d_i \)^{-1} \bigwedge_{i \in  I^\circ}(dx_i \wedge dy_i),$$
again by~\eqref{eqn-PD}.
Hence
$$d \cdot  \frac{c_L^{\st n-p}}{(n-p)!} = (-1)^{p} \sum_{I} 
\(\prod_{i \in  I^\circ} d_i \)  \bigwedge_{i \in  I^\circ}(dx_i \wedge dy_i)
= \frac{c_1(L)^p}{p!}$$
by~\eqref{eqn-d} and~\eqref{eqn-c1L}.
\end{proof}

\begin{proof}[Proof of Proposition~\ref{pro-T}]	
We can assume that the group action is trivial.  Indeed, if $\gb$ satisfies the assumption of Proposition~\ref{pro-T}, then the same $\gb$ satisfies
$$\gb \cdot \go = 
\frac{1}{|G|}\sum_{g \in G} g_*\gb \cdot \go 
=\gb \cdot \frac{1}{|G|}\sum_{g \in G} g^*\go \ne 0$$
for every $\go \ne 0 \in \ol{\cK(T)}$.  Up to replacing $\gb$ with some non-zero multiple of it, we can also assume that $\gb$ is the image of some element of $H^{n-1,n-1}(T,\bZ)$ (still denoted by $\gb$).

We first prove Proposition~\ref{pro-T} assuming $T$ is a simple torus (in the sense that $T$ does not contain any non-trivial sub-torus).  Let $n = \dim T$, and let $\hat{T}$ be the dual of $T$. Let
$$\cF \colon H^{2n-k}(T,\bZ) \elra H^k(\hat{T},\bZ)$$
be the Fourier transformation.  Up to sign, $\cF$ is the composition of the Poincar\'e duality morphism $H^{2n-k}(T,\bZ) \eto H^k(T,\bZ)^\vee$ with the isomorphism $H^k(T,\bZ)^\vee \eto H^k(\hat{T},\bZ)$ induced by the natural perfect pairing $H^1(T,\bZ) \otimes H^1(\hat{T},\bZ) \to \bZ$, \cf~\cite[Proposition 1]{BeauvilleTransfFM}, and it follows that
$$\cF(\gamma_1 \star \gamma_2) = \pm \cF(\gamma_1) \cdot \cF(\gamma_2) \quad \text{and} \quad \cF(\gamma_1 \cdot \gamma_2) = \pm \cF(\gamma_1) \star \cF(\gamma_2).$$
The map $\cF$ is an isomorphism of Hodge structures, so there exists a line bundle $L$ on $\hat{T}$ such that $c_1(L) = \cF(\gb)$; let $\phi_L \colon \hat{T} \to T$ be the homomorphism induced by $L$. Since $T$ is simple, $\ker(\phi_L)$ is either $\hat{T}$ or finite. In other words, either $c_1(L) = 0$, or $\phi_L$ is finite; \cf~\cite[Lemma 2.4.7]{CabV}.  As $\gb \ne 0$, we have $c_1(L) = \cF(\gb) \ne 0$.  It follows from~\cite[Corollary 3.6.2 and Theorem 3.6.3]{CabV} that $ \(\frac{c_1(L)^n}{n!}\)^2 = \deg \phi_L \ne 0$.  So $c_1(L)^n \ne 0$, and $\gb^{\st n} = \pm\cF^{-1}(c_1(L)^{n}) \ne 0$.  In particular, $\gb^{\st (n-1)} \ne 0$.

Let $L'$ be a line bundle over $T$ such that $c_1(L') = \gb^{\st(n-1)}$. By the same argument, we have $c_1(L')^n \ne 0$, so the Hermitian form $h$ on $H_1(T,\bC)$ which corresponds to $c_1(L')$ is non-degenerate. Let $(p,q)$ be the signature of $h$. There exist $dz_1,\ldots, dz_n \in H^1(T,\bC)$ such that $dz_1,\ldots, dz_n, d\bar{z}_1,\ldots, d\bar{z}_n$ form a basis of $H^1(T,\bC)$ and
$$c_1(L') =  \sqrt{-1} \sum_{j=1}^n c_j\,  dz_j \wedge d \bar{z}_j \in H^1(T,\bR)$$
with $c_1,\ldots, c_p > 0$ and $c_{p+1},\ldots, c_n < 0$. Define
$$\go \cnec \sqrt{-1} \( q \cdot \sum_{j=1}^p c_j\,  dz_j \wedge d \bar{z}_j -  p \cdot \sum_{j= p +1}^n c_j\,  dz_j \wedge d \bar{z}_j  \),$$ 
which is a K\"ahler form.

Assume that $p , q \ne n$. Then $\go \cdot c_1(L')^{n-1} = 0$ by an elementary computation.  Since $c_1(L') = \gb^{\st(n-1)}$ and $\gb^{\st n} \ne 0$, we have $\hat{\cF}(c_1(L')) = \pm \hat{\cF}(\gb)^{n-1}$ and $\hat{\cF}(\gb)^n \ne 0$, where $\hat{\cF} \colon H^\bullet(\hat{T},\bZ) \to H^\bullet(T,\bZ)$ denotes the Fourier transform of $\hat{T}$.  So by Lemma~\ref{lem-Poincare}, there exists a $C \in \bQ \bss \{0\}$ such that
$$\hat{\cF}(c_1(L')^{n-1}) = \pm \hat{\cF}(c_1(L'))^{\st (n-1)}
= \pm \(\hat{\cF}( \gb)^{n-1}\)^{\st (n-1)}  = C \hat{\cF}(\gb),$$ 
and therefore 
$$c_1(L')^{n-1} = C \gb.$$
It follows that
$$ \go \cdot c_1(L')^{n-1} = C \(\go \cdot \gb \) \ne 0,$$ 
where the non-vanishing follows from the assumption on $\gb$ and $\go \in \cK(T)$.  This contradicts the equality $\go \cdot c_1(L')^{n-1} = 0$.  Hence either $p = n$, or $q = n$, so either $L'$ or $L'^\vee$ is ample. Thus $T$ is projective.

Now we prove Proposition~\ref{pro-T} by induction on $\dim T$.  When $\dim T = 1$, $T$ is always projective. Assume that $\dim T > 1$ and that Proposition~\ref{pro-T} is proven for every complex torus of dimension strictly less than $\dim T$ endowed with a finite group action.  By what we have proven, we can assume that $T$ is not simple.  Then $T$ is the total space of a smooth isotrivial torus fibration $\pi \colon T \to T'$ over a simple complex torus $T'$ with $0 < \dim T' < \dim T$.  For every $\go' \in \ol{\cK(T')} \bss \{0\}$, we have $\pi^*\go'\in \ol{\cK(T)} \bss \{0\}$, so
$$\pi_*\gb \cdot \go' = \gb \cdot \pi^*\go'  \ne 0$$
by assumption.  Thus $T'$ is projective by the induction hypothesis.  Moreover, $\pi_*\gb \ne 0 \in H_2(T',\bZ)$, and since $T'$ is simple, the previous argument showing that $\gb^{\st n} \ne 0$ (when $T$ is simple) also proves that
$$\pi_*(\gb^{\st\dim T'}) = (\pi_*\gb)^{\st\dim T'} \ne 0 \in H^0(T',\bZ),$$
so $\pi$ has an \'etale multi-section $\gS \subset T$ by Lemma~\ref{lem-fibTlis}.  It follows that there exists a finite \'etale cover $\tau \colon \ti{T} \to T$, together with a surjective homomorphism $\pi' \colon \ti{T} \to F$ over a fiber $F \subset T$ of $\pi \colon T \to T'$.  For every $\go'' \ne 0 \in \ol{\cK(F)}$, we have $\tau_*\pi'^*\go'' \ne 0 \in \ol{\cK(T)}$, so
$$\pi'_*\tau^*\gb \cdot \go'' = \gb \cdot \tau_*\pi'^*\go'' \ne 0.$$ 
Thus $F$ is projective as well by the induction hypothesis.  Since $\pi \colon T \to T'$ is a fibration with a multi-section such that $T'$ and the fibers of $\pi$ are projective, by Corollary~\ref{cor-critcamp}, $T$ is also projective.
\end{proof}

Before we prove Theorem~\ref{thm-alb}, let us prove some immediate consequences of Proposition~\ref{pro-T}.

\begin{cor}\label{cor-T/G}
Let $X$ be a compact K\"ahler manifold which is bimeromorphic to the quotient $T/G$ of a complex torus $T$ by a finite group $G$.  If\, $X$ satisfies the dual Kodaira condition (K), then $X$ is projective.
\end{cor}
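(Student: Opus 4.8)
The plan is to manufacture, out of the positive rational class on $X$, a $G$-invariant rational class of type $(n-1,n-1)$ on the torus $T$ to which Proposition~\ref{pro-T} applies; once $T$ is shown to be projective, the projectivity of $X$ follows by a standard argument. First I would resolve the bimeromorphic map $X \dto T/G$: let $\mu : \ti{X} \to X$ be a composition of blow-ups along smooth centres, with $\ti{X}$ compact K\"ahler, such that the induced meromorphic map becomes a (bimeromorphic) morphism $g : \ti{X} \to T/G$. Iterating Proposition~\ref{pro-pullbackKD} along the blow-ups constituting $\mu$ produces a class $\gb_0 \in \Int(\cK(\ti{X})^\vee) \cap H^{2n-2}(\ti{X},\bQ)$.

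Next I would transport $\gb_0$ down to $T/G$ and then to $T$. Since $T/G$ has only quotient singularities it is a rational homology manifold, so the Gysin pushforward $g_*$ and the projection formula of Proposition~\ref{pro-formproj} are available; set $\delta \cnec g_*\gb_0$. As $g$ is bimeromorphic, $g_*$ preserves bidegree, so $\delta \in H^{n-1,n-1}(T/G,\bQ)$. For any nonzero nef class $\go'$ on $T/G$, its pullback $g^*\go'$ is a nonzero nef class on $\ti{X}$ (the map $g^*$ is injective because $g_* g^* = \Id$), whence the projection formula gives $\delta \cdot \go' = \gb_0 \cdot g^*\go' > 0$, the strict inequality coming from $\gb_0 \in \Int(\cK(\ti{X})^\vee)$. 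Now pull back along the finite quotient $\pi : T \to T/G$ and put $\gb \cnec \pi^*\delta$, a $G$-invariant class in $H^{n-1,n-1}(T,\bQ)^G$. By Lemma~\ref{lem-pullbackKahG} every nonzero $\go \in \ol{\cK(T)}^G$ is of the form $\pi^*\go'$ with $\go'$ nef on $T/G$, and $\go' \ne 0$ since $\pi^*$ is injective; the projection formula then yields $\gb \cdot \go = \pi^*\delta \cdot \pi^*\go' = |G|\,(\delta \cdot \go') > 0$. Thus $\gb$ satisfies the hypothesis of Proposition~\ref{pro-T}, and $T$ is projective.

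Finally, the quotient of a projective variety by a finite group is projective, hence $T/G$ is Moishezon; being bimeromorphic to $T/G$, the manifold $X$ is Moishezon as well, and a compact K\"ahler Moishezon manifold is projective. This concludes the argument.

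The step I expect to require the most care is the transfer of the class across the singular quotient $T/G$: one must verify that the Gysin image $\delta = g_*\gb_0$ genuinely lands in $H^{n-1,n-1}(T/G,\bQ)$ and pairs \emph{strictly} positively with the entire nef cone of the singular space $T/G$, and then that Lemma~\ref{lem-pullbackKahG} correctly identifies $\ol{\cK(T)}^G$ with $\pi^*\ol{\cK(T/G)}$, so that positivity of $\gb$ against $G$-invariant nef classes on $T$ is exactly positivity of $\delta$ against nef classes on $T/G$. The remaining input, namely the hard analytic content, is already packaged in Proposition~\ref{pro-T} and Lemma~\ref{lem-pullbackKahG}.
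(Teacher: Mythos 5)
Your proof is correct and takes essentially the same route as the paper's: resolve the bimeromorphic map via Proposition~\ref{pro-pullbackKD}, transport the class to $T$ as $\pi^*g_*\gb_0$ (the paper writes this composite directly as $q^*p_*\ga$, without naming the intermediate class on $T/G$), use Lemma~\ref{lem-pullbackKahG} together with the projection formula to pair strictly positively against every nonzero element of $\ol{\cK(T)}^G$, and conclude with Proposition~\ref{pro-T}. The only cosmetic difference is that you pause at the singular quotient $T/G$ and assert a Hodge type there, which the paper sidesteps by composing the Gysin pushforward and the pullback in one step and reading off the type only on the smooth torus $T$.
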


\begin{proof}
Let $n \cnec \dim X$.  Let $\ga \in \Int\(\cK(X)^\vee\) \cap H^{2n-2}(X,\bQ)$.  Fix a bimeromorphic map $p \colon X \dto T/G$. Up to resolving $p$ by successive blow-ups of $X$ along smooth centers, by Lemma~\ref{lem-pullbackKD} we can assume that $p$ is holomorphic.  Let $q \colon T \to T/G$ be the quotient map. For every $\go \ne 0 \in \ol{\cK(T)}^G$, there exists by Lemma~\ref{lem-pullbackKahG} a nef class $\go' \ne 0 \in H^2(T/G,\bR)$ such that $\go = q^*\go'$.  As $\go'$ is nef, the pullback $p^*\go'$ is also nef.  So if we set $\gb \cnec q^*p_*\ga \in H^{n-1,n-1}(T,\bQ)^G$, then since $\ga \in \Int\(\cK(X)^\vee\) \cap H^{2n-2}(X,\bQ)$, we have
\begin{equation}\label{eqn-pos}
\gb \cdot \go = q^*p_*\ga \cdot \go = q^*(p_*\ga \cdot \go') =  q^*p_*(\ga \cdot p^*\go') > 0,
\end{equation}
where the last equality follows from the projection formula (\cf~Proposition~\ref{pro-formproj}).  It follows from Proposition~\ref{pro-T} that $T$ is projective; hence $X$ is also projective.
\end{proof}

\begin{cor}\label{cor-b1}
Let $X$ be compact K\"ahler manifold which satisfies the dual Kodaira condition (K). If $a(X) = 0$, then $b_1(X) = 0$.
\end{cor}

\begin{proof}
It is equivalent to show that $\Alb(X)$ is a point. Since $a(X) = 0$, the Albanese map $X \to \Alb(X)$ is surjective and $a(\Alb(X)) = 0$; \cf~\cite[Lemma 13.1]{UenoClassAlgVar}. So the dual Kodaira condition (K) implies that $\Int\(\cK(\Alb(X))^\vee\)$ contains a rational class as well by Proposition~\ref{pro-domduK}.  It follows from Proposition~\ref{pro-T} that $\Alb(X)$ is projective; hence $\Alb(X)$ is a point.
\end{proof}

We finish this section with a proof of Theorem~\ref{thm-alb}.

\begin{proof}[Proof of Theorem~\ref{thm-alb}]
Let $\alb \colon X \to T$ be the Albanese map of $X$.

\begin{claim}
For every $[\go] \in \ol{\cK(T)} \bss \{0\}$, we have $\alb^*[\go] \in \ol{\cK(X)} \bss \{0\}$.
\end{claim}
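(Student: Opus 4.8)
The plan is to establish the two halves of the claim --- that $\alb^*[\go]$ is nef and that it is nonzero --- separately, the first being routine and the second being the real content. The main tool is the structure of cohomology on the torus $T \cnec \Alb(X)$: every class in $H^{1,1}(T,\bR)$ has a unique translation-invariant representative, and under this identification $H^{1,1}(T,\bR)$ is the space of Hermitian forms on $V \cnec T_0 T$, with $\cK(T)$ (resp.\ $\ol{\cK(T)}$) corresponding to the positive-definite (resp.\ positive semi-definite) forms.

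For the nef part, I would represent $[\go]$ by its invariant representative, still written $\go$, which is then a smooth closed semi-positive $(1,1)$-form. As $\alb$ is holomorphic, the pullback $\alb^*\go$ is again smooth, closed and semi-positive, and $[\alb^*\go] = \alb^*[\go]$. Fixing any K\"ahler form $\kappa$ on $X$, the form $\alb^*\go + \ep\kappa$ is K\"ahler for every $\ep > 0$, so letting $\ep \to 0$ exhibits $\alb^*[\go]$ as a limit of K\"ahler classes, hence $\alb^*[\go] \in \ol{\cK(X)}$.

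The crux is the non-vanishing, and I expect the main subtlety to be that one cannot argue by injectivity of $\alb^* : H^2(T,\bR) \to H^2(X,\bR)$, since the cup-product map $\bigwedge^2 H^1 \to H^2$ can have a kernel on a general compact K\"ahler manifold; the positivity of nef classes is what must be used instead. Diagonalizing the semi-definite Hermitian form attached to $\go$, I would write
$$\go = \sqrt{-1} \sum_{k=1}^r \lambda_k\, \xi_k \wedge \bar\xi_k, \qquad \lambda_k > 0,$$
with $\xi_1,\ldots,\xi_r \in H^0(T,\gO^1_T)$ linearly independent and $r \ge 1$ (because $\go \ne 0$). Putting $\eta_k \cnec \alb^*\xi_k$, the defining property of the Albanese map --- that $\alb^* : H^0(T,\gO^1_T) \to H^0(X,\gO^1_X)$ is an isomorphism --- shows the $\eta_k$ are linearly independent holomorphic $1$-forms, so in particular $\eta_1 \ne 0$, and
$$\alb^*\go = \sqrt{-1} \sum_{k=1}^r \lambda_k\, \eta_k \wedge \bar\eta_k.$$

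To finish I would pair with $\kappa^{n-1}$, where $n = \dim X$:
$$\int_X \alb^*\go \wedge \kappa^{n-1} = \sum_{k=1}^r \lambda_k \int_X \sqrt{-1}\, \eta_k \wedge \bar\eta_k \wedge \kappa^{n-1}.$$
Every summand is nonnegative, and the $k = 1$ term is strictly positive: $\sqrt{-1}\,\eta_1 \wedge \bar\eta_1 \wedge \kappa^{n-1}$ is a nonnegative top form which is strictly positive off the zero locus of $\eta_1$, and a nonzero holomorphic $1$-form vanishes only on a proper analytic subset. Hence $\int_X \alb^*\go \wedge \kappa^{n-1} > 0$, so $\alb^*[\go] = [\alb^*\go] \ne 0$. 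Once the two standard inputs --- the description of $\ol{\cK(T)}$ by semi-positive invariant forms and the isomorphism $\alb^* : H^0(T,\gO^1_T) \xrightarrow{\sim} H^0(X,\gO^1_X)$ --- are in place, the argument is routine.
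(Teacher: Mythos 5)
Your proof is correct, and it reaches the non-vanishing by a route that is dual to, though not identical with, the paper's. The paper also reduces to an invariant semi-positive representative $\go = \sqrt{-1}\sum_j c_j\, dz_j \wedge d\bar z_j$, but then argues by contradiction on the level of forms: if $\alb^*\go = 0$, then $\go(v\wedge\bar v)=0$ for every $v$ tangent to the Albanese image $Y = \alb(X)$ at its smooth points; since these tangent spaces, translated to the origin, generate $T_{T,o}$ (a geometric property of the Albanese image), semi-positivity forces $\go = 0$, a contradiction. You instead invoke the injectivity (in fact bijectivity) of $\alb^*\colon H^0(T,\gO^1_T)\to H^0(X,\gO^1_X)$, which gives a nonzero holomorphic $1$-form $\eta_1 = \alb^*\xi_1$ appearing in the diagonalization of $\alb^*\go$, and then conclude directly by integrating against $\kappa^{n-1}$. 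The two key inputs are essentially equivalent --- if the tangent spaces of $Y$ generated a proper subspace of $T_{T,o}$, any invariant $1$-form annihilating that subspace would pull back to zero, contradicting injectivity, and conversely --- but your version has the merit of being direct (no contradiction, no discussion of the smooth locus of $Y$) and of making explicit the pairing with $\kappa^{n-1}$ that the paper leaves implicit in the step ``it suffices to show $\alb^*\go \ne 0$'' as a form. The only point worth flagging is that your conclusion rests on the standard fact that a nonzero holomorphic $1$-form on a connected compact manifold vanishes only along a proper analytic subset, so that $\int_X \sqrt{-1}\,\eta_1\wedge\bar\eta_1\wedge\kappa^{n-1} > 0$; this is correct and needs no further justification.
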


\begin{proof}[Proof of the claim]
Clearly, $\alb^*[\go] \in \ol{\cK(X)}$. It remains to show that $\alb^*[\go] \ne 0$.  As $[\go]$ is nef, we can assume that $[\go]$ is represented by $\go = \sqrt{-1} \sum_{j=1}^n c_j\, dz_j \wedge d \bar{z}_j$ for some complex coordinates $(z_1,\ldots,z_n)$ of $H_1(T,\bC)$ and some $c_j \ge 0$ such that $c_j \ne 0$ for some $j = 1,\ldots,n$.  It follows that $\alb^*\go$ is semi-positive, so it suffices to show that $\alb^*\go \ne 0$.
	
Assume to the contrary that $\alb^*\go = 0$. Let $Y \cnec \alb(X)$, and let $Y^\circ \subset Y$ be a non-empty Zariski open over which $\alb \colon X \to Y$ is smooth.  Then for any point $y \in Y^\circ$ and any vector $v \in T_{Y,y}$, we have $\go(v \wedge \bar{v}) = 0$. Since $Y$ is the Albanese image of $X$, if we regard each $T_{Y,y}$ as a subspace of $T_{T,o}$ by translation, where $o \in T$ is a chosen origin of $T$, the subspaces $T_{Y,y}$ generate $T_{T,o}$, where $y$ runs through $Y^\circ$.  As $\go_{|o}$ is positive semi-definite, we thus have $\go_{|o} = 0$, contradicting the assumption that $\go \ne 0$.  Hence $\alb^*\go \ne 0$.
\end{proof}

Now let $\ga \in \Int\(\cK(X)^\vee\) \cap H^{2n-2}(X,\bQ)$.  By the above claim, we have $[\go] \cdot \alb_*\ga = \alb^*[\go] \cdot \ga > 0$ for every $[\go] \in \ol{\cK(T)} \bss \{0\}$.  We conclude by Proposition~\ref{pro-T} that $T$ is projective.
\end{proof}

\section{Ricci-flat manifolds}\label{sec-Rp}

In this section, we prove Theorem~\ref{thm-RP}, answering Problem~\ref{prob-OP} for Ricci-flat compact K\"ahler manifolds.  Let us start with the special case of hyper-K\"ahler manifolds.

\ssec{Hyper-K\"ahler manifolds}\label{ssec-HK}
%\hfill

In this article, a compact hyper-K\"ahler manifold is a simply connected compact K\"ahler manifold $X$ such that $H^0(X,\gO_X^2)$ is generated by a nowhere degenerate holomorphic $2$-form.  The reader is referred to~\cite{HuybHK} for basic results about compact hyper-K\"ahler manifolds.

Let $X$ be a compact hyper-K\"ahler manifold.  Let $q_X$ be the Beauville--Bogomolov--Fujiki quadratic form on $H^2(X,\bR)$, and let
$$\Phi_X \colon H^2(X,\bR) \simeq H^{2n-2}(X,\bR)$$ 
be the isomorphism (of Hodge structures) sending $\ga \in H^2(X,\bR)$ to the Poincar\'e dual of $q_X(\ga , \cdot) \in H^2(X,\bR)^\vee$.  For every cone $C \subset H^{1,1}(X,\bR)$, its dual with respect to $q_X$ is denoted by
$$C^* \cnec \left\{\ga \in H^{1,1}(X,\bR) \mid q_X(\ga , \gb) \ge 0 \text{ for every } \gb \in C \right\}.$$
We have
$$\Phi_X(C^*) = C^\vee,$$
where $C^\vee \subset H^{n-1,n-1}(X,\bR)$ is the Poincar\'e dual of $C$.

For a compact hyper-K\"ahler manifold $X$, there are two other natural positive cones that we can define in $H^{1,1}(X,\bR)$: the birational K\"ahler cone $\cB\cK (X)$ and the positive cone $\cC(X)$. Recall that
$$\cB\cK (X) \cnec \bigcup_{f \colon X \dto X'} f^*\cK(X') \subset H^{1,1}(X,\bR),$$ 
where the union runs through all bimeromorphic maps $f$ from $X$ to another compact hyper-K\"ahler manifold and $\cC(X)$ is defined to be the connected component of
$$\left\{\ga \in  H^{1,1}(X,\bR) \mid q_X(\ga,\ga) > 0 \right\}$$
containing $\cK(X)$. 

We are able to answer Problem~\ref{prob-OP} for hyper-K\"ahler manifolds based on Huybrechts' description of their nef cones.

\begin{pro}\label{pro-HK}
Let $X$ be a compact hyper-K\"ahler manifold.  If\, $X$ satisfies the dual Kodaira condition (K), then $X$ is projective.
\end{pro}

\begin{proof}
Let $n \cnec \dim X$.  We show that if $X$ is not projective, then $\Int(\cK(X)^\vee) \cap H^{n-1,n-1}(X,\bQ) = \emptyset$. Let $V \subset H^{1,1}(X,\bR)$ be the subspace generated by $\Phi_X^{-1}(H^{n-1,n-1}(X,\bQ))$. Since $X$ is assumed to be non-projective, we have $q_X(\ga,\ga) \le 0$ for every $\ga \in H^{1,1}(X,\bQ)$; \cf~\cite[Proposition 26.13]{HuybHK}.  In particular, ${q_X}_{|V}$ is negative semi-definite.  As the signature of $q_X$ on $H^{1,1}(X,\bR)$ is $(1, h^{1,1}(X))$, there exists an $\go \in H^{1,1}(X,\bR) \bss \{0\}$ such that $q_X(\go,\go) \ge 0$ and $q_X(\go, \ga) = 0$ for every $\ga \in V$.  It follows that $\go \cdot \gb = 0$ for every $\gb \in H^{n-1,n-1}(X,\bQ)$.

\looseness=-1 Since $q_X(\go,\go) \ge 0$, up to replacing $\go$ with $-\go$, we can assume that $\go \in \ol{\cC(X)}$.  For every rational curve $C \subset X$, since $[C] \in H^{n-1,n-1}(X,\bQ)$, we have $\go \cdot C = 0$.  It follows from~\cite[Proposition 28.2]{HuybHK} that $\go \in \ol{\cK(X)}$.  Since $\go \cdot \gb = 0$ for every $\gb \in H^{n-1,n-1}(X,\bQ)$, we conclude that $\Int(\cK(X)^\vee) \cap H^{n-1,n-1}(X,\bQ) = \emptyset$.
\end{proof}

\ssec{Ricci-flat manifolds}
%\hfill

We finish this section with a proof of Theorem~\ref{thm-RP}.

\begin{proof}[Proof of Theorem~\ref{thm-RP}]
Let $n \cnec \dim X$.  By~\cite[Th\'eor\`eme 2]{MR730926}, there exists a finite \'etale cover $\ti{X} \to X$ such that $\ti{X} = T \times Y \times \prod_i Z_i$, where $T$ is a complex torus, $Y$ is a compact K\"ahler manifold with $H^0(Y,\gO_Y^2) = 0$, and each $Z_i$ is a hyper-K\"ahler manifold.  By Lemma~\ref{lem-finietintK}, we have $\Int(\cK(\ti{X})^\vee) \cap H^{2n-2}(\ti{X},\bQ) \ne \emptyset$.  It follows from Proposition~\ref{pro-domduK} that $\Int\(\cK(T)^\vee\) \cap H^{2\dim T-2}(T,\bQ) \ne \emptyset$ and $\Int\(\cK(Z_i)^\vee\) \cap H^{2\dim Z_i -2}(Z_i,\bQ) \ne \emptyset$.  Thus $T$ and the $Z_i$ are projective by Theorem~\ref{thm-alb} and Proposition~\ref{pro-HK}, respectively.  Since $Y$ is also projective (because $H^0(Y,\gO_Y^2) = 0$), we conclude that $X$ is projective.
\end{proof}

\section{Fibrations in abelian varieties}\label{sec-fibab}

In this section, we study the Oguiso--Peternell problem for fibrations in abelian varieties over a curve. A positive answer to the Oguiso--Peternell problem will be obtained as a consequence of the following result, which is an analog of Lemma~\ref{lem-fibTlis}.

\begin{pro}\label{pro-multsecfibab}
Let $f \colon X \to B$ be a fibration over a smooth projective curve $B$ whose general fiber is an abelian variety of dimension $g$. Assume that $X$ is a compact K\"ahler manifold and $f$\! has local sections $($for the Euclidean topology$)$ at every point of\, $B$. If there exists an $\ga \in H^{g,g}(X,\bZ)$ such that $f_*\ga \ne 0 \in H^0(B,\bZ)$, then $X$ is projective.

In particular, if $X$ satisfies the dual Kodaira condition (K), then $X$ is projective.
\end{pro}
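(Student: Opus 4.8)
The plan is to reduce the second assertion to the first, and then to prove the first by upgrading the cohomological datum $f_*\ga\neq 0$ to an actual multi-section, via a Deligne-cohomology computation in the spirit of Lemma~\ref{lem-fibTlis}. For the \emph{in particular} part, note that since $B$ is a curve and the general fibre has dimension $g$, we have $n=g+1$, so $H^{n-1,n-1}(X)=H^{g,g}(X)$. Given $\ga\in\Int(\cK(X)^\vee)\cap H^{n-1,n-1}(X,\bQ)$, I would first clear denominators: using $H^{n-1,n-1}(X,\bQ)=H^{n-1,n-1}(X,\bZ)\otimes\bQ$ and the fact that $\Int(\cK(X)^\vee)$ is a cone, some positive integer multiple of $\ga$ lies in $H^{g,g}(X,\bZ)$, so we may assume $\ga$ is integral. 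Picking an ample class $\go_B$ on the projective curve $B$, the pullback $f^*\go_B$ is nef and nonzero (for instance $f^*\go_B\cdot\go_X^{n-1}=\go_B\cdot f_*(\go_X^{n-1})>0$ for a K\"ahler class $\go_X$ on $X$, by the projection formula Proposition~\ref{pro-formproj}), hence $\ga\cdot f^*\go_B>0$. Applying the projection formula once more gives $\ga\cdot f^*\go_B=(f_*\ga)\cdot\go_B$, which forces $f_*\ga\neq 0\in H^0(B,\bQ)$, so the hypothesis of the main statement is met.

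To prove the main statement, I would reduce projectivity to the existence of a multi-section. The general fibre is an abelian variety, hence projective, and $B$ is projective; since $X$ is compact K\"ahler, hence in the Fujiki class $\cC$, Corollary~\ref{cor-critcamp} tells us that $X$ is Moishezon if and only if $f$ admits a multi-section. As a Moishezon compact K\"ahler manifold is projective, it suffices to produce a multi-section of $f$.

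The core step is to construct this multi-section from the relation $f_*\ga\neq 0$, which exhibits $\ga$ as a \emph{cohomological multi-section}: its restriction to a general fibre $X_b$ equals $(f_*\ga)\cdot[\mathrm{pt}]\neq 0\in H^{2g}(X_b,\bZ)$. I would run the torsor argument of \S\ref{sec-fibtlis}. Let $B^\circ\subset B$ be the dense open over which $f$ is smooth and $f^\circ:X^\circ\to B^\circ$ the induced smooth torus fibration; then $\ga^\circ\cnec\ga_{|X^\circ}$ satisfies $f^\circ_*\ga^\circ=(f_*\ga)_{|B^\circ}\neq 0$. The computation behind Lemma~\ref{lem-fibTlis} — whose only inputs are the surjectivity of the regulator map onto $H^{g,g}(X^\circ,\bZ)$ and the commutative diagram~\eqref{SE-DeligneJaccomm}, neither of which uses compactness of the base — then shows that the torsor class $\eta(f^\circ)\in H^1(B^\circ,\cJ)$ is annihilated by $f^\circ_*\ga^\circ$, hence is torsion. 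By the correspondence~\eqref{corr-Jtors} (see~\cite{ClaudonToridefequiv}), $f^\circ$ acquires an \'etale multi-section $\gS^\circ\subset X^\circ$, whose closure $\gS\cnec\ol{\gS^\circ}\subset X$ dominates $B$ with finite generic fibre, i.e. is a multi-section of $f$. Combined with the previous paragraph, this yields the projectivity of $X$.

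The main obstacle I anticipate is the treatment of the singular fibres, and with it the precise role of the hypothesis that $f$ has local sections at every point of $B$. Over $B^\circ$ the fibration is a smooth proper family and so already admits local (Euclidean) sections by Ehresmann's theorem; the assumption therefore adds content only at the finitely many singular fibres, where I expect it to be used either to guarantee that $f$ is a torsor under a group scheme over all of $B$ (so that the relative Jacobian sheaf $\cJ$ and the torsion criterion of~\cite{ClaudonToridefequiv} apply over $B$ itself, bypassing the passage to $B^\circ$), or to ensure $R^{2g}f_*\bZ_X\simeq\bZ_B$ through the existence of local multi-sections (the analogue of the surjectivity exploited in Lemma~\ref{lem-isomGysin}). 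Making the Deligne-cohomology machinery of \S\ref{sec-fibtlis}, set up for a \emph{smooth} fibration, operate in the presence of singular fibres — or else verifying that the torsion of $\eta(f^\circ)$ genuinely survives the closing-up to a multi-section of $f$ over $B$ — is the delicate point that this hypothesis is designed to resolve.
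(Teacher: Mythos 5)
Your reductions are sound and agree with the paper: the \emph{in particular} part does follow from the projection formula (the paper quotes~\cite[Proposition 2.5]{OguisoPeternellconeKahdual} instead of arguing directly), and projectivity does reduce, via Corollary~\ref{cor-critcamp} and the fact that a Moishezon K\"ahler manifold is projective, to producing a multi-section of $f$ over $B$. The Deligne-cohomology computation over $B^\circ$ is also fine as far as it goes: the construction of~\eqref{SE-DeligneJaccomm} only uses smoothness of the fibration, so restricting $\ga$ to $X^\circ$ does show that $\eta(f^\circ)\in H^1(B^\circ,\cJ)$ is torsion and hence that $f^\circ$ admits an \'etale multi-section $\gS^\circ\subset X^\circ$.

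The genuine gap is the sentence asserting that the closure $\gS=\ol{\gS^\circ}\subset X$ is a multi-section of $f$. The closure of an analytic subset of $X^\circ$ in $X$ need not be analytic, and no extension theorem applies in this situation: Remmert--Stein requires the dimension of the set being closed up to exceed that of the boundary, whereas $\gS^\circ$ is a curve and $D=f^{-1}(B\bss B^\circ)$ is a divisor of dimension $g\ge 1$. Already for the trivial family $\Delta\times E\to\Delta$ the graph of $t\mapsto e^{1/t}\bmod\Lambda$ is a section over $\Delta^*$ whose closure is not analytic, so sections over $B^\circ$ can be wild at the punctures; you would have to prove that the particular multi-section produced by the torsor argument is tame there, and that is exactly the degeneration theory your argument bypasses. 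Relatedly, torsion of $\eta(f^\circ)$ in $H^1(B^\circ,\cJ)$ is strictly weaker than what the paper establishes. The paper first replaces $f$ by a log-desingularization so that all fibers are normal crossing divisors, then runs the Deligne-complex argument \emph{with log poles} to obtain the vanishing $\gd(\gamma(f_*\ga))=0$ in $H^1(B,\bar{\cJ})$, where $\bar{\cJ}$ is the canonical extension over the whole of $B$; the hypothesis of local sections at every point of $B$ is what places $f$ in the framework of bimeromorphic $J$-torsors of~\cite{HYLbimkod1}, making the class of $f$ in $H^1(B,\bar{\cJ})$ meaningful. Lemma 4.14 of~\cite{HYLbimkod1} then converts this vanishing into the statement that the multiplication-by-$m$ of $f$ is bimeromorphic \emph{over $B$} to the compactified Jacobian $\bar{J}\to B$, giving a generically finite meromorphic map $X\dto\bar{J}$ over $B$; the preimage of the $0$-section under (a resolution of) this map is a closed analytic multi-section of $f$ over all of $B$. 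This global construction is what replaces your closure step, and it cannot be recovered from the computation over $B^\circ$ alone.
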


Before we prove Proposition~\ref{pro-multsecfibab}, let us first recall some properties of the fibration $f\colon X \to B$ following~\cite[Section~4]{HYLbimkod1}; these properties partially generalize the discussion in Section~\ref{sec-fibtlis}.

Given a fibration $f\colon X \to B$ as in Proposition~\ref{pro-multsecfibab},  assume that every fiber of $f$ is a normal crossing divisor.  Since a general fiber of $f$ is Moishezon and $f$ is a proper surjective morphism between compact K\"ahler manifolds, every fiber of $f$ is Moishezon; \cf~\cite[Corollaire 2]{Campana-redalg}.  It follows from~\cite[Theorem 10.1]{CampanaPeternell2-form} that $f$ is locally projective. Let $j \colon B^\st \hto B$ be the inclusion of a non-empty Zariski open of $B$ over which $f$ is smooth, and let $\imath \colon X^\st \cnec f^{-1}(B^\st) \hto X$. Let $D \cnec f^{-1}(B \bss B^\st)$. The relative Deligne complex $\ul{D}_{X/B}(g)$ is defined to be the cone of the composition
$$R\imath_*\bZ \xto{\times (2\pi \sqrt{-1})^g}  R\imath_*\bC \simeq \gO_X^\bullet (\log{D}) \lra  \gO^{\bullet \le g-1}_{X/B}(\log{D})$$
shifted by $-1$.  Applying $Rf_*$ to the distinguished triangle
\begin{equation}\label{TD-DeligneJacsing}
%\begin{tikzcd}[cramped, row sep = 2.5]
  \ul{D}_{X/B}(g)  \lra R\imath_*\bZ  \lra
\gO^{\bullet \le g-1}_{X/B}(\log{D}) \lra \ul{D}_{X/B}(g)[1] 
%\end{tikzcd}
\end{equation}
and breaking up the associated long exact sequence at $R^{2g}f_*\ul{D}_{X/B}(g)$, we obtain a short exact sequence (see~\cite[(4.6)]{HYLbimkod1})
\begin{equation}\label{SE-DeligneJacsing}
%\begin{tikzcd}[cramped, row sep = 2.5]
0  \lra \bar{\cJ}  \lra R^{2g}f_*\ul{D}_{X/B}(g) \lra \cH^{g,g}(X/B)  \lra 0,
%\end{tikzcd}
\end{equation}
where $\bar{\cJ}$ is the canonical extension of the sheaf of germs of sections of the Jacobian fibration associated to $X^\st \to B^\st$ and the quotient $\cH^{g,g}(X/B)$ in~\eqref{SE-DeligneJacsing} is isomorphic to $R^{2g}(f \circ \imath)_* \bZ$; \cf~\cite[Lemma 4.6]{HYLbimkod1}.  By construction, the restriction of~\eqref{SE-DeligneJacsing} to $B^\st$ is the short exact sequence~\eqref{SE-DeligneJac} defined for the smooth torus fibration $X^\st \to B^\st$.  The natural map $\gO_X^\bullet \to \gO_{X/B}^\bullet(\log D)$ induces a map $\ul{D}_{X}(g) \to \ul{D}_{X/B}(g)$, which further induces
$$H^{2g}_D(X,\bZ(g)) = \bH^{2g}(X,\ul{D}_{X}(g)) 
\lra \bH^{2g}(X,\ul{D}_{X/B}(g)) \lra H^0(B, R^{2g}f_*\ul{D}_{X/B}(g)),$$ 
fitting into the commutative diagram 
\begin{equation}\label{SE-DeligneJaccommsing}
\begin{tikzcd}[cramped]
H^{2g}_D(X,\bZ(g)) \ar[d] \ar[r, twoheadrightarrow] & H^{g,g}(X,\bZ) \ar[d]  \\
H^0(B, R^{2g}f_*\ul{D}_{X/B}(g)) \arrow[r] & H^0(B,\cH^{g,g}(X/B))  \ar[r,"\gd"] & H^1(B, \bar{\cJ}),
\end{tikzcd}
\end{equation}
where the second row is an exact sequence induced by~\eqref{SE-DeligneJacsing} and the vertical arrow on the right is the composition
\begin{equation}\label{comp-Hggrle}
H^{g,g}(X,\bZ) \xhto{\hphantom{aaa}} H^{2g}(X,\bZ) \xto{\;f_*\;} 
H^0(B,R^{2g}f_* \bZ) \xto{\;\gamma\;}
H^0(B,R^{2g}(f \circ \imath)_* \bZ) \simeq H^0\(B,\cH^{g,g}(X/B)\).
\end{equation}
The commutative diagram~\eqref{SE-DeligneJaccommsing} is the analog of~\eqref{SE-DeligneJaccomm} with the presence of singular fibers.

Let $J \to B^\st$ be the Jacobian fibration associated to $f^\st = f_{|X^\st} \colon X^\st \to B^\st$. Let $\cE(B,J)$ be the set of bimeromorphic classes of fibrations $g \colon Y \to B$ in abelian varieties over $B$ such that
\begin{enumerate}
  	\item $g$ is locally bimeromorphically K\"ahler over $B$;
 	\item  $g$ has local sections at every point of $B$;
 	\item 	$g$ is smooth over $B^\st$, and the Jacobian fibration  
 	associated to $g^{-1}(B^\st) \xto{g} B^\st$ is $J \to B^\st$. 
 \end{enumerate}
Elements of $\cE(B,J)$ are called \textit{bimeromorphic $J$-torsors}.  There exists a map
$$\Phi \colon H^1(B,\bar{\cJ}) \lra \cE(B,J)$$
which is a generalization of the inverse of~\eqref{corr-Jtors}; \cf~the text after the proof of~\cite[Lemma~4.9]{HYLbimkod1}.
By construction, the map $\Phi$ sends $0 \in H^1(B,\bar{\cJ})$ to the bimeromorphic class of a compactification $\bar{J} \to B$ of the Jacobian fibration $J \to B^\st$ associated to $f^\st = f_{|X^\st} \colon X^\st \to B^\st$ such that the closure in $\bar{J}$ of the $0$-section of $J \to B^\st$ is a $0$-section of $\bar{J} \to B$.

Finally, for every $m \in \bZ_{>0}$ and $\eta \in H^1(B,\bar{\cJ})$, if $f \colon X \to B$ is a bimeromorphic $J$-torsor representing $\Phi (\eta)$, then there exists a bimeromorphic $J$-torsor $f_m \colon X_m \to B$ representing $\Phi (m\eta)$, together with a generically finite map $\bm \colon X \dto X_m$ over $B$; \cf~\cite[Lemma-Definition~4.12 and~p.91]{HYLbimkod1}.
The bimeromorphic $J$-torsor $f_m \colon X_m \to B$ is called the \textit{multiplication-by-$m$} of $f \colon X \to B$.

\begin{proof}[Proof of Proposition~\ref{pro-multsecfibab}]
Up to replacing $f$ with a K\"ahler log-desingularization of $(X,D)$, where $D \subset X$ is the union of singular fibers of $f$, we can assume that every fiber of $f$ is a normal crossing divisor.  Since the second row of~\eqref{SE-DeligneJaccommsing} is exact and the horizontal arrow in~\eqref{SE-DeligneJaccommsing} on the top is surjective, we have
$$\gd(\gamma(f_*\ga)) = 0 \in H^1(B,\bar{\cJ}),$$
where $f_*$ and $\gamma$ are defined in~\eqref{comp-Hggrle}.  It follows from~\cite[Lemma 4.14]{HYLbimkod1} that for some $m \in \bZ_{>0}$, the multiplication-by-$m$ of $f \colon X \to B$ is bimeromorphic to $\bar{J} \to B$.  Hence we have a generically finite map $\bm \colon X \dto \bar{J}$ over $B$, and the pre-image of the $0$-section of $\bar{J} \to B$ under $\bm$ gives a multi-section of $f\colon X \to B$.  We conclude by Corollary~\ref{cor-critcamp} that $X$ is Moishezon, hence projective (because $X$ is K\"ahler).

Finally, if $\ga \in H^{n-1,n-1}(X,\bQ) \cap \Int\(\cK(X)^\vee\)$, then $f_*\ga \ne 0 \in H^0(B,\bZ)$ by~\cite[Proposition 2.5]{OguisoPeternellconeKahdual}.  It follows from the main statement of Proposition~\ref{pro-multsecfibab} that $X$ is projective.
\end{proof}

\begin{cor}\label{cor-fibab}
Let $X$ be a compact K\"ahler manifold which satisfies the dual Kodaira condition (K).  Assume that $X$ is bimeromorphic to a compact K\"ahler manifold $X'$ which is the total space of a fibration $f \colon X' \to B$ over a smooth projective curve $B$ whose general fiber is an abelian variety. Then $X$ is projective.
\end{cor}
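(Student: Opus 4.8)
The plan is to deduce this from Proposition~\ref{pro-multsecfibab} by first passing to the fibred model $X'$ and then arranging, after a finite base change of $B$, that the fibration acquires local sections; the only thing separating the hypotheses of the corollary from those of Proposition~\ref{pro-multsecfibab} is the existence of such sections at the multiple fibres.

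I would begin with two elementary reductions. Since the bimeromorphic map $X \dto X'$ is in particular dominant, Corollary~\ref{cor-domduK} transfers the hypothesis to $X'$, giving $\Int(\cK(X')^\vee) \cap H^{2n-2}(X',\bQ) \ne \emptyset$. Moreover it suffices to prove that $X'$ is projective: then $X'$ is Moishezon, hence so is its bimeromorphic model $X$, and a compact K\"ahler Moishezon manifold is projective. Now the general fibre of $f \colon X' \to B$ has dimension $g = n-1$, so comparing with Proposition~\ref{pro-multsecfibab} the one missing hypothesis is that $f$ need not admit local holomorphic sections through its multiple fibres (a local section through a fibre exists precisely when that fibre has a reduced component).

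To supply local sections I would perform a finite base change. Choose a finite morphism $\pi \colon B_1 \to B$ of smooth projective curves, ramified to sufficiently high order over the finitely many points carrying a multiple fibre so that the pulled-back family has reduced fibres there, and let $f_1 \colon X_1 \to B_1$ be obtained from a component of $X' \times_B B_1$ by normalization followed by a K\"ahler desingularization. Since the normalization is finite over the K\"ahler manifold $X'$ it lies in the Fujiki class $\cC$, so $X_1$ is compact K\"ahler; the induced map $\phi \colon X_1 \to X'$ is generically finite and surjective; and over the locus $B^\st \subset B$ where $f$ is smooth the base change is \'etale, so the general fibre of $f_1$ is still the given abelian variety of dimension $g$ and $\phi$ restricts to an isomorphism on general fibres. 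By the choice of $\pi$ every fibre of $f_1$ now has a reduced component, hence $f_1$ has local sections at every point of $B_1$. \emph{Eliminating the multiple fibres by this semistable-type base change while keeping the total space K\"ahler is the main technical point of the argument.}

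It remains to feed a suitable Hodge class into Proposition~\ref{pro-multsecfibab}. Fix $\ga' \in \Int(\cK(X')^\vee) \cap H^{2n-2}(X',\bQ)$; by~\cite[Proposition 2.5]{OguisoPeternellconeKahdual} its fibrewise degree $f_*\ga' \in H^0(B,\bQ)$ is nonzero. The pullback $\phi^*\ga'$ lies in $H^{g,g}(X_1,\bQ)$, and restricting to a general fibre $F_1$ of $f_1$, on which $\phi$ is an isomorphism onto the corresponding fibre of $f$, shows that the fibrewise degree of $\phi^*\ga'$ equals that of $\ga'$; hence $(f_1)_*\phi^*\ga' \ne 0 \in H^0(B_1,\bQ)$. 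After clearing denominators I obtain $\ga_1 \in H^{g,g}(X_1,\bZ)$ with $(f_1)_*\ga_1 \ne 0$, so Proposition~\ref{pro-multsecfibab} applies to $f_1 \colon X_1 \to B_1$ and yields that $X_1$ is projective. As $\phi \colon X_1 \to X'$ is generically finite and surjective, the function field of $X'$ has the same transcendence degree as that of $X_1$, so $X'$ is Moishezon, hence projective, and therefore $X$ is projective. (Alternatively, one may extract from $X_1$ a multi-section of $f_1$, push it down along $\phi$ to a multi-section of $f$, and conclude that $X'$ is Moishezon by Campana's criterion, Corollary~\ref{cor-critcamp}.)
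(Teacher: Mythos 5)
Your proposal is correct, and it follows the paper's skeleton — transfer the class to $X'$ via Corollary~\ref{cor-domduK}, make a finite base change of $B$ so that the family acquires local sections, verify the pushforward of the pulled-back class to $H^0$ of the new base is nonzero, then apply Proposition~\ref{pro-multsecfibab} and descend projectivity — but it diverges at the one step that actually requires work, namely how local sections are produced. The paper first invokes Campana--Peternell (Moishezon fibres plus K\"ahler total space imply $f$ is locally projective), which yields local \emph{multi-}sections at every point, and then chooses the base change so as to split those multi-sections into sections. You instead eliminate the multiple fibres themselves: a ramified base change followed by normalization forces every fibre to acquire a reduced component, and a reduced fibre component in a smooth total space gives local sections by the implicit function theorem. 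This mechanism is sound: at a general point of a fibre component $F_i$ of multiplicity $m_i$ one can write $f=z_1^{m_i}$ in local coordinates, and the normalization of $\{z_1^{m_i}=s^{e}\}$ is a disjoint union of graphs $z_1=\zeta s^{e/m_i}$, each smooth over the new base, provided $m_i$ divides $e$; such points are smooth points of the normalization and so persist in the K\"ahler desingularization $X_1$. Two caveats, both reparable. First, ``ramified to sufficiently high order'' is not the right condition — high ramification coprime to $m_i$ achieves nothing; you need the ramification indices to be \emph{sufficiently divisible}, i.e.\ divisible by the lcm of all fibre-component multiplicities. Second, this elimination-of-multiple-fibres step is, as you yourself flag, the technical heart of your argument, and you assert it rather than prove it; since it does hold by the local computation just indicated, this is a gap of exposition rather than of substance. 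As for what each route buys: yours is more elementary and self-contained (no appeal to the local projectivity theorem), at the price of the fibre-multiplicity analysis; the paper's is shorter because the cited theorem supplies multi-sections with no computation and never requires looking at multiplicities at all. Finally, your nonvanishing argument by restricting $\phi^*\ga'$ to a general fibre (on which $\phi$ is an isomorphism) is a clean substitute for the paper's projection-formula computation $r_*\ti{f}_*q^*\ga = \deg(q)\, f_*\ga \ne 0$; both rest on the same input, namely \cite[Proposition 2.5]{OguisoPeternellconeKahdual}.
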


\begin{proof}
Let $n \cnec \dim X$.  As $X'$ is bimeromorphic to $X$ and $\Int\(\cK(X)^\vee\) \cap H^{n-1,n-1}(X,\bQ) \ne \emptyset$, there exists an $\ga \in H^{n-1,n-1}(X',\bQ) \cap \Int\(\cK(X')^\vee\)$ by Proposition~\ref{pro-domduK}.  Since the fibers of $f$ are Moishezon and $X'$ is K\"ahler, $f$ is locally projective; \cf~\cite[Theorem 10.1]{CampanaPeternell2-form}. In particular, $f$ has local multi-sections around every point of $B$, so there exists a finite morphism $r \colon \ti{B} \to B$ from a smooth curve $\ti{B}$ such that $X' \times_B \ti{B} \to \ti{B}$ has local sections around every point of $\ti{B}$. Let $\ti{X} \to X' \times_B \ti{B}$ be a K\"ahler desingularization, and let $q \colon \ti{X} \to X'$ and $\ti{f} \colon \ti{X} \to \ti{B}$ be the projections.  The situation is summarized in the commutative diagram
 $$
\begin{tikzcd}[cramped]
\ti{X}  \ar[r,"q"] \ar[d,"\ti{f}"] & X' \ar[d, "f"]  \\
\ti{B} \ar[r,"r"]  & B\rlap{.}
\end{tikzcd}
$$
Since
$r_*\ti{f}_*q^*\ga = f_*q_*q^*\ga = \deg(q) \cdot f_* \ga \ne 0$,
where the non-vanishing follows from~\cite[Proposition 2.5]{OguisoPeternellconeKahdual}, we have $\ti{f}_*q^*\ga \ne 0$. Since $\ti{f}$ has local sections around every point of $\ti{B}$ by construction, we deduce from Proposition~\ref{pro-multsecfibab} that $\ti{X}$ is projective.  Hence $X'$ and therefore $X$ are projective.
\end{proof}

\section{Elliptic fibrations}\label{sec-fibellip}

The main result of this section is the following proposition.

\begin{pro}\label{pro-ClisseOP}
Let $f \colon X \to B$ be an elliptic fibration from a smooth compact K\"ahler threefold $X$.  If\, $X$ satisfies the dual Kodaira condition (P), then $X$ is projective.
\end{pro}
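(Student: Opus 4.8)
The plan is to deduce projectivity from Campana's criterion by producing a multi-section of $f$. Since $\cK(X) \subset \Psef(X)$, our hypothesis gives $\Int(\cK(X)^\vee) \cap H^{2,2}(X,\bQ) \ne \emptyset$, so $a(X) \ge 2$ by Theorem~\ref{thm-mainOPK}; if $a(X) = 3$ then $X$ is already Moishezon, hence projective, and we are done. We may thus assume $a(X) = 2$, in which case the base $B$ is (bimeromorphic to) the algebraic reduction of $X$, a smooth projective surface, and both a general fibre of $f$ and $B$ are Moishezon. By Corollary~\ref{cor-critcamp} it therefore suffices to show that $f$ admits a multi-section; as $X$ is K\"ahler, Moishezon then upgrades to projective. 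Using Corollary~\ref{cor-dompseudu} I would first replace $X$ by a convenient smooth bimeromorphic model (a log-resolution making every fibre a normal crossing divisor) without destroying $\Int(\Psef(X)^\vee) \cap H^{2,2}(X,\bQ) \ne \emptyset$.

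Fix a rational class $\ga \in \Int(\Psef(X)^\vee) \cap H^{2,2}(X,\bQ)$. Pairing $\ga$ against the vertical classes $f^*\gb$, $\gb \in \Psef(B)$, and using the projection formula (Proposition~\ref{pro-formproj}) gives $f_*\ga \cdot \gb = \ga \cdot f^*\gb > 0$, so $A \cnec f_*\ga \in H^{1,1}(B,\bQ)$ is an ample class on $B$. The obstruction to a multi-section is now Hodge-theoretic: a multi-section of $f$ is a \emph{horizontal} divisor, whose class $D \in H^{1,1}(X,\bQ)$ has nonzero fibre-degree $f_*D = D \cdot F \ne 0$, where $F \in H^{2,2}(X,\bQ)$ is the fibre class. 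Conversely, following the relative Deligne-cohomology/Jacobian-torsor formalism of \S\ref{sec-fibtlis}--\S\ref{sec-fibab} (the analog of Lemma~\ref{lem-fibTlis} and diagram~\eqref{SE-DeligneJaccomm} for $g=1$), the existence of a Hodge class $D \in H^{1,1}(X,\bQ)$ with $f_*D \ne 0$ forces the Tate--Shafarevich class $\eta(f)$ to be torsion, hence $f$ to have an \'etale multi-section. So the entire problem reduces to proving that $f_* : H^{1,1}(X,\bQ) \to H^0(B,\bQ)$ is nonzero, i.e.\ to producing a horizontal rational $(1,1)$-class.

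To produce such a class I would exploit that $\ga$ is positive on the \emph{entire} pseudoeffective cone, not merely on nef classes. Suppose, for contradiction, that $f_* = 0$ on $H^{1,1}(X,\bQ)$, so that $f$ has no horizontal divisor and in particular no multi-section. Choosing a general smooth curve $C \in |mA|$ for $m \gg 0$, the surface $S_C \cnec f^{-1}(C)$ is a compact K\"ahler elliptic surface over $C$, to which the curve-base results of \S\ref{sec-fibab} (Proposition~\ref{pro-multsecfibab}) apply once one checks that the positivity carried by $\ga$ descends to a horizontal Hodge class on $S_C$; this would yield a multi-section $\gS_C \subset S_C$ of $f|_{S_C} : S_C \to C$. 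Letting $C$ range over the ample pencil $|mA|$, whose members cover $B$, the loci $\gS_C$ would sweep out a surface $\gS \subset X$ dominating $B$ generically finitely, i.e.\ a horizontal divisor, contradicting our assumption and forcing $f_* \ne 0$ on $H^{1,1}(X,\bQ)$.

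The hard part will be the two descent steps in the previous paragraph, which together constitute the genuine dimension gap between the given curve class $\ga \in H^{2,2}$ and the horizontal divisor class in $H^{1,1}$ that the multi-section criterion demands: the intersection of $\ga$ with $S_C$ is only a positive $0$-cycle, so transferring the hypothesis to $S_C$ is not automatic, and even granting each $\gS_C$, one must show the slice-wise multi-sections fit into a single algebraic horizontal surface — a spreading-out that is delicate precisely because $X$ is a priori non-projective, so the relative Chow and Hilbert schemes need not behave well. I expect this bridge to be the heart of the argument, and I would attack it either by a monodromy/rigidity analysis of the family $(\gS_C)_{C}$, or, more structurally, by extending the relative Deligne-cohomology formalism of \S\ref{sec-fibab} directly from a curve base to the surface base $B$, using the ampleness of $A = f_*\ga$ to control the higher cohomology $H^{\ge 1}(B,\bar{\cJ})$ that governs the torsors and thus the class $\eta(f)$.
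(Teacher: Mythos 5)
Your frame (reduce to $a(X)=2$, note $f_*\ga$ is ample, slice over general ample curves, aim at Campana's criterion) matches the paper's starting point, but the two steps you defer as ``the hard part'' are exactly the mathematical content of the proposition, so as written this is a gap rather than a proof. Concretely: (a) your reduction ``a Hodge class $D \in H^{1,1}(X,\bQ)$ with $f_*D \ne 0$ forces $\eta(f)$ to be torsion, hence an \'etale multi-section'' is not available in the paper for an elliptic fibration with singular fibers over a \emph{surface} base. Lemma~\ref{lem-fibTlis} requires the fibration to be smooth, and Proposition~\ref{pro-multsecfibab} requires a curve base together with local sections; extending the relative Deligne-cohomology/torsor formalism to a two-dimensional base with degenerate and multiple fibers is a substantial piece of unproved theory, not a citation. (b) The transfer of the $(2,2)$-class $\ga$ to a $(1,1)$-class of nonzero fibre degree on a slice $S_C = f^{-1}(C)$, which you admit is ``not automatic,'' is precisely where the proposition is won or lost, and your proposal offers no mechanism for it. Note also that your appeal to Theorem~\ref{thm-mainOPK} is legitimate (its proof does not use this proposition, so there is no circularity), but it turns out to be unnecessary.

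The paper's proof never produces a multi-section of $f$ at all, which makes your second hard step evaporate: it aims directly at algebraic connectedness. One first replaces $f$ by Nakayama's model $f' : X' \to B'$ with equidimensional fibres (Theorem~\ref{thm-Nakmodbim}), writes $f_*\ga = c_1(\cL)$ with $\cL$ very ample, and chooses $C \in |\cL|$ general, containing the finitely many bad points, so that $D' \cnec f'^{-1}(C')$ is \emph{irreducible} and $B' \bss C'$ is \emph{affine}. Then the Borel--Moore excision sequence together with Artin vanishing (Lemma~\ref{lem-isomGysin}) shows that $\nu_*\ga$ is supported on $D'$ --- this is where choosing $C$ in the linear system of $f_*\ga$ itself, rather than a generic ample curve, is essential --- and the polarization/splitting argument of Lemma~\ref{lem-ConjNKah3} (the irreducible-surface case of Question~\ref{que-1cycles}) lifts it to a class $\gb \in H^{1,1}(\ti{D},\bQ)$ with $p_*\gb \ne 0 \in H^0(C,\bQ)$. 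Lemma~\ref{lem-surffibP} then makes $\ti{D}$ projective, so the vertical divisors $D$ are algebraically connected, chains of them connect $X$ as $C$ varies, and Theorem~\ref{thm-critcamp} concludes. In short, the missing ingredient in your proposal for step (b) is the support/excision argument on the equidimensional model combined with the Hodge-theoretic lifting, and step (a) should be abandoned rather than filled in: once each slice is known to be projective, Campana's criterion applies to $X$ directly, with no need for any horizontal divisor or Tate--Shafarevich analysis.
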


Let us first prove some auxiliary results, starting with an analogous statement of Proposition~\ref{pro-ClisseOP} for surfaces.
 
\begin{lem}\label{lem-surffibP}
Let $S$ be a smooth connected compact K\"ahler surface, and let $f \colon S \to B$ be a surjective map onto a projective curve. If there exists an $\ga \in H^{1,1}(S,\bQ)$ such that $f_*\ga \ne 0 \in H^0(B,\bQ)$, then $S$ is projective.
\end{lem}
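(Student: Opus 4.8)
The plan is to produce a rational $(1,1)$-class on $S$ of positive self-intersection and then invoke the projectivity criterion for compact K\"ahler surfaces. The first step is to reinterpret the hypothesis intersection-theoretically. Since $B$ is connected we have $H^0(B,\bQ)\cong\bQ$, and the projection formula (Proposition~\ref{pro-formproj}), applied to the class $[b]$ of a point $b\in B$, identifies $f_*\alpha$ under this isomorphism with the intersection number $\alpha\cdot[F]$, where $F\cnec f^{-1}(b)$ is a general fibre and $[F]=f^*[b]$. Thus the assumption reads $\alpha\cdot[F]\neq 0$. Moreover $[F]^2=0$, because two general fibres are disjoint and homologous.

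Next I would work inside $H^{1,1}(S,\bQ)$, which by the Lefschetz $(1,1)$-theorem coincides with $\NS(S)\otimes\bQ$; in particular all the classes in sight are $\bQ$-divisor classes carrying the usual intersection pairing. The classes $\alpha$ and $[F]$ are linearly independent, since otherwise $\alpha\cdot[F]$ would be proportional to $[F]^2=0$. On the plane they span, the intersection form has Gram matrix $\left(\begin{smallmatrix}\alpha^2 & c\\ c & 0\end{smallmatrix}\right)$ with $c\cnec\alpha\cdot[F]\neq 0$, of determinant $-c^2<0$. Hence the class $h\cnec\alpha+t[F]$ satisfies $h^2=\alpha^2+2tc$, which becomes positive for a suitable $t\in\bQ$. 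This yields a class $h\in H^{1,1}(S,\bQ)$ with $h^2>0$.

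Finally I would deduce projectivity from the existence of such an $h$. By the Hodge index theorem the pairing on $H^{1,1}(S,\bR)$ has signature $(1,h^{1,1}(S)-1)$; since a K\"ahler class $\go$ satisfies $\go^2>0$ we get $h\cdot\go\neq 0$, and after replacing $h$ by $-h$ if necessary we may assume $h$ lies in the positive cone containing $\go$. Taking $L$ to be an integral multiple of $h$, Riemann--Roch gives $\chi(S,mL)\to+\infty$ quadratically in $m$, while $(K_S-mL)\cdot\go<0$ for $m\gg 0$ forces $h^2(mL)=h^0(K_S-mL)=0$; hence $h^0(mL)$ grows like $m^2$, so $L$ is big and $a(S)=2$, whence $S$ is projective since a compact K\"ahler surface of algebraic dimension $2$ is projective. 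The only delicate point is precisely this last implication---that a rational $(1,1)$-class of positive square forces projectivity---which could alternatively be quoted directly as Kodaira's projectivity criterion for surfaces; everything before it is elementary intersection theory on the pair $\{\alpha,[F]\}$.
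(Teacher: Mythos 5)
Your proposal is correct and follows essentially the same route as the paper: both exploit $[F]^2=0$ and $\ga\cdot[F]=f_*\ga\neq 0$ to make $\ga+t[F]$ (after clearing denominators, a line bundle class) have positive self-intersection, and then invoke the projectivity criterion for compact surfaces, which the paper simply cites as~\cite[Theorem IV.6.2]{Barth}. The only difference is cosmetic: you unpack that criterion via Hodge index and Riemann--Roch, whereas the paper quotes it directly.
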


\begin{proof}
Up to replacing $\ga$ with a multiple of it, we can assume that $\ga = c_1(\cL)$ for some line bundle $\cL$ on $S$. Since $\ga \cdot [F] = f_*\ga \ne 0 \in H^0(B,\bQ) \simeq \bQ$, where $F$ is a fiber of $f$, we have $c_1(\cL \otimes \cO(mF))^2 = c_1(\cL)^2 + 2m \ga \cdot [F] >0$ for $m \gg 0$ or $m \ll 0$.  Thus $S$ is projective by~\cite[Theorem IV.6.2]{Barth}.
\end{proof}

The next lemma concerns 1-cycles vanishing away from an irreducible surface, giving a partial answer to Question~\ref{que-1cycles} in the affirmative.

\begin{lem}\label{lem-ConjNKah3}
Let $X$ be a compact K\"ahler manifold of dimension $n$ and $Y \subset X$ an irreducible surface.  Let $\ti{\imath} \colon \ti{Y} \to X$ be the composition of a desingularization of $Y$ with the inclusion $\imath \colon Y \hto X$. There exists a sub-$\bQ$-Hodge structure $L$ of $H^2(\ti{Y},\bQ)$ such that
$$H^2(\ti{Y},\bQ) = \ker\(\ti{\imath}_* \colon H^2(\ti{Y},\bQ) \lra H^{2n-2}(X,\bQ)\) \oplus L.$$
In particular, if $\ga \in H^{n-1,n-1}(X,\bQ)$ is a Hodge class which belongs to the image of\, $\ti{\imath}_*$ $($or, equivalently, vanishes in $H^{2n-2}(X \bss Y,\bQ)$ by Lemma~\ref{lem-excision}\,$)$, then $\ga = \ti{\imath}_*\gb$ for some $\gb \in H^{1,1}(\ti{Y},\bQ)$.
\end{lem}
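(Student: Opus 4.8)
The plan is to exploit the fact that the second rational cohomology of a compact K\"ahler surface is a \emph{polarizable} Hodge structure, so that the abelian category it lives in is semisimple; the splitting asked for is then automatic. Concretely, I would first take the desingularization so that $\ti{Y}$ is a compact K\"ahler surface (a K\"ahler desingularization of $Y \subset X$ exists since $X$ is K\"ahler, as already used in Lemma~\ref{lem-excision}). Fixing a K\"ahler class on $\ti{Y}$, the Hodge--Riemann bilinear relations endow the weight-$2$ Hodge structure $H^2(\ti{Y},\bQ)$ with a polarization $Q$, namely the cup-product pairing suitably signed on the Lefschetz components. This is the essential input; everything afterwards is formal.

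Next I would observe that $\ti{\imath}_*$ is a morphism of Hodge structures. Since $\ti{\imath} : \ti{Y} \to X$ is a proper holomorphic map between compact K\"ahler manifolds with $\dim X - \dim \ti{Y} = n-2$, the Gysin map is a morphism of weight-$2$ Hodge structures $\ti{\imath}_* : H^2(\ti{Y},\bQ) \to H^{2n-2}(X,\bQ)(n-2)$. Hence $K \cnec \ker(\ti{\imath}_*)$ is a sub-$\bQ$-Hodge structure of $H^2(\ti{Y},\bQ)$. Because $Q$ restricts to a polarization of $K$, it is nondegenerate on $K$; therefore the $Q$-orthogonal complement $L \cnec K^{\perp}$ is again a sub-Hodge structure and $H^2(\ti{Y},\bQ) = K \oplus L$, which is exactly the desired decomposition. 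Equivalently, one simply invokes the semisimplicity of the category of polarizable rational Hodge structures.

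For the final assertion, suppose $\ga \in H^{n-1,n-1}(X,\bQ)$ lies in $\Ima(\ti{\imath}_*)$; by Lemma~\ref{lem-excision} this is the same as $\ga$ vanishing in $H^{2n-2}(X \bss Y,\bQ)$. Write $\ga = \ti{\imath}_*\gamma$ with $\gamma \in H^2(\ti{Y},\bQ)$ and decompose $\gamma = \gamma_K + \gamma_L$ along $K \oplus L$; then $\ga = \ti{\imath}_*\gamma_L$, so $\ga \in \ti{\imath}_*(L) = \Ima(\ti{\imath}_*)$. Since $K \cap L = 0$, the map $\ti{\imath}_*$ restricts to an isomorphism of Hodge structures $\ti{\imath}_*|_L : L \eto \Ima(\ti{\imath}_*)$, whose inverse therefore sends Hodge classes to Hodge classes. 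Under the Tate twist by $n-2$, a class of type $(n-1,n-1)$ becomes a $(1,1)$-class, so $\ga$ is a Hodge class of the weight-$2$ structure $H^{2n-2}(X,\bQ)(n-2)$; consequently $\gb \cnec (\ti{\imath}_*|_L)^{-1}(\ga)$ is a Hodge class in $L$, i.e. $\gb \in H^{1,1}(\ti{Y},\bQ)$, and $\ti{\imath}_*\gb = \ga$.

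I expect the only genuine subtlety to be the first step: one must ensure the desingularization is taken K\"ahler so that the Hodge--Riemann relations apply and $H^2(\ti{Y},\bQ)$ is polarizable. Granting polarizability, splitting $\ker(\ti{\imath}_*)$ off as a sub-Hodge structure is completely formal. It is worth stressing that the role played by a polarization on the primitive summands of $H^{2n-2}(X,\bQ)$ in the projective case (the input behind the analogous statement recalled just before Question~\ref{que-1cycles}) is here played instead by the polarizability of the \emph{surface} $\ti{Y}$, which holds without any projectivity assumption on $X$.
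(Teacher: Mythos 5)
Your argument rests on the claim that $H^2(\ti{Y},\bQ)$ is a polarizable $\bQ$-Hodge structure, the polarization being ``the cup-product pairing suitably signed on the Lefschetz components.'' This is where the proof breaks down. The Lefschetz decomposition of $H^2(\ti{Y},\bR)$ is taken with respect to a K\"ahler class on $\ti{Y}$, and $\ti{Y}$ need not be projective, so no \emph{rational} K\"ahler class need exist; the decomposition, and hence the sign-flipped pairing, is then defined only over $\bR$. What you obtain is a polarization of the real Hodge structure, not of the rational one, and this defect cannot be repaired: for instance, $H^2$ of a very general non-projective K3 surface admits no polarization at all, since every pairing on it which is a morphism of Hodge structures is a rational multiple of the intersection form (the endomorphism algebra of the Hodge structure being $\bQ$), and the intersection form is indefinite on $H^{1,1}$. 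A further warning sign is that your argument nowhere uses the irreducibility of $Y$: if polarizability of $H^2(\ti{Y},\bQ)$ were available, the same formal splitting would apply verbatim to a disjoint union of desingularized components and would settle Question~\ref{que-1cycles} in general, which the paper states as open.

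The paper's proof supplies precisely the ingredient your proposal is missing. It works with the honest rational pairing $Q$ given by the intersection form on $H^2(\ti{Y},\bQ)$, which is \emph{not} a polarization, but whose real extension splits as a direct sum $\bR\,\ti{\imath}^*\go \oplus H$ of polarized real Hodge structures, where $\go$ is a K\"ahler class on $X$ and $H \cnec (\ti{\imath}^*\go)^\perp$. The crucial extra step, absent from your proposal, is to show that $\ker(\ti{\imath}_*)$ lies inside the single summand $H$: writing $\xi = a\,\ti{\imath}^*\go + \gb$ with $\gb \in H$, pushing forward and cupping with $\go$ gives $0 = a\,[Y]\cdot\go^2$, hence $a = 0$ because $[Y]\cdot\go^2 \ne 0$ (this is where irreducibility of $Y$ enters). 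Once the kernel sits inside a polarized real summand, its $Q$-orthogonal complement --- which is a rational subspace, because $Q$ is rational --- is a sub-$\bQ$-Hodge structure meeting the kernel trivially (Lemma~\ref{lem-mshsp}), and this produces $L$. Your treatment of the final assertion (restricting $\ti{\imath}_*$ to $L$ and using that an isomorphism of Hodge structures preserves Hodge classes) agrees with the paper and is fine; the gap is entirely in the construction of $L$.
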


We start with an easy lemma.

\begin{lem}\label{lem-mshsp}
Let $\phi \colon L \to M$ be a morphism of $\bQ$-Hodge structures. Assume that $L$ has a pairing $Q$ such that $(L ,Q) \otimes \bR$ is a direct sum of polarized $\bR$-Hodge structures $(L_i,Q_i)$. If\, $\ker(\phi) \subset L_i$ for some $i$, then there exists a sub-$\bQ$-Hodge structure $L' \subset L$ such that $\phi_{|L'}$ is an isomorphism onto $\Ima(\phi)$.
\end{lem}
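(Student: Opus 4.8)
The plan is to take for $L'$ the orthogonal complement of $\ker(\phi)$ with respect to $Q$, and to show that the hypothesis $\ker(\phi) \subset L_i$ forces this complement to split off $\ker(\phi)$. Write $K \cnec \ker(\phi)$; as the kernel of a morphism of Hodge structures, $K$ is a sub-$\bQ$-Hodge structure of $L$. First I would check that $Q$ is itself a morphism of $\bQ$-Hodge structures $L \otimes L \to \bQ(-k)$, where $k$ is the weight. This can be verified after $\otimes\,\bR$: by hypothesis $(L,Q)\otimes\bR = \bigoplus_i (L_i, Q_i)$, so $Q\otimes\bR$ annihilates the cross terms $L_i \otimes L_j$ ($i \ne j$) and on each $L_i$ equals the polarization $Q_i$, which pairs $L_i^{p,q}$ only with $L_i^{q,p}$; hence $Q\otimes\bR$ is a morphism of $\bR$-Hodge structures, and this property descends to $\bQ$.

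Granting this, I would set $L' \cnec K^{\perp_Q} = \Set{v \in L | Q(v,w) = 0 \text{ for all } w \in K}$. This is the kernel of the composite $L \xrightarrow{Q} L^\vee(-k) \to K^\vee(-k)$ of morphisms of $\bQ$-Hodge structures (the first induced by $Q$, the second dual to $K \hto L$), so $L'$ is a sub-$\bQ$-Hodge structure of $L$.

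The hard part will be to prove $L = K \oplus L'$, which amounts to the nondegeneracy of $Q|_K$, and this is exactly where the confinement $K \subset L_i$ is used. Since $K$ is a sub-$\bQ$-Hodge structure of $L$, the space $K \otimes \bR$ is a sub-$\bR$-Hodge structure of $L \otimes \bR$ lying in the single summand $L_i$; on it $Q \otimes \bR$ coincides with $Q_i$. The restriction of a polarization to a sub-Hodge structure is again a polarization --- the Weil operator $C$ preserves the sub-Hodge structure, so $Q_i(\cdot, C\,\cdot)$ remains positive definite --- and is in particular nondegenerate. Thus $Q|_K \otimes \bR$, hence $Q|_K$, is nondegenerate. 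I emphasize that this uses $K \subset L_i$ crucially: spread across several summands, the differing signatures of the $Q_i(\cdot, C\,\cdot)$ relative to $Q$ itself could render $Q|_K$ degenerate.

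Nondegeneracy of $Q|_K$ gives $K \cap L' = 0$ and $\dim_\bQ L' = \dim_\bQ L - \dim_\bQ K$, so $L = K \oplus L'$ as $\bQ$-Hodge structures. Since $\phi$ kills $K$, we get $\phi(L') = \phi(L) = \Ima(\phi)$, while $\ker(\phi|_{L'}) = L' \cap K = 0$; hence $\phi|_{L'}$ is an isomorphism onto $\Ima(\phi)$, completing the argument.
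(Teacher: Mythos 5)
Your proposal is correct and follows essentially the same route as the paper: take $L' = \ker(\phi)^{\perp_Q}$, note it is a sub-$\bQ$-Hodge structure because $Q$ is compatible with the Hodge structures, and use the confinement $\ker(\phi) \subset L_i$ together with the fact that a polarization restricts to a nondegenerate (indeed polarizing) form on any sub-Hodge structure to get $\ker(\phi) \cap \ker(\phi)^{\perp} = 0$ and the dimension count. The only cosmetic difference is that the paper phrases the key step via the orthogonal complement of $\ker(\phi_\bR)$ inside $L_i$, while you phrase it as nondegeneracy of $Q|_{\ker(\phi)}$; these are the same argument.
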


\begin{proof}
As $Q \otimes \bR$ is a direct sum of polarizations of $\bR$-Hodge structures, the orthogonal complement $L' \cnec \ker(\phi)^\perp$ of $\ker(\phi)$ with respect to $Q$ is a sub-$\bQ$-Hodge structure of $L$, and we have $\dim \ker(\phi)^\perp + \dim \ker(\phi) = \dim L$ (because $Q \otimes \bR$ is non-degenerate).  Thus to prove Lemma~\ref{lem-mshsp}, it suffices to show that $\ker(\phi) \cap \ker(\phi)^\perp = 0$, or equivalently $\ker(\phi_\bR) \cap \ker(\phi_\bR)^\perp = 0$.

Since $(L_i,Q_i)$ is a polarized $\bR$-Hodge structure, the orthogonal complement $L'_i$ of $\ker(\phi_\bR)$ with respect to $Q_i$ satisfies $\ker(\phi_\bR) \cap L'_i = 0$. Hence
\begin{equation*}\pushQED{\qed}
\ker(\phi_\bR) \cap \ker(\phi_\bR)^\perp = \ker(\phi_\bR) \cap \(L_i' \oplus \bigoplus_{j \ne i} L_j\) = 0.\qedhere \popQED
	\end{equation*}
\renewcommand{\qed}{}     
\end{proof}

\begin{proof}[Proof of Lemma~\ref{lem-ConjNKah3}]
Fix a K\"ahler class $\go \in H^2(X,\bR)$, and let $H =(\ti{\imath}^*\go)^\perp \subset H^2(\ti{Y},\bR)$, where the orthogonal is defined with respect to the intersection product on $H^2(\ti{Y},\bR)$. Since $\ti{\imath}^*\go^2 \ne 0$, we have
 $$H^2(\ti{Y},\bR) = \bR \ti{\imath}^*\go \oplus H.$$ 
We verify that the restriction of the intersection product to $H$ is a polarization of the $\bR$-Hodge structure $H$: the induced pairing on $H$ is non-degenerate, and the Hodge decomposition $H \otimes \bC = H^{2,0} \oplus H^{1,1} \oplus H^{0,2}$ is orthogonal with respect to the Hermitian form $h (\gb,\gamma) \cnec \gb \cdot \bar{\gamma}$.  The restriction of $h$ to $H^{2,0} \oplus H^{0,2}$ is positive definite.  Since $Y$ is irreducible, $H^{1,1}(\ti{Y})$ is of signature $(1, \dim H^{1,1}(\ti{Y}) - 1)$.  As $\ti{\imath}^*\go^2 > 0$, the restriction of $h$ to $H^{1,1}$ is negative definite.

We have $\ker({\ti{\imath}}_*) \subset H$.  Indeed, let $\xi \in \ker({\ti{\imath}}_*)$, and write $\xi = a \cdot {\ti{\imath}}^*\go + \gb$ with $a \in \bR$ and $\gb \in H$. Then
$$0 =   {\ti{\imath}}_*(a \cdot {\ti{\imath}}^*\go + \gb) \cdot \go = a[Y]\cdot \go^2 + {\ti{\imath}}_*(\gb \cdot  {\ti{\imath}}^*\go) = a[Y]\cdot \go^2.$$
As $[Y]\cdot \go^2 \ne 0$, we have $a = 0$, so $\xi \in H$.  It follows from Lemma~\ref{lem-mshsp} that there exists a sub-$\bQ$-Hodge structure $L$ of $H^2(\ti{Y},\bQ)$ such that $H^2(\ti{Y},\bQ) = \ker({\ti{\imath}}_*) \oplus L$, which proves the main statement of Lemma~\ref{lem-ConjNKah3}.  The last statement of Lemma~\ref{lem-ConjNKah3} follows from the observation that $\ti{\imath}_{*|L} \colon L \to H^{2n-2}(X,\bQ)$ is an isomorphism of $\bQ$-Hodge structures onto $\Ima \(\ti{\imath}_*\)$.
\end{proof}

The bimeromorphic models of elliptic threefolds in the following statement will be useful in our proof of Proposition~\ref{pro-ClisseOP}.

\begin{thm}[\cf~{\cite[Theorem A.1]{NakayamaLoc}}]\label{thm-Nakmodbim}
Let $f\colon X \to B$ be an elliptic fibration with $X$ being a compact K\"ahler threefold.  Then $f$\! is bimeromorphic to an elliptic fibration $f' \colon X' \to B'$ over a compact normal surface $B'$ satisfying the following properties:
	\begin{itemize}
		\item $X'$ has at worst terminal singularities.
		\item $f'$ has equidimensional fibers.
	\end{itemize}
\end{thm}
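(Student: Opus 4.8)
The statement is a bimeromorphic model theorem for elliptic threefolds, and my plan is to obtain it by running a relative Minimal Model Program over the base and then analysing the fibres of the resulting minimal model. Note first that, since $X$ is a threefold and the general fibre is an elliptic curve, $B$ is a surface. After resolving $X$ and the base and flattening $f$ (Hironaka), I may assume that $X$ is a smooth compact K\"ahler threefold, that $B$ is smooth, and that $f$ is a flat morphism whose general fibre $F$ is a smooth elliptic curve, with discriminant a curve in $B$. Because $K_F \equiv 0$, adjunction gives $K_{X/B} \cdot F = 0$, so from $K_X = K_{X/B} + f^* K_B$ one sees that, numerically over $B$, the $K_X$-MMP coincides with the $K_{X/B}$-MMP; in particular the fibres $F$ are never contracted.

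Next I would run this relative MMP in the compact K\"ahler category, using the Minimal Model Program for compact K\"ahler threefolds (\cite[Theorem 6.2]{HorPetsurvey} and the references therein). Each step is a divisorial contraction or a flip, compatible with $f$, and the program terminates in a relative minimal model $X \dto X'$ over a modification of $B$, where $X'$ has $\bQ$-factorial terminal singularities and $K_{X'/B'}$ is $f'$-nef. This already yields the terminality asserted in the statement. Along the way the base is modified by contracting the images of the contracted divisors; taking the normalisation produces the compact normal surface $B'$, and $f' : X' \to B'$ is bimeromorphic to $f$ as a fibration.

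The hard part will be to prove that $f'$ is \emph{equidimensional}, i.e. that no fibre is two-dimensional. Here I would invoke the canonical bundle formula for elliptic fibrations (Kodaira's formula and its threefold generalisation), which expresses $K_{X'/B'}$ as the pullback of a $\bQ$-divisor on $B'$ corrected by the multiple-fibre and moduli contributions; combined with terminality of $X'$ this forces the fibres to be ``minimal''. Concretely, a prime divisor $D \subset X'$ contracted by $f'$ to a point would be swept out by curves that are $K_{X'/B'}$-trivial over $B'$, and relative minimality together with terminality should exclude such a divisorial fibre, since otherwise one could perform a further contraction lowering the relative Picard number, against minimality. Establishing this exclusion rigorously in the K\"ahler setting, where the cone and contraction theorems are subtler than in the projective case, is the main technical obstacle; once it is in place, $f'$ is equidimensional and the theorem follows. (An alternative, more classical route, which I would keep in reserve, is to bypass the general MMP and build the model explicitly from the Weierstrass model of the associated Jacobian fibration, whose fibres are cut out in a $\bP^2$-bundle and are therefore equidimensional by construction, and then terminalise; the equidimensionality is then automatic and the difficulty shifts to controlling the terminalisation.)
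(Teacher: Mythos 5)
There is a genuine gap, and it sits exactly where you flag it: the equidimensionality of $f'$. Worse, the mechanism you propose to close it cannot work. First, a relative MMP over $B$ takes place \emph{over a fixed base}: divisorial contractions and flips contract curves and divisors in $X$, never in $B$, so your assertion that ``along the way the base is modified by contracting the images of the contracted divisors'' does not describe anything the relative MMP can do. Second, relative minimality does not exclude two-dimensional fibers. By the canonical bundle formula for elliptic fibrations, $K_{X'}$ is, up to contributions pulled back from the base, trivial over $B$; hence a prime divisor of $X'$ contracted by $f'$ to a point is $K$-trivial over the base, the MMP (which only contracts $K$-negative extremal rays) will never remove it, and its presence is perfectly compatible with $K_{X'}$ being nef over $B$ and with terminality. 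Contracting such a divisor by hand would be a crepant contraction that in general destroys terminality, and there is no ``further contraction lowering the relative Picard number'' available. The true source of equidimensional models is the opposite operation: one must \emph{blow up} the base (flattening, as in Miranda's work on elliptic threefolds and in Nakayama's), so that vertical divisors come to dominate curves in the new base, and then take an appropriate model over that modified, generally singular, normal surface. This is precisely why the statement allows $B'$ to be a compact normal surface different from $B$, and it is precisely what a relative MMP over a fixed $B$ can never supply. Your fallback route has its own problem: the Weierstrass model of the \emph{Jacobian} fibration is a model of the Jacobian, not of $X$; since $f$ need not have a meromorphic multisection-free trivialization, $X$ is in general not bimeromorphic to its Jacobian, so that construction does not produce a bimeromorphic model of $f$ at all.

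For comparison, the paper does not reprove this result: the theorem \emph{is} Nakayama's Theorem A.1, and the paper's proof is a two-line reduction to its hypotheses. One replaces $X$ and $B$ by K\"ahler desingularizations so that both are smooth and the discriminant locus of $f$ is a normal crossing divisor, invokes \cite[Theorem 3.3.3]{NakayamaLoc} to conclude that $f$ is locally projective, and then cites \cite[Theorem A.1]{NakayamaLoc} directly. If you want a self-contained argument along your lines, the essential missing input is the base modification (flattening) step and the analysis showing that, over the modified base, the terminal relatively minimal model has no two-dimensional fibers; this is the technical heart of Nakayama's paper and is not recoverable from minimality and terminality alone.
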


\begin{proof}
Up to replacing $X$ and $B$ with some K\"ahler desingularizations of them, we can assume that both $X$ and $B$ are smooth, $X$ is K\"ahler, and the discriminant locus of $f$ is a normal crossing divisor of $B$.  Under these assumption, $f$ is locally projective by~\cite[Theorem 3.3.3]{NakayamaLoc}.  Theorem~\ref{thm-Nakmodbim} is then a consequence of~\cite[Theorem A.1]{NakayamaLoc}.
\end{proof}

Now we prove Proposition~\ref{pro-ClisseOP}.

\begin{proof}[Proof of Proposition~\ref{pro-ClisseOP}]
Let $f' \colon X' \to B'$ be an elliptic fibration bimeromorphic to $f$ as in Theorem~\ref{thm-Nakmodbim}.  Note that since $\dim X' = 3$ and $\dim B' = 2$, both $X'$ and $B'$ have at worst isolated singularities.  Let
	$$Z \cnec \Sing(B') \cup f'(\Sing(X')),$$ 
which is a finite subset of $B'$.
	
By Corollary~\ref{cor-dompseudu}, we can freely replace $X$ with any smooth bimeromorphic model of it.  Up to replacing $f \colon X \to B$ with a bimeromorphic model of it by resolving the bimeromorphic map $X \dto X'$ and taking K\"ahler desingularizations, we can assume that both ${X}$ and $B$ are compact K\"ahler manifolds, and $f$ fits into the commutative diagram
	\begin{equation}\label{cd-modtermres}
	\begin{tikzcd}[cramped]
	{X} \ar[r,"\nu"]\ar[d," {f}"]  & X'  \ar[d,"f'"]  \\
	{B} \ar[r,"\mu"] & B'\rlap{,} 
	\end{tikzcd}
	\end{equation}
where the horizontal arrows are bimeromorphic morphisms.
	
Since $\Int(\Psef( {X})^\vee)$ contains a rational class $\ga$, we have $ {f}_*\ga \in \Int(\Psef( {B})^\vee)$ by Lemma~\ref{lem-push}.  In particular, the surface $B$ satisfies the dual Kodaira condition, so $B$ is projective.  Thus $ {f}_*\ga \in H^{1,1}( {B},\bQ)$ is ample by Kleiman's criterion.  Up to replacing $\ga$ with a positive multiple of it, we can assume that $ {f}_*\ga = c_1(\cL) \in H^{1,1}( {B},\bQ)$ for some very ample line bundle $\cL$ on $ {B}$.  Up to further replacing $\ga$ with a multiple of it, we can find a linear system $T \subset |\cL|$ such that a general member $C$ of $T$ satisfies the following properties:
	\begin{itemize}
		\item $C$ is smooth and irreducible.
		\item $C' \cnec \mu(C)$ is a curve containing $Z \subset B'$ (as $Z$ is finite).
		\item $D' \cnec f'^{-1}(C')$ is irreducible (as $f'$ is equidimensional).
		\item A general pair of points of $ {B}$ is connected by a chain of general members of $T$.
	\end{itemize}

Let $C$ be a general member of $T$ (satisfying the above properties).  We have the commutative diagram
\begin{equation}\label{cd-fibell1}
\begin{tikzcd}[cramped]
H_2(D',\bQ)  \ar[r , "\imath'_*"] \ar[d, "(f'_{|D'})_*"] &  H_2(X',\bQ)  \ar[r] \ar[d, "f'_*"] & H^{\BM}_2(X' \bss D',\bQ)  \ar[d, "\wr"', "(f'_{|X'\bss D'})_*"]  \\
H_2(C',\bQ)  \ar[r] & H_2(B',\bQ)  \ar[r] &  H^{\BM}_2(B' \bss C',\bQ)\rlap{,}  
\end{tikzcd}
\end{equation}
where the rows of~\eqref{cd-fibell1} are part of the long exact sequences of Borel--Moore homology groups induced by the open embeddings, \cf~\cite[Section~IX.2, (2.1)]{IversenBook},
and the vertical arrows between them are induced by $f'\colon X' \to B'$.

\begin{lem}\label{lem-isomf'x'd'}
	The map $(f'_{|X'\bss D'})_*$ in~\eqref{cd-fibell1} is an isomorphism. 
\end{lem}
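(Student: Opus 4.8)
The plan is to identify $(f'_{|X'\bss D'})_*$ with a Gysin pushforward of a proper fibration over an \emph{affine} base and then apply Lemma~\ref{lem-isomGysin} through Poincar\'e duality. First I would record the geometry over the open locus. Since $Z = \Sing(B') \cup f'(\Sing(X'))$ is contained in $C'$, the complement $B' \bss C'$ lies in the smooth locus of $B'$; and because $\Sing(X') \subseteq f'^{-1}(f'(\Sing(X'))) \subseteq D'$, the complement $X' \bss D' = f'^{-1}(B' \bss C')$ lies in the smooth locus of $X'$. Hence $X' \bss D'$ and $B' \bss C'$ are smooth complex manifolds of dimension $3$ and $2$, and $g \cnec f'_{|X'\bss D'} : X' \bss D' \to B' \bss C'$ is a proper surjective morphism whose fibers, being the fibers of the elliptic fibration $f'$ over points of $B' \bss C'$, are all connected of dimension $1$ (recall $f'$ has equidimensional fibers by Theorem~\ref{thm-Nakmodbim}).

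Next I would prove that $B' \bss C'$ is affine, which is where the choice of $C'$ is used. The exceptional locus $\mathrm{Exc}(\mu)$ of the birational morphism $\mu : B \to B'$ is contracted into $\Sing(B') \subseteq Z \subseteq C'$, so $\mu^{-1}(C') = C \cup \mathrm{Exc}(\mu)$ and $\mu$ restricts to an isomorphism $B \bss \mu^{-1}(C') \xrightarrow{\sim} B' \bss C'$. Writing $\mathrm{Exc}(\mu) = \bigcup_i E_i$, the class $[C]$ is ample on $B$ because $C \in |\cL|$ with $\cL$ very ample, so by the openness of the ample cone the $\bQ$-divisor $C + \epsilon \sum_i E_i$ is ample for some small $\epsilon \in \bQ_{>0}$. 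Its support is exactly $\mu^{-1}(C')$, so $B \bss \mu^{-1}(C')$ is the complement of the support of an ample divisor in the projective surface $B$, hence affine; therefore $B' \bss C'$ is affine as well.

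Finally I would conclude using Poincar\'e duality together with Lemma~\ref{lem-isomGysin}. Under the Poincar\'e duality isomorphisms $H^{\BM}_2(X' \bss D',\bQ) \simeq H^4(X' \bss D',\bQ)$ and $H^{\BM}_2(B' \bss C',\bQ) \simeq H^2(B' \bss C',\bQ)$, the proper pushforward $(f'_{|X'\bss D'})_*$ corresponds to the Gysin map $g_* : H^4(X' \bss D',\bQ) \to H^2(B' \bss C',\bQ)$ (the degree drops by $2(\dim X' \bss D' - \dim B' \bss C') = 2$). Since $g$ is a proper surjective morphism of smooth complex manifolds with equidimensional connected fibers over the affine surface $B' \bss C'$, Lemma~\ref{lem-isomGysin} (whose proof uses only these properties) shows that this Gysin map is an isomorphism with $\bC$-coefficients, and hence also with $\bQ$-coefficients. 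Thus $(f'_{|X'\bss D'})_*$ is an isomorphism.

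The main obstacle is the affineness of $B' \bss C'$: everything else is formal once $g$ is recognized as a proper equidimensional fibration over an affine base. This is precisely the point at which the earlier careful construction of $C'$ as an ample curve containing the finite locus $Z$ pays off, since it is exactly what makes Artin vanishing (through Lemma~\ref{lem-isomGysin}) applicable.
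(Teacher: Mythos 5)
Your strategy coincides with the paper's: identify the pushforward with a Gysin map through Poincar\'e duality, prove that $B' \bss C'$ is affine using the ampleness of $C$, and invoke Lemma~\ref{lem-isomGysin}; the smoothness discussion and the final application (including the passage from $\bC$- to $\bQ$-coefficients) are fine. The gap is your claim that $\mathrm{Exc}(\mu)$ is contracted into $\Sing(B')$. This is false for bimeromorphic morphisms of surfaces in general: such a morphism can perfectly well contract curves to \emph{smooth} points of the target (any blow-up at a smooth point does), and nothing in the construction of diagram~\eqref{cd-modtermres} rules this out, since $B$ there is obtained by resolving the bimeromorphic map $X \dto X'$, which may force blow-ups of $B$ over smooth points of $B'$ away from $Z$. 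Both statements you derive from this claim depend on it essentially: the identification of $\mu^{-1}(C')$ with the support $C \cup \mathrm{Exc}(\mu)$ of your ample $\bQ$-divisor, and the assertion that $\mu$ restricts to an isomorphism $B \bss \mu^{-1}(C') \eto B' \bss C'$. If some exceptional curve were contracted to a smooth point $p \notin C'$, your argument would only yield that $B' \bss (C' \cup \mu(\mathrm{Exc}(\mu)))$ is affine, which does not imply that $B' \bss C'$ is affine (indeed, deleting a nonempty finite set from a normal affine surface never produces an affine variety, so in that scenario $B' \bss C'$ could not be affine at all). So the possibility of exceptional curves escaping $C'$ must genuinely be excluded, not assumed away; note also that $C'$ is already fixed by the surrounding proof of Proposition~\ref{pro-ClisseOP}, so you cannot retroactively enlarge $Z$ when choosing $C$.

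The repair is to run the two steps in the opposite order, which is what the paper does. Set $\ti{C} \cnec \mu^{-1}(C')$ and write $\ti{C} = C \cup C_1$ for some (possibly empty) curve $C_1$, with no attempt to identify $C_1$ with $\mathrm{Exc}(\mu)$. Since $C$ is ample, $m'C + C_1$ is ample for $m' \gg 0$, and its support is exactly $\ti{C}$; hence $B \bss \ti{C}$ is affine. Only now deduce the isomorphism: an irreducible $\mu$-exceptional curve $E$ either has image in $C'$, and then $E \subset \ti{C}$, or has image a point outside $C'$, and then $E \cap \ti{C} = \emptyset$; the latter would place the compact curve $E$ inside the affine variety $B \bss \ti{C}$, which is impossible. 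Therefore every $\mu$-exceptional curve lies in $\ti{C}$, so $\mu : B \bss \ti{C} \to B' \bss C'$ is a proper, injective, bimeromorphic morphism onto a normal complex space, hence an isomorphism, and $B' \bss C'$ is affine. (This shows a posteriori that the conclusion of your claim, namely $\mu(\mathrm{Exc}(\mu)) \subset C'$, does hold here --- but its proof has to pass through the affineness of $B \bss \ti{C}$, not the other way around.) With this substitution your proof becomes correct and agrees with the paper's.
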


\begin{proof}
As $C' = \mu(C)$ contains $Z \cnec \Sing(B') \cup f'(\Sing(X'))$, both $X'\bss D'$ and $B'\bss C'$ are smooth.  So $(f'_{|X'\bss D'})_*$ is identified with
$$\left(f'_{|X'\bss D'}\right)_* \colon H^4(X' \bss D',\bQ) \lra H^2(B' \bss C',\bQ)$$
through Poincar\'e duality.  Let $\ti{C} \cnec \mu^{-1}(C') \subset {B}$.  Then $\ti{C} = C \cup C_1$ for some curve $C_1 \subset {B}$.  As $m(m'C + C_1)$ is very ample for $m, m' \gg 0$, the complement $B' \bss C' \simeq {B} \bss \ti{C} = {B} \bss (C \cup C_1)$ is affine.  As $f'$ has equidimensional connected fibers, it follows from Lemma~\ref{lem-isomGysin} that $(f'_{|X'\bss D'})_*$ is an isomorphism.
\end{proof}

Let $\ti{D}$ be a desingularization of the proper transform of the divisor $D'$ under $\nu$, and let
$$
\begin{tikzcd}[cramped]
\ti{D} \ar[r," {\imath}"] \ar[d] &  {X}  \ar[d,"\nu"]  \\
D'  \ar[r ,hook, "\imath'"] & X'
\end{tikzcd}
$$
be the induced commutative diagram. Since
$$f'_*\nu_*\ga = \mu_* {f}_*\ga = [C'] \in H_2(B',\bQ),$$
by~\eqref{cd-fibell1} and Lemma~\ref{lem-isomf'x'd'} we have $\nu_*\ga \in \Ima\(\imath'_* \colon H_2(D',\bQ) \to H_2(X',\bQ)\)$.  As $\ga \in H_2( {X},\bQ)$, which is a pure Hodge structure of weight $-2$, we have $\nu_*\ga \in \bW_{-2}H_2(X',\bQ)$, so by the strictness of $\nu_*$, we have $\nu_*\ga = \imath'_* \ga_0$ for some $\ga_0 \in \bW_{-2}H_2(D',\bQ)$.  Since $\ti{D} \to D'$ is a desingularization of $D'$, $\ga_0 \in \bW_{-2}H_2(D',\bQ)$ can be lifted to $\ga_1 \in H_2(\ti{D},\bQ)$ by Lemma~\ref{lem-fpods}, so
$$\nu_* {\imath}_*\ga_1 = \nu_*\ga \in H_2(X',\bQ).$$
Since $X'$ has at worst rational singularities, Lemma~\ref{lem-clHdgctr} implies that $ {\imath}_*\ga_1 - \ga \in H_2( {X},\bQ)$ is a Hodge class.  As $\ga$ is a Hodge class, so is $ {\imath}_*\ga_1$.  Since $\ti{D}$ is irreducible, by Lemma~\ref{lem-ConjNKah3} there exists an $\ga_2 \in H^{1,1}(\ti{D},\bQ)$ such that $ {\imath}_*\ga_2 = {\imath}_*\ga_1$.  By construction, $ {f}( {\imath}(\ti{D})) = C$, so we have the factorization
$$ {f} \circ  {\imath} \colon  \ti{D} \xto{\;\;p\;} C \xhto{\;\;\jmath\;\;}  {B}.$$
We have 
$$\mu_* {f}_* {\imath}_*\ga_2 = 
f'_*\nu_* {\imath}_*\ga_2 = f'_*\nu_* {\imath}_*\ga_1 = f'_*\nu_*\ga = \mu_* {f}_*\ga  = [C'] \ne 0,$$
so
$$\jmath_*p_*\ga_2 =  {f}_* {\imath}_*\ga_2 \ne 0,$$
and thus $p_*\ga_2 \ne 0 \in H^0(C,\bQ)$.  It follows from Lemma~\ref{lem-surffibP} that $\ti{D}$ is a projective surface, so $D \cnec \imath(\ti{D}) \subset X$ is algebraically connected.

As a general pair of points of $ {B}$ is connected by a chain of general members $C$ of $T$, when $C$ varies in $T$, the divisors $D$ thus connect $ {X}$ by construction.  Hence ${X}$ is projective by Theorem~\ref{thm-critcamp}.
\end{proof}

Thanks to Proposition~\ref{pro-ClisseOP}, we can exclude threefolds of algebraic dimension 2 in Problem~\ref{prob-psefdual}.

\begin{cor}\label{cor-a=2}
Let $X$ be a smooth compact K\"ahler threefold.  If\, $X$ satisfies the dual Kodaira condition (P), then $a(X) \ne 2$.
\end{cor}

\begin{proof} 
Assume that $a(X) = 2$. Then the algebraic reduction of $X$ is bimeromorphic to an elliptic fibration $f \colon X' \to B$; \cf~\cite[Theorem 12.4]{UenoClassAlgVar}.  By desingularization, we can assume that $X'$ is smooth and K\"ahler.  As $\Int\(\Psef(X)^\vee\)$ contains a rational class, $\Int\(\Psef(X')^\vee\)$ contains a rational class $\ga$ as well by Corollary~\ref{cor-dompseudu}.  Thus $X'$ is projective by Proposition~\ref{pro-ClisseOP}, which contradicts $a(X) = 2$. Hence $a(X) \ne 2$.
\end{proof}

\section{Proofs of Theorems~\ref{thm-mainOP} and~\ref{thm-mainOPK}}\label{sec-preuve}

\begin{proof}[Proof of Theorem~\ref{thm-mainOPK}]
Let $X$ be a compact K\"ahler threefold as in Theorem~\ref{thm-mainOPK}.  Assume to the contrary that $a(X) \le 1$. Then $X$ is bimeromorphic to a variety $X'$ satisfying one of the descriptions listed in Proposition~\ref{pro-class}.  We will rule out these descriptions case by case and therefore obtain a contradiction.  Suppose that $X'$ is in case~\eqref{pc-1}, namely $X'$ is a $\bP^1$-fibration $X' \to S$ over a smooth compact K\"ahler surface.  Then $S$ is projective by~\cite[Proposition 2.6]{OguisoPeternellconeKahdual}, so $X'$ is Moishezon by Corollary~\ref{cor-critcampP1}, which is impossible.  If $X'$ is in case~\eqref{pc-2}, then the projection $(S \times B)/G \to S/G$ induces a dominant meromorphic map $X \dto S/G$. Once again,~\cite[Proposition 2.6]{OguisoPeternellconeKahdual} implies that $S/G$ is projective, contradicting the fact that $S$ is non-algebraic.  Cases~\eqref{pc-3} and~\eqref{pc-4} are ruled out by Corollary~\ref{cor-fibab} and Proposition~\ref{pro-fibTlis}, respectively.  Finally, we rule out case~\eqref{pc-5} by Corollary~\ref{cor-T/G}.
\end{proof}

\begin{proof}[Proof of Theorem~\ref{thm-mainOP}]
Let $X$ be a compact K\"ahler threefold as in Theorem~\ref{thm-mainOP}.  Since $\cK(X) \subset \Psef(X)$, Theorem~\ref{thm-mainOPK} already implies that $a(X) \ge 2$. The case $a(X) = 2$ is excluded by Corollary~\ref{cor-a=2}; hence $X$ is projective.
\end{proof}

\section{One-cycles in compact K\"ahler threefolds and the Oguiso--Peternell problem}\label{sec-1cycles}

In this final section, we work under the assumption that Question~\ref{que-1cycles} has a positive answer and prove that every compact K\"ahler threefold $X$ as in Problem~\ref{prob-OP} or~\ref{prob-OPCampl} is projective (\cf~Corollary~\ref{cor-OPpar}), except for simple non-Kummer threefolds which presumably do not exist (see Remark~\ref{rem-simpNK}).  Already by Theorem~\ref{thm-mainOPK}, we know that such a threefold $X$ has algebraic dimension $a(X) \ge 2$, so the proof consists in excluding the case $a(X) = 2$.  We will deduce the latter as a consequence of the following result.

\begin{pro}\label{pro-big}
Let $f \colon X \to B$ be an elliptic fibration over a smooth projective surface $B$. Suppose that $X$ is a smooth compact K\"ahler threefold and that Question~\ref{que-1cycles} has a positive answer for $X$. If there exists an $\ga \in H^{2,2}(X,\bQ)$ such that $f_*\ga \in H^{1,1}(B,\bQ)$ is big, then $X$ is projective.
\end{pro}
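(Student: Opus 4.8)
The plan is to mimic the proof of Proposition~\ref{pro-ClisseOP}: I will produce a connecting family of projective divisors of the form $D=f^{-1}(C)$, with $C$ ranging over curves on $B$, and conclude that $X$ is algebraically connected, hence Moishezon, hence (being K\"ahler) projective by Campana's criterion (Theorem~\ref{thm-critcamp}). Since $f_*$ is a morphism of Hodge structures, $\delta\cnec f_*\ga$ lies in $H^{1,1}(B,\bQ)$, and as $B$ is a smooth projective surface the Lefschetz $(1,1)$-theorem identifies $\delta$ with a big class in $\NS(B)_\bQ$. As in Proposition~\ref{pro-ClisseOP} I would first pass to the bimeromorphic model furnished by Theorem~\ref{thm-Nakmodbim}, reducing to the case where $f$ sits in a square over an elliptic fibration $f':X'\to B'$ with $X'$ terminal — in particular with rational singularities, so that Lemma~\ref{lem-clHdgctr} is available — and with equidimensional fibers, so that preimages of curves are of pure codimension one and Lemma~\ref{lem-isomGysin} applies. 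This reduction is harmless: pulling $\ga$ back along a bimeromorphic morphism multiplies $\delta$ by a degree and leaves it big, exactly as in Lemma~\ref{lem-push} and Corollary~\ref{cor-dompseudu}.

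For a well-chosen curve $C\subset B$ the core step is to show that $\ga$ is supported on $D=f^{-1}(C)$, \ie that it vanishes in $H^{\BM}_2(X\bss D,\bQ)$. This needs two features of $C$: that $\delta$ be supported on $C$, which forces $\supp(C)\supseteq\supp(\delta)$ and makes $\delta$ die in $H^{\BM}_2(B\bss C,\bQ)$; and that $B\bss C$ be affine, so that the Gysin map $(f_{|X\bss D})_*$ is an isomorphism by Lemma~\ref{lem-isomGysin} (just as in Lemma~\ref{lem-isomf'x'd'}) and the vanishing of $\delta$ off $C$ propagates to the vanishing of $\ga$ off $D$. Granting this, the remainder copies Proposition~\ref{pro-ClisseOP} almost verbatim: by excision (Lemma~\ref{lem-excision}) the pushforward of $\ga$ lies in the image of the Gysin map from $H_2(\ti D,\bQ)$; I lift it to a class $\ga_1$ using the weight estimate of Lemma~\ref{lem-fpods}, promote it to a genuine Hodge class via rational singularities and Lemma~\ref{lem-clHdgctr}, and then invoke the \emph{positive answer to Question~\ref{que-1cycles}} — this is precisely where the hypothesis is spent, and its force over Lemma~\ref{lem-ConjNKah3} is that it applies even when $\ti D$ is \emph{reducible} — to obtain $\ga_2\in H^{1,1}(\ti D,\bQ)$ with $\imath_*\ga_2=\imath_*\ga_1$. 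A diagram chase identifies $f_*\imath_*\ga_2$ with $\delta\ne0$, whence the pushforward of $\ga_2$ to $C$ is nonzero on every component of $\ti D$ lying over $\supp(\delta)$; Lemma~\ref{lem-surffibP} then renders each such component projective, and as $C$ is connected so is $D$, so $D$ is algebraically connected. Letting $C$ vary in a connecting family (all members sharing $\supp(\delta)$ while moving through a general point) makes the $D$'s connect $X$.

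The main obstacle is to reconcile the two demands on $C$. For an ample $\delta$, as in Proposition~\ref{pro-ClisseOP}, a very ample member of $|m\delta|$ satisfies both at once; but for a merely \emph{big} $\delta$ a member of $|m\delta|$ has $\supp(C)\supseteq\supp(\delta)$ yet $B\bss C$ need not be affine. Concretely, on the curves where $\delta$ carries no positivity the fiber class of $f$ survives in $H^{\BM}_2(X\bss D,\bQ)$ (Artin vanishing, which underlies Lemma~\ref{lem-isomGysin}, fails off the affine locus), obstructing the vanishing of $\ga$ off $D$. To overcome this I would use the Zariski decomposition $\delta=P+N$ on the surface $B$: the nef and big part $P$ is semiample by the Zariski--Fujita theorem, defining a birational contraction $g:B\to\bar B$ onto a normal projective surface with $P=g^*\bar H$ for an ample class $\bar H$, and since $N$ is $g$-exceptional one has $g_*\delta=\bar H$. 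The cutting argument can then be run over $\bar B$ with a general $\bar C\in|m\bar H|$, which is irreducible, avoids the finitely many singular points of $\bar B$, has affine complement, and varies in a connecting family, while $\bar f_*\ga=\bar H$ is supported on $\bar C$ (as $\bar C\in|m\bar H|$).

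The delicate point I expect to absorb most of the work is precisely the locus contracted by $g$ (equivalently the negative part $N$): there the composite fibration $\bar f=g\circ f$ ceases to be equidimensional, so Lemma~\ref{lem-isomGysin} must be applied with care over the singular base $\bar B$, using the projection formula for rational homology manifolds (Proposition~\ref{pro-formproj}) and the equidimensionality of $f$ coming from Theorem~\ref{thm-Nakmodbim}. Controlling the contribution of these contracted curves — over which $\ga$ exerts no positivity — is the crux; by contrast the reducibility of the cut divisors, unavoidable once $\delta$ is not ample, is handled cleanly by the standing assumption that Question~\ref{que-1cycles} holds.
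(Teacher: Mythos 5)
Your overall skeleton (support $\ga$ on a divisor of the form $f^{-1}(C)$ plus excess components, use affineness of the complement and Lemma~\ref{lem-isomGysin} to get the Borel--Moore vanishing, invoke Question~\ref{que-1cycles} for the reducible support, push forward to the base, apply Lemma~\ref{lem-surffibP} and Campana's criterion) matches the actual proof, but the device you rely on to pass from a big class to an ample one is a genuine gap. The positive part $P$ of the Zariski decomposition of a big class on a surface is nef and big, but it is \emph{not} semiample in general, and the Zariski--Fujita theorem does not say it is: that theorem needs the restriction of the nef divisor to its stable base locus to be semiample. Zariski's classical example (the blow-up $B$ of $\bP^2$ at $12$ very general points of a smooth cubic, with $D$ the pullback of a quartic minus the twelve exceptional divisors) is a nef and big divisor which is its own positive part (so $N=0$) and whose restriction to the strict transform of the cubic is a non-torsion degree-zero line bundle; it is not semiample. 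So the projective contraction $g : B \to \bar{B}$ with $P = g^*\bar{H}$, $\bar{H}$ ample, on which your whole reduction rests, does not exist in general. Replacing it by Grauert's analytic contraction of $\supp(N)$ does not help: the resulting normal surface need not be projective, and the descended class is only nef and big (curves $\Gamma \not\subset \supp(N)$ with $P\cdot\Gamma = 0$ survive), so you are back where you started.

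Moreover, the point you defer as ``the crux'' --- controlling the classes supported over the curves on which $f_*\ga$ has no positivity --- is exactly what the missing idea must handle, and the paper resolves it with no contraction at all. It proves (Lemma~\ref{lem-bigdecomp}) that after scaling one can write $f_*\ga = c_1(\cL) + \sum_i n_i[E_i]$ with $\cL$ very ample, $n_i > 0$, and --- the key point, obtained by an elementary iterative reduction --- $c_1(\cL)$ \emph{not} in the $\bQ$-span of the $[E_i]$. One then cuts with general members $C \in |\cL|$ passing through the finite non-equidimensionality locus, takes the support divisor to be $f^{-1}(C) \cup f^{-1}(\cup_i E_i)$ (so the complement of $C \cup \(\cup_i E_i\)$ is affine and Lemma~\ref{lem-isomGysin} applies directly to $f$, with no need for Theorem~\ref{thm-Nakmodbim} nor for transferring the hypothesis of Question~\ref{que-1cycles} to another model, which your reduction would require), and applies Question~\ref{que-1cycles} to this reducible support. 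Pushing the resulting $(1,1)$-classes down to $B$, the contribution over $C$ lies in $\bQ \, c_1(\cL)$ while the contribution over $\cup_i E_i$ lies in the span of the $[E_i]$, so the linear independence forces the class on the desingularization of $f^{-1}(C)$ to have nonzero pushforward to $C$; Lemma~\ref{lem-surffibP} then makes that divisor projective, and the connecting argument via Theorem~\ref{thm-critcamp} concludes. Your proposal has no substitute for this linear-independence mechanism, so as written it does not yield a proof.
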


\begin{proof}
As $f_*\ga \in H^{1,1}(B,\bQ)$ is big, $f_*\ga$ is the sum of an ample curve class and an effective curve class; \cf~\cite[Corollary 2.2.7]{LazarsfeldPosI}. We have the following more precise statement.
	
\begin{lem}\label{lem-bigdecomp}
Up to replacing $\ga$ by a positive multiple of it, there exist a very ample line bundle $\cL$, integral curves $E_1,\ldots,E_l \subset B$, and positive integers $n_1,\ldots, n_l$ such that
$$f_*\ga = c_1(\cL) + \sum_{i=1}^l n_i[E_i] \in H^{1,1}(B,\bQ)$$ and that $c_1(\cL)$ does not lie in the subspace of $H^2(B,\bQ)$ spanned by $[E_1],\ldots, [E_l]$.
\end{lem}
	
\begin{proof}
Since $f_*\ga \in H^{1,1}(B,\bQ)$ is big, up to replacing $\ga$ by a positive multiple of it, there exist a very ample line bundle $\cL'$, integral curves $E_1,\ldots,E_l \subset B$, and $n'_1,\ldots, n'_l \in \bZ_{>0}$ such that
$$f_*\ga = c_1(\cL') + \sum_{i=1}^l n'_i[E_i] \in H^{1,1}(B,\bQ).$$
Suppose that $ c_1(\cL') = \sum_{i=1}^l m_i[E_i]$ for some $m_i \in  \bQ $. Then
		\begin{equation}\label{eqn-comblinpre}
		m_l \cdot f_*\ga = (n'_l + m_l)c_1(\cL') +  \sum_{i=1}^{l-1} \(m_ln'_i  - n'_l m_i\) [E_i].
		\end{equation}
Up to reordering the indices, we can assume that $\frac{m_l}{n'_l} \ge \frac{m_i}{n'_i}$ (so $m_ln'_i - n'_l m_i \ge 0$) for every $i = 1,\ldots, l$.  As $\cL'$ is ample, we have $m_j > 0$ for at least one $j$, so $m_l > 0$. Therefore, up to replacing $\ga$ by a positive multiple of it,~\eqref{eqn-comblinpre} gives a new expression
		\begin{equation}\label{eqn-comblin'}
		f_*\ga = c_1(\cL'') + \sum_{i=1}^{l'} n''_i[E_i] \in H^{1,1}(B,\bQ)
		\end{equation}
for some $l' < l$ and $n_1,\ldots, n_{l'} \in \bZ_{>0}$ together with another very ample line bundle $\cL''$.
		
We can repeat the same procedure as long as $c_1(\cL'') \in \sum_{i=1}^{l'} \bQ [E_i]$. Since the integer $l'$ in~\eqref{eqn-comblin'} decreases strictly, this procedure eventually stops, which gives an expression $f_*\ga = c_1(\cL) + \sum_{i=1}^l n_i[E_i]$ satisfying the properties in Lemma~\ref{lem-bigdecomp}.
\end{proof}
	
Let $Z \subset B$ be the subset such that $\dim f^{-1}(z) > 1$ for every $z \in Z$.  As $f$ is a surjective morphism from an irreducible threefold to a surface, $Z$ is finite.
	
We write $f_*\ga = c_1(\cL) + \sum_{i=1}^l n_i[E_i]$ as in Lemma~\ref{lem-bigdecomp}.  Up to further replacing $\ga$ with a multiple of it, we can find a linear system $T \subset |\cL|$ such that a general member $C$ of $T$ satisfies the following properties: 
	\begin{itemize}
		\item $C$ is smooth and irreducible.
		\item $C \ne E_i$ for every $i$, and 
		$C$ contains $Z \subset B$ (as $Z$ is finite). 
		\item A general pair of points of ${B}$ is connected by a chain of general members of $T$.
	\end{itemize}
Let $C$ be a general member of $T$, and let $E \cnec \cup_i E_i$.  Let $D \cnec f^{-1}(C)$ and $D' \cnec f^{-1}(E)$.  We have the commutative diagram
	\begin{equation}\label{cd-fibell2}
	\begin{tikzcd}[cramped]
	H_2(D \cup D',\bQ)  \ar[r , "\imath_*"] \ar[d] &  H_2(X,\bQ)  \ar[r] \ar[d, "f_*"] & H^{\BM}_2(X \bss (D \cup D'),\bQ)  \ar[d, "\wr"', "(f_{|X\bss (D \cup D')})_*"]  \\
	H_2(C \cup E,\bQ)  \ar[r] & H_2(B,\bQ)  \ar[r] &  H^{\BM}_2(B \bss (C\cup E),\bQ)\rlap{,}
	\end{tikzcd}
	\end{equation}
        where the rows of~\eqref{cd-fibell2} are part of the long exact sequences of Borel--Moore homology groups induced by the open embeddings, \cf~\cite[Section~IX.2, (2.1)]{IversenBook},
        and the vertical arrows between them are induced by $f \colon X \to B$.  The same argument proving Lemma~\ref{lem-isomf'x'd'} shows that the map $(f_{|X\bss (D \cup D')})_*$ in~\eqref{cd-fibell2} is an isomorphism.
		
Let $\ti{D}$ (resp.\ $\ti{D}'$) be a desingularization of $D$ (resp.\ $D'$), and let
	$$
	\ti{\imath} \colon \ti{D} \sqcup \ti{D}' \lra D \cup D' \xhto{\hphantom{aaa}} X
	$$
be the composition of the desingularizations and inclusion.  Since $f_*\ga \in H_2(B,\bQ)$ is supported on $C \cup E$, by~\eqref{cd-fibell2} we have
	$$\ga \in \Ima\(\imath_*  \colon H_2(D \cup D',\bQ) \lra H_2(X,\bQ)\).$$
As we assume that Question~\ref{que-1cycles} has a positive answer for $X$, there exist $\gb \in H^{1,1}(\ti{D},\bQ)$ and $\gb' \in H^{1,1}(\ti{D}',\bQ)$ such that $ \ti{\imath}_*(\gb + \gb') = \ga$.  By construction, we have the factorization
	$$ {f} \circ  \ti{\imath} \colon  \ti{D} \sqcup \ti{D}' 
	\!\xto{\;p \sqcup p'} C \cup E \xhto{\;\;\jmath\;\;}  {B},$$
so 
	\begin{equation}\label{eqn-jpgb}
	\jmath_*p_*\gb + \jmath_*p'_*\gb' =  {f}_* \ti{\imath}_*(\gb + \gb')  
	=  {f}_*\ga = c_1(\cL) + \sum_{i=1}^l n_i[E_i]
	\end{equation}
in $H^2(B,\bQ)$. As $\jmath_*p_*\gb$ is supported on $C$ and $\jmath_*p'_* \gb'$ supported on $\cup_i E_i$, we have
	$$\jmath_*p_*\gb \in \bQ \cdot [C] = \bQ \cdot c_1(\cL) \subset H^2(B,\bQ) \quad
	\text{and} \quad \jmath_*p'_*\gb' \in \sum_i \bQ \cdot [E_i] \subset H^2(B,\bQ).$$
Since $c_1(\cL) \notin \sum_i \bQ \cdot [E_i]$ by Lemma~\ref{lem-bigdecomp}, it follows from~\eqref{eqn-jpgb} that $\jmath_*p_*\gb \ne 0$ and thus $p_*\gb \ne 0 \in H^0(C,\bQ)$.  Therefore, $\ti{D}$ is projective by Lemma~\ref{lem-surffibP}, so $D$ is algebraically connected.
	
As a general pair of points of $ {B}$ is connected by a chain of general members $C$ of $T$, when $C$ varies in $T$, the divisors $D$ connects $ {X}$ by construction. It follows from Theorem~\ref{thm-critcamp} that ${X}$ is projective.
\end{proof}

\begin{proof}[Proof of Corollary~\ref{cor-OPpar} for Problem~\ref{prob-OP}]
As we mentioned before, Theorem~\ref{thm-mainOPK} implies that $a(X) = 2$, so $X$ is bimeromorphic to an elliptic fibration $f \colon X' \to B$ over a projective surface.  By desingularization, we can assume that both $X'$ and $B$ are smooth and $X'$ is K\"ahler.  As $\Int\(\cK(X)^\vee\)$ contains a rational class, $\Int\(\cK(X')^\vee\)$ contains a rational class $\ga$ as well, \cf~\cite[Proposition 2.1]{OguisoPeternellconeKahdual}, and we have $f_*\ga \in \Int\(\cK(B)^\vee\)$ by~\cite[Proposition 2.5]{OguisoPeternellconeKahdual}.  Since $B$ is a smooth projective surface, $f_*\ga \in H^{1,1}(B,\bQ)$ is big by Kleiman's criterion.  Applying Proposition~\ref{pro-big} to the elliptic fibration $f \colon X' \to B$ shows that $X'$ is projective. Hence $X$ is projective.
\end{proof}

Finally, we prove Corollary~\ref{cor-OPpar} for Problem~\ref{prob-OPCampl}. Before we start the proof, let us first recall and prove some statements about subvarieties with ample normal bundles. The first one is a theorem due to Fulton and Lazarsfeld, asserting that a subvariety with ample normal bundle intersects non-negatively with other subvarieties.

\begin{thm}[Fulton--Lazarsfeld {\cite[Corollary 8.4.3]{Lazarsfeld2004a}}]\label{thm-FLpos}
Let $X$ be a compact complex manifold, and let $Y \subset X$ be a local complete intersection subvariety of dimension $k$.  Assume that $N_{Y/X}$ is ample. Then for every subvariety $Z \subset X$ of codimension $k$, we have $Y \cdot Z \ge 0$. Moreover, if $Y \cap Z \ne \emptyset$, then $Y \cdot Z > 0$.
\end{thm}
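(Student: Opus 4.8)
The plan is to follow the Fulton--Lazarsfeld strategy: reduce the intersection number $Y \cdot Z$ to the degree of an intersection class supported on $W \cnec Y \cap Z$, and then invoke the positivity of cone classes attached to the ample bundle $N_{Y/X}$. Set $d \cnec \codim_X Y = n - k$, so that $N \cnec N_{Y/X}$ is a rank-$d$ vector bundle on $Y$ and $\dim Z = d$; in particular $Y$ and $Z$ have complementary dimension and $Y \cdot Z$ is a well-defined integer. Since $Y$ is a local complete intersection, the inclusion $i : Y \hto X$ is a regular embedding, and the refined Gysin homomorphism yields a class $i^![Z] \in A_0(W)$ with $Y \cdot Z = \deg(i^![Z])$. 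When $W = \emptyset$ this class vanishes, giving $Y \cdot Z = 0$; so I may assume $W \ne \emptyset$ and must produce strict positivity.

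The heart of the argument is to rewrite $i^![Z]$ through the normal cone. Regularity of $i$ places the normal cone $C \cnec C_W Z$ of $W$ in $Z$ as a closed subcone of $N|_W$, of pure dimension $\dim Z = d$, and the excess-intersection formula gives $i^![Z] = \{c(N|_W) \cap s(W,Z)\}_0$, where $s(W,Z)$ denotes the Segre class. Projectivizing, with $\xi \cnec c_1(\cO_{\bP(N|_W)}(1))$ and $\bP(C) \subset \bP(N|_W)$ the projectivized cone (an effective cycle of pure dimension $d - 1$, nonempty exactly when $W \ne \emptyset$), the Grothendieck relation for $\bP(N|_W)$ telescopes the Segre-class expression into
$$ Y \cdot Z \;=\; \int_{\bP(C)} \xi^{\,d-1}. $$

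Finally I would apply positivity. Ampleness of $N$ restricts to ampleness of $N|_W$, which by definition means that $\cO_{\bP(N|_W)}(1)$ is ample; thus $\bP(N|_W)$ is projective and $\xi$ is represented by a K\"ahler form. Consequently $\xi^{d-1}$ integrates nonnegatively over the effective cycle $\bP(C)$, and strictly positively over each of its $(d-1)$-dimensional irreducible components, whence $Y \cdot Z \ge 0$, with $Y \cdot Z > 0$ as soon as $\bP(C)$, equivalently $Y \cap Z$, is nonempty.

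I expect the main obstacle to be that $X$ is only assumed to be a compact K\"ahler (not necessarily projective) manifold, so the intersection-theoretic input --- the refined Gysin class and the cone-class formula for $i^![Z]$ --- must be set up in the analytic category rather than imported verbatim from the projective theory. The decisive point that makes the argument go through in this generality is that ampleness of $N_{Y/X}$ forces the auxiliary space $\bP(N|_W)$ to be projective with $\xi$ a genuine ample (K\"ahler) class; this is precisely what upgrades the a priori nonnegativity $\int_{\bP(C)} \xi^{d-1} \ge 0$ to strict positivity over the nonempty projective cycle $\bP(C)$.
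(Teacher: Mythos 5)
First, a point of comparison: the paper does not prove Theorem~\ref{thm-FLpos} at all --- it quotes it as a black box from Lazarsfeld's book (\cite[Corollary 8.4.3]{Lazarsfeld2004a}, going back to Fulton--Lazarsfeld). So your proposal has to be measured against the argument in the literature, and there it reproduces the correct first step but breaks down at the decisive one.

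Your reduction is the right one: with $W \cnec Y \cap Z$, $d \cnec \codim_X Y$ and $N \cnec N_{Y/X}$, one has $Y \cdot Z = \deg \{ c(N|_W) \cap s(W,Z) \}_0$, and everything is localized on $W$, which is projective since it carries an ample vector bundle. The fatal step is the ``telescoping'': the identity $Y \cdot Z = \int_{\bP(C)} \xi^{d-1}$ is false. Two counterexamples. (i) Let $X$ be a compact surface and $Y = Z$ a smooth curve with $\deg N_{Y/X} > 0$. Then $Y \cdot Z = \deg N_{Y/X} > 0$, but $C = C_W Z = C_Y Y$ is the zero section of $N|_W$, so $\bP(C) = \emptyset$ and your formula returns $0$; this also refutes your parenthetical claim that $\bP(C) \ne \emptyset$ exactly when $W \ne \emptyset$. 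The correct identity lives on the projective completion, $Y \cdot Z = \int_{\bP(C \oplus 1)} c_d\bigl(q^* N|_W \otimes \cO(1)\bigr)$, precisely so as to keep the zero-section components. (ii) Even when $C$ has no zero-section components the formula fails: take $Y, Z$ two planes in $\bP^4$ meeting along a line $W$, so $Y \cdot Z = 1$ and $N|_W = \cO_{\bP^1}(1)^{\oplus 2}$. Here $C = N_{W/Z} = \cO_{\bP^1}(1)$ is a sub-line-bundle of $N|_W$, and $\bP(C)$ is the corresponding section of $\bP(N|_W) \to W$, on which the tautological bundle $\cO_{\bP(N|_W)}(1)$ (in the only convention where $\bP(C) \subset \bP(N|_W)$ makes sense, namely $\bP$ = space of lines and $\cO(1)$ = dual of the tautological subbundle) restricts to $C^\vee = \cO_{\bP^1}(-1)$; hence $\int_{\bP(C)} \xi = -1 \ne 1$. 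What the Segre-class expression actually telescopes to, via the tautological sequence $0 \to \cO(-1) \to p^* N|_W \to Q \to 0$, is $Y \cdot Z = \int_{\bP(C)} c_{d-1}(Q)$, the top Chern class of the tautological quotient bundle --- not a power of $\xi$.

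This exposes the conceptual error behind your ``decisive point''. Ampleness of $N$ is equivalent to ampleness of the Serre line bundle on the space of rank-one \emph{quotients} of $N$; on the space of \emph{lines} of $N$, where $\bP(C)$ lives, $\xi$ need not be positive at all, as example (ii) shows. Once the formula is corrected, the positivity is no longer a matter of integrating a K\"ahler class over an effective cycle: $Q$ is only nef (a quotient of $p^* N|_W$, and not ample once $d \ge 3$, since its restriction to a fiber $\bP^{d-1}$ has trivial summands along lines), so even the inequality $\int_{\bP(C)} c_{d-1}(Q) \ge 0$ already invokes the Fulton--Lazarsfeld theorem on positivity of Chern classes of nef bundles, and the strict positivity when $W \ne \emptyset$ is exactly the hard content of the cone-class positivity theorem (\cite[Theorem 12.1]{Fulton}, \cite[Theorem 8.4.1]{Lazarsfeld2004a}), whose proof requires Bloch--Gieseker coverings to reduce to an ample globally generated bundle and then a general-section/incidence argument exploiting the cone structure of $C$. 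In short, your proposal replaces the genuinely difficult step by an integral identity that is incorrect, and whose corrected form does not admit the ``K\"ahler class over an effective cycle'' shortcut; the theorem should be treated as the nontrivial imported result that the paper takes it to be.
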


While the pullback of an effective cycle does not necessarily remain effective, in some situations the ampleness of the normal bundle of a subvariety $Y \subset X$ ensures that the pullback of the cycle class of $Y$ is still effective.

\begin{lem}\label{lem-bimcampeff}
Let $\mu \colon Y \to X$ be a bimeromorphic morphism between smooth compact K\"ahler threefolds, and let $C \subset X$ be a smooth irreducible curve. If\, $N_{C/X}$ is ample, then $\mu^*[C] \in H^4(Y,\bQ)$ is an effective curve class. More precisely, there exists an irreducible curve $\ti{C}$ on $Y$ such that $\mu(\ti{C}) = C$ and $m\mu^*[C] = [\ti{C}]+[C']$ for some $m \in \bZ_{>0}$ and some effective curve class $[C']\in H^4(Y,\bQ)$.
\end{lem}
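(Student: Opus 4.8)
The plan is to separate the problem into (i) proving that $\mu^*[C]$ is an effective $\bQ$-class and (ii) exhibiting the distinguished component $\ti C$. For (ii), the construction below will always present $m\,\mu^*[C]$ as an effective cycle one of whose components $\ti C$ satisfies $\mu(\ti C)=C$; collecting the remaining components into $C'$ then gives $m\,\mu^*[C]=[\ti C]+[C']$. So the heart of the matter is (i).

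For (i) I would first reduce to the case where $\mu$ is a composition of blow-ups along smooth centres. Resolving the meromorphic inverse of $\mu$ by Hironaka produces a smooth compact K\"ahler threefold $Y'$ with a birational morphism $\nu:Y'\to Y$ and a composition of smooth blow-ups $\pi=\mu\circ\nu:Y'\to X$. Since $\nu_*\nu^*=\mathrm{id}$ and proper pushforward preserves effectivity, effectivity of $\pi^*[C]$ on $Y'$ gives effectivity of $\mu^*[C]=\nu_*\pi^*[C]$ on $Y$, and the $\nu$-image of a component dominating $C$ again dominates $C$. One then argues by induction on the number of blow-ups, pulling the effective representative through one blow-up $\sigma:W\to W_1$ along a smooth centre $Z\subset W_1$ at a time.

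The inductive step splits according to whether a component $D$ of the current representative is the centre. If $D\neq Z$, the total transform is $\sigma^*[D]=[\ti D]+\sum_q m_q[\ell]$, with $\ti D$ the strict transform, $\ell$ a fibre of the exceptional divisor $E=\mathbb{P}(N^\vee_{Z/W_1})$, and the $m_q\ge 0$ local intersection multiplicities; this is effective and uses no hypothesis. The decisive case is when the component dominating $C$ is the centre $Z$. Here $E$ is a $\mathbb{P}^1$-bundle over the projective curve $Z$, and the tautological class $h=\cO_E(1)$ satisfies $h\cdot\ell=1$ and $h^2=-\deg N_{Z/W_1}$. Since the ampleness of $N_{C/X}$ makes $\deg N_{Z/W_1}>0$ at the stage where $Z$ is the transform of $C$ (see the obstacle below), we get $h^2<0$, so $h$ is not nef on the projective surface $E$; hence there is an irreducible curve $\ti C\subset E$ with $h\cdot\ti C<0$, necessarily a multisection of $E\to Z$. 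Writing $d=\ti C\cdot\ell>0$ and using $\sigma^*[Z]\cdot E=[Z]\cdot\sigma_*E=0$ and $\ell\cdot E=-1$, one finds $d\,\sigma^*[Z]=[\ti C]+a[\ell]$ with $a=E\cdot\ti C=-h\cdot\ti C>0$; this is effective, with distinguished component $\ti C$. The multiple $d$, the degree of the multisection, is exactly the integer $m$ appearing in the statement.

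The main obstacle is that the single positive quantity on which the decisive case rests, namely $\deg N_{Z/W_1}>0$, need not survive the tower: blowing up a point lying on $C$ lowers the degree of the normal bundle of the strict transform of $C$, so after several such blow-ups the relevant determinant degree may become non-positive and the non-nefness of $h$ can fail. The technical core is therefore to organise the resolution so that the divisor finally contracting onto $C$ does so while $\deg N>0$ is still available---for instance by blowing up $C$ first, so that the ampleness of $N_{C/X}$ is used directly---and to ensure that the intermediate centres are never components of the effective correction $C'$, where no positivity is available. I expect this bookkeeping, rather than either local computation, to be the hard part; the sign $a=-h\cdot\ti C>0$ should be cross-checked against the Fulton--Lazarsfeld positivity of Theorem~\ref{thm-FLpos}.
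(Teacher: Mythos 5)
Your reduction to a tower of smooth blow-ups, your pushforward argument via $\nu_*\nu^*=\mathrm{id}$, and your two local computations (the total-transform formula when the centre is not a component, and the identity $d\,\sigma^*[Z]=[\ti{C}]+a[\ell]$ with $a=-h\cdot\ti{C}>0$ when the centre $Z$ has $\deg N_{Z/W_1}>0$) are all correct, and blowing up $C$ first is indeed the paper's opening move. But the difficulty you flag in your last paragraph is a genuine gap, not bookkeeping, and your proposed remedy cannot be implemented: the tower resolving $\mu^{-1}\circ q$ comes from Hironaka and its centres are not at your disposal, so you cannot ``organise the resolution'' nor ``ensure that the intermediate centres are never components of the effective correction $C'$.'' A later centre may well coincide with a component of your current representative for which no positivity survives --- for instance an exceptional fibre that has become a curve $Z$ with $N_{Z/W}\simeq\cO(-1)\oplus\cO(-1)$, where the exceptional divisor is $\bP^1\times\bP^1$ and $\sigma^*[Z]=j_*(h+g^*c_1(N))=j_*(s-f)$ is not even effective on the exceptional surface; your mechanism (non-nefness of $h$) then produces nothing, and the induction halts. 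Moreover, even in your ``decisive case'' the curve $\ti{C}$ you extract from $h\cdot\ti{C}<0$ may be rigid (e.g.\ a negative section), which makes it a potential centre of the very next blow-up, so the output of one inductive step is ill-suited as input for the next.

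What closes this gap in the paper is a movability argument that your proposal lacks. After the blow-up $q:X'\to X$ along $C$, the class $q^*[C]$ equals $j_*\bigl(g^*c_1(N_{C/X})+c_1(\cO_{E/C}(1))\bigr)$, and by Hartshorne's theorem the ampleness of $N_{C/X}$ makes $g^*c_1(N_{C/X})+c_1(\cO_{E/C}(1))$ an \emph{ample} class on $E$; so a multiple $m\,q^*[C]$ is represented by an irreducible very ample curve $C_1\subset E$, which is \emph{displaceable}: every component of it can be deformed (in $E$) away from any prescribed point. The paper's Lemma~\ref{lem-tech} then shows that displaceability is preserved under each smooth blow-up in the tower $\nu:\ti{X}\to X'$: one first moves the representative off the centre, so only your harmless case ever occurs (strict transform plus non-negative multiples of exceptional fibres, which are themselves displaceable), and the pullback stays effective with an irreducible component still dominating $C$. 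In short, the fix is not to control the centres but to control the representative --- replace it by a movable one once and for all, using the ampleness of $N_{C/X}$ exactly once, at the first step. Without this ingredient (or a substitute for it), your induction does not go through.
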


We need to first prove a technical lemma before proving Lemma~\ref{lem-bimcampeff}.  Let $\sX$ be a compact complex variety.  A curve $\sC \subset \sX$ is called \textit{displaceable} if for every point $x \in \sX$ and every irreducible component $\sC'$ of $\sC$, we can find a curve $\sC'' \subset \sX$ such that $\sC''$ is deformation equivalent to $\sC'$ and $x \notin \sC''$. For any subvariety $\sY \subset \sX$, we say that $\sC$ is \textit{displaceable in $\sY$} if $\sC\subset \sY$ and in the previous definition, $\sC''$ is deformation equivalent to $\sC'$ in $\sY$.

\begin{lem}\label{lem-tech}
Let $\nu \colon \ti{\sX} \to \sX$ be the blow-up of a compact K\"ahler threefold $\sX$ along an irreducible smooth center $Z \subset \sX$.  Let $\sC \subset \sX$ be a displaceable curve.  Then $\nu^*[\sC] \in H^4(\ti{\sX},\bQ)$ can be represented by a displaceable curve $\ti{\sC} \subset \ti{\sX}$.
	
If moreover $\sC$ has an irreducible component $\sC_0$ which is displaceable in some surface $S \subset \sX$, then $\nu^*[\sC] \in H^4(\ti{\sX},\bQ)$ can be represented by a displaceable curve $\ti{\sC} \subset \ti{\sX}$ which contains an irreducible component $\ti{\sC}_0$ such that $\nu(\ti{\sC}_0)$ is deformation equivalent to $\sC_0$ in $S$ and $\ti{\sC}_0$ is displaceable in the strict transform $\ti{S} \subset \ti{\sX}$ of $S$.
\end{lem}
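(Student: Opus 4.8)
The plan is to treat one blow-up at a time and reduce the statement to a pullback formula for curve classes together with a moving argument for strict transforms. Since $\dim \sX = 3$ and $Z$ is smooth and irreducible, either $\dim Z = 2$, in which case $\nu$ is an isomorphism and there is nothing to prove, or $Z$ is a point or a smooth curve. Let $E = \nu^{-1}(Z)$ be the exceptional divisor; it is $\bP^2$ when $Z$ is a point and a $\bP^1$-bundle $E \to Z$ when $Z$ is a curve. By the cohomology of a blow-up, $\ker\(\nu_* : H^4(\ti{\sX},\bQ) \to H^4(\sX,\bQ)\)$ is one-dimensional, spanned by the class $[\ell]$ of a line in $E = \bP^2$ (resp. the class $[f]$ of a fiber of $E \to Z$). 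I would first record the pullback formula: for an irreducible curve $\sC'$ not contained in $Z$, with strict transform $\ti{\sC'}$, one has $\nu^*[\sC'] = [\ti{\sC'}] + r[\ell]$ (resp. $+r[f]$) for some $r \in \bQ$, since the difference lies in $\ker(\nu_*)$. Intersecting with $E$ and using the projection formula (Proposition~\ref{pro-formproj}), $\nu_*(E \cdot \nu^*\beta) = (\nu_* E)\cdot\beta = 0$, together with $E \cdot [\ell] = -1$ (resp. $E\cdot[f]=-1$), yields $r = \ti{\sC'} \cdot E \in \bZ$, a non-negative integer because $\ti{\sC'} \not\subset E$.

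Next I would build the representative. Both $[\ell]$ and $[f]$ are effective and displaceable: lines in $\bP^2$ sweep out $E$, and a fiber of $E \to Z$ can be moved to lie over any other point of $Z$, so in either case a general member avoids any prescribed point. For each component $\sC'$ of $\sC$, choose a general displacement $\sC'_{\mathrm{gen}}$ of $\sC'$ (using that $\sC$ is displaceable; since the displacement family has empty base locus, a general member is not contained in $Z$). Put $r' = \ti{\sC'_{\mathrm{gen}}}\cdot E$ and set $\ti{\sC}$ to be the union of the strict transforms $\ti{\sC'_{\mathrm{gen}}}$ over all components together with $\sum_{\sC'} r'$ general lines, resp. fibers, in $E$. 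Since $\sC'$ and $\sC'_{\mathrm{gen}}$ are homologous, the formula above gives $[\ti{\sC}] = \nu^*[\sC]$, so it remains to check that $\ti{\sC}$ is displaceable, which may be done component by component.

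The heart of the argument will be the displaceability of a strict transform $\ti{\sC'_{\mathrm{gen}}}$. Given any point $\ti x \in \ti{\sX}$, set $x = \nu(\ti x)$. If $\ti x \notin E$, then $x \notin Z$, and displacing $\sC'_{\mathrm{gen}}$ within $\sX$ to a general member avoiding $x$ gives, away from $E$ where $\nu$ is an isomorphism, a strict transform avoiding $\ti x$. If $\ti x \in E$ lies over $z \in Z$, I would instead displace $\sC'_{\mathrm{gen}}$ to a general member $\sC''$ avoiding the point $z \in Z \subset \sX$; since the strict transform meets $E$ only over $\sC'' \cap Z$, one gets $\ti{\sC''} \cap \nu^{-1}(z) = \emptyset$, hence $\ti x \notin \ti{\sC''}$. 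Keeping these displacements general ensures $\ti{\sC''} \cdot E = r'$ stays constant, so all these strict transforms lie in one connected family and are deformation equivalent to $\ti{\sC'_{\mathrm{gen}}}$; this is exactly displaceability. Combined with the displaceability of the added lines, resp. fibers, it follows that $\ti{\sC}$ is displaceable, proving the first assertion.

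For the refined statement I would run the same construction for the distinguished component $\sC_0$ entirely inside $S$. Choosing $\sC_{0,\mathrm{gen}}$ to be a general displacement of $\sC_0$ \emph{within} $S$, its strict transform $\ti{\sC}_0 \subset \ti S$ (the strict transform of $S$) satisfies $\nu(\ti{\sC}_0) = \sC_{0,\mathrm{gen}}$, which is deformation equivalent to $\sC_0$ in $S$ by construction. Running the two-case displacement argument above with all deformations taken inside $S$ — using that $\sC_0$ is displaceable in $S$ to avoid any point $\nu(\ti y) \in S$, in particular any $z \in Z \cap S$ — shows that $\ti{\sC}_0$ is displaceable in $\ti S$, since the strict transform of a curve in $S$ is computed by $\nu|_{\ti S} : \ti S \to S$, the blow-up of $S$ along $Z \cap S$. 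I expect the main obstacle to be precisely this bookkeeping at points of $E$: one must displace the \emph{base} curve off a point of $Z$ rather than off $\ti x$ itself, and verify that the resulting strict transforms remain in a single deformation family with constant intersection number against $E$, both in $\sX$ and, for the refinement, within the surface $S$.
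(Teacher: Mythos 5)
Your proposal is correct and follows essentially the same route as the paper's proof: displace each irreducible component of $\sC$ generically with respect to $Z$, take strict transforms, account for the defect of $\nu^*$ by adding non-negatively many displaceable lines (resp.\ fibers) in the exceptional divisor, and handle the refined statement by performing the displacement of $\sC_0$ inside $S$ so that its strict transform lies in $\ti{S}$. The only differences are cosmetic: where the paper invokes Fulton's blow-up formula to get $\nu^*[\sC] = \sum_i [\ti{\sC}_i'] + m[F]$ with $m \ge 0$, you rederive the same identity by hand via $\ker(\nu_*)$ and intersection with $E$, and you spell out (somewhat more explicitly than the paper does) the two-case verification that strict transforms of general displacements remain displaceable.
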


\begin{proof}
Let $\sC_0,\ldots,\sC_k$ be the irreducible components of $\sC$.  The blow-up center $Z$ is either a point or a curve.  If $Z$ is a point (resp.\ a curve), then since $\sC$ is displaceable, we can choose a general deformation $\sC_i' \subset \sX$ of $\sC_i$ such that $\sC_i' \cap Z = \emptyset$ (resp.\ $\sC_i' \cap Z = \emptyset$ or $\dim \sC_i' \cap Z = 0$). For the irreducible component $\sC_0$, we choose a general deformation $\sC'_0$ of $\sC_0$ which moreover remains in $S$.  By the blow-up formula~\cite[Theorem~6.7]{Fulton}, we have
$$\nu^*[\sC] = \sum_i \nu^*[\sC_i'] = \sum_i [\ti{\sC}_i'] + m[F] \in H^4(\ti{\sX},\bQ)$$
for some $m \in \bZ_{\ge 0}$, where $\ti{\sC}_i' \subset \ti{\sX}$ is the strict transform of $\sC_i'$ and $F$ is a fiber of $\nu^{-1}(Z) \to Z$.

Clearly, $F$ is displaceable.  Since each $\sC_i'$ (resp.\ $\sC_0'$) is a general deformation of the displaceable curve $\sC_i$ (resp.\ $\sC_0$), the strict transform $\ti{\sC}_i'$ is displaceable (resp.\ displaceable in $\ti{S}$).  Finally, since $\ti{\sC}_0'$ is the strict transform of ${\sC}'_0$ under $\nu$, the image $\nu(\ti{\sC}_0') = {\sC}'_0$ is deformation equivalent to $\sC_0$ in $S$ by assumption.
\end{proof}

\begin{proof}[Proof of Lemma~\ref{lem-bimcampeff}]
Let $q \colon X' \to X$ be the blow-up of $X$ along $C$. We resolve the bimeromorphic map $\mu^{-1} \circ q \colon X' \dto Y$ by a sequence of blow-ups along smooth centers $\nu \colon \ti{X} \to X'$ and let $p \colon \ti{X} \to Y$ be the induced bimeromorphic morphism. The following commutative diagram summarizes the situation: 
$$
\begin{tikzcd}[cramped]
\ti{X} \ar[r , "\nu"] \ar[d,"p"] & X' \ar[d,"q"] \\
Y \ar[r , "\mu"] & X\rlap{.}
\end{tikzcd}
$$
Since $\mu^*[C] = p_* \nu^*q^* [C] $, it suffices to prove Lemma~\ref{lem-bimcampeff} for the bimeromorphic morphism $q \circ \nu$.
 
Let $E = q^{-1}(C)$ and $g = {q}_{|E} \colon E \to C$.  By the blow-up formula~\cite[Proposition 6.7(a)]{Fulton}, we have
 \begin{equation}\label{eqn-blqC}
 q^*[C] = j_*c_1\(g^*N_{C/X}\) + j_*c_1\(\cO_{E/C}(1)\),
 \end{equation} 
where $j \colon E \hto X'$ is the inclusion. As $N_{C/X}$ is ample, $c_1(g^*N_{C/X}) + c_1(\cO_{E/C}(1))$ is an ample class in $E$.  Let $m \in \bZ_{>0}$ be such that $\cL \cnec \(\det(g^*N_{C/X}) \otimes \cO_{E/C}(1) \)^{\otimes m}$ is very ample.  We can therefore find an irreducible curve $C_1 \in |\cL|$ which is displaceable in $E$.  As $\nu$ is a sequence of blow-ups of threefolds along smooth centers, applying Lemma~\ref{lem-tech} to these blow-ups yields an irreducible curve $\ti{C} \subset \ti{X}$ such that $\nu(\ti{C})$ is deformation equivalent to $C_1$ in $E$ and $\nu^*[C_1] = [\ti{C}] + [C']$ for some effective curve class $[C']$ in $\ti{X}$.  Since $m\cdot q^*[C] = [C_1]$ by~\eqref{eqn-blqC} and the construction of $C_1$, we have
$m\cdot \nu^*q^*[C] = [\ti{C}] + [C']$.
Finally, since $\nu(\ti{C})$ is deformation equivalent to $C_1$ in $E$ and $g(C_1) = C$, necessarily $q(\nu(\ti{C})) = g(\nu(\ti{C})) = C$.  This proves Lemma~\ref{lem-bimcampeff} for the bimeromorphic morphism $q \circ \nu$.
\end{proof}

\begin{proof}[Proof of Corollary~\ref{cor-OPpar} for Problem~\ref{prob-OPCampl}]

By~\cite[Corollary 4.8]{OguisoPeternellconeKahdual}, we have $a(X) \ge 2$.

Assume that $a(X) = 2$. By Lemma~\ref{lem-redalgph}, the algebraic reduction $f \colon X \dto B$ of $X$ is almost holomorphic, and its general fiber $F$ is an elliptic curve. Let $\gS$ be the irreducible component of the Douady space of $X$ containing $F$, and let
$$
\begin{tikzcd}[cramped]
\cC \ar[r , "q"] \ar[d,"p"] & X \\
\gS
\end{tikzcd}
$$
denote the universal family. Since $X$ is a compact K\"ahler manifold, $\gS$ is compact (\cf~\cite{FujikiClosednessDouady}), and so is~$\cC$. As
$$2 = \dim B \le \dim \gS \le H^0(F,N_{F/X}) = H^0(F,\cO^2_F) =  2,$$ 
the generically injective meromorphic map $\tau \colon B \dto \gS$ induced by the almost holomorphic fibration $X \dto B$ is bimeromorphic. Consequently, $q$ is bimeromorphic.

Let $\nu \colon \gS' \to \gS$ be the normalization of $\gS$, and let $\cC' \cnec \cC \times_{\gS} \gS'$.  We have the commutative diagram
$$
\begin{tikzcd}[cramped]
\ti{\cC} \ar[r, swap, "\ti{\nu}"] \ar[rrr , bend left=40, "\ti{q}"]  \ar[d,"\ti{p}"] & 
\cC' \ar[r ] \ar[d,"p'"]  \ar[rr , bend left=20, "q'"]  \arrow[dr, phantom, "\square"] & 
\cC \ar[r , swap, "q"] \ar[d,"p"] & X \\
\ti{\gS} \ar[r , "\nu'"] & {\gS}' \ar[r, "\nu"] & \gS\rlap{,}
\end{tikzcd}
$$
where $\nu'$ (resp.\ $\ti{\nu}$) is a desingularization of $\gS'$ (resp.\ $\cC'$).  Since $C$ is a smooth curve and $N_{C/X}$ is ample, the irreducible components of $C$ have ample normal bundles as well, so we can assume that $C$ is irreducible.  By Lemma~\ref{lem-bimcampeff}, there exist an $m \in \bZ_{>0}$ and an irreducible curve $\ti{C} \subset \ti{\cC}$ such that $\ti{q}(\ti{C}) = C$ and $m\ti{q}^*[C] = [\ti{C}] + [C'] \in H^4(\ti{\cC},\bQ)$ for some effective curve class $[C']$.
 
Now we show that $\ti{p}_*([\ti{C}] + [C'])$ is big.  Since $\ti{p}_*([\ti{C}] + [C'])$ is effective, it suffices to show that $\(\ti{p}_*([\ti{C}] + [C'])\)^2 > 0$.  First we note that if $F'$ is a fiber of $p'$, then $C \nsubset q'(F')$.  Indeed, if $C \subset q'(F')$, then since $p'$ is flat and a general fiber of $p$ is an elliptic curve, we would have $g(C) \le 1$. So $X$ would be projective by~\cite[Corollary 4.6]{OguisoPeternellconeKahdual}, contradicting $a(X) = 2$.  It follows from $C \nsubset q'(F')$ that $\nu'(\ti{p}(\ti{C})) \subset \gS'$ is a curve.

\begin{claim} 
  The analytic subset
  $D \cnec \ti{q} \( \ti{p}^{-1}( \ti{p}(\ti{C}))\)$ has a divisorial component $D'$ containing $C$.
\end{claim}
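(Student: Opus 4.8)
The plan is to realize the desired $D'$ as $\ti{q}(W_0)$, where $W_0$ is the divisorial component of $\ti{p}^{-1}(\Gamma)$ dominating the curve $\Gamma \cnec \ti{p}(\ti{C}) \subset \ti{\gS}$. Having just seen that $\nu'(\ti{p}(\ti{C}))$ is a curve, I first record that $\Gamma$ is an irreducible curve in $\ti{\gS}$. Over a suitable dense open $\Gamma^\circ \subset \Gamma$ the fibres of $\ti{p}$ are irreducible curves, since the general fibre of $\ti{p}$ is birational to an elliptic fibre of the algebraic reduction; hence $\ti{p}^{-1}(\Gamma^\circ)$ fibres over the irreducible curve $\Gamma^\circ$ with irreducible one-dimensional fibres and is therefore irreducible of dimension $2$. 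I then set $W_0 \cnec \overline{\ti{p}^{-1}(\Gamma^\circ)}$, an irreducible divisor in $\ti{\cC}$.

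Next I would verify the two containments $\ti{C} \subset W_0$ and $C \subset \ti{q}(W_0) \subset D$. The first holds because $\ti{p}_{|\ti{C}} : \ti{C} \to \Gamma$ is a dominant morphism of curves, so $\ti{C} \cap \ti{p}^{-1}(\Gamma^\circ)$ is dense in $\ti{C}$ and passing to closures gives $\ti{C} \subset W_0$. The second is then immediate: $C = \ti{q}(\ti{C}) \subset \ti{q}(W_0)$, and $\ti{q}(W_0) \subset \ti{q}(\ti{p}^{-1}(\Gamma)) = D$. Thus $D' \cnec \ti{q}(W_0)$ contains $C$ and is a subvariety of $D$; it only remains to see that it is two-dimensional.

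This last point is where the real work lies. Since $\ti{q}$ is bimeromorphic (being the composite of $q$ with the bimeromorphic maps in the diagram), it does not contract the general fibre of $\ti{p}$: for general $\gamma \in \Gamma^\circ$ the curve $\ti{q}(\ti{p}^{-1}(\gamma))$ is the member of the family of curves in $X$ parametrized by the image of $\gamma$ in $\gS$. Distinct general $\gamma$ give distinct such curves --- this uses the universal property of the Douady family together with the generic injectivity of $\ti{\gS} \to \gS$ --- and all of them are contained in the irreducible set $D' = \ti{q}(W_0)$. An irreducible curve cannot contain infinitely many distinct irreducible curves, so $D'$ is not one-dimensional; as $\dim W_0 = 2$, this forces $\dim D' = 2$. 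Hence $D'$ is an irreducible divisorial component of $D$ containing $C$, as claimed. I expect the only genuinely delicate inputs to be the non-contraction of the general fibre of $\ti{p}$ and the distinctness of the curves $\ti{q}(\ti{p}^{-1}(\gamma))$; both ultimately rest on $\ti{q}$ being bimeromorphic and on $\ti{C}$ being horizontal over $\ti{\gS}$, which is precisely what the ampleness of $N_{C/X}$ secured earlier.
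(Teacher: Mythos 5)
Your overall strategy --- sweep out a surface by the curves $\ti{q}(\ti{p}^{-1}(\gamma))$, $\gamma \in \Gamma$, and use the Douady space to see that infinitely many distinct curves occur --- is the same sweeping idea as the paper's, but your implementation has a genuine gap at its first step, and it is precisely the step the paper's proof is built to avoid. You assert that over a dense open $\Gamma^\circ \subset \Gamma$ the fibres of $\ti{p}$ are irreducible curves, ``since the general fibre of $\ti{p}$ is birational to an elliptic fibre of the algebraic reduction.'' That justification concerns fibres over \emph{general points of the surface} $\ti{\gS}$, whereas $\Gamma = \ti{p}(\ti{C})$ is a fixed, special curve: nothing prevents it from lying entirely inside the discriminant locus of the elliptic fibration $\ti{p}$, over which every fibre may be reducible (\eg of Kodaira type $I_n$) or non-reduced. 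So neither the irreducibility of $W_0$ nor the irreducibility of the members $\ti{q}(\ti{p}^{-1}(\gamma))$ --- both of which your dimension count uses --- is available.

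This is not a removable technicality. If the fibres over $\Gamma$ are reducible, you must select a component of $\ol{\ti{p}^{-1}(\Gamma^\circ)}$ containing $\ti{C}$, and since $\ti{q}$ is merely bimeromorphic it could contract exactly that component onto the curve $C$ (bimeromorphic morphisms of threefolds do contract divisors onto curves); then ``$\dim W_0 = 2$, hence $\dim \ti{q}(W_0) = 2$'' fails for the one component you need, and no divisorial component through $C$ is produced. The paper never chooses components and never needs irreducible fibres: it sets $D' \cnec q'(p'^{-1}(\ol{\nu'(C^\circ)})) = q(p^{-1}(\nu(\ol{\nu'(C^\circ)})))$ \emph{downstairs}, so that $D'$ is by construction the union of the complete members of the Douady family parametrized by a curve in $\gS$ --- hence a divisor --- and contains $C$ because $\ti{\nu}(\ti{C}) \subset p'^{-1}(\ol{\nu'(C^\circ)})$. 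What excludes the contraction-onto-$C$ scenario is the fact proved immediately before the Claim, namely $C \not\subset q'(F')$ for every fibre $F'$ of $p'$: members of the family pass through a dense set of points of $C$, and if the only component of $D$ through a general such point were $C$ itself, the member through that point would be forced to contain $C$. Your argument never invokes this fact, yet some form of it is indispensable. Two smaller points: identifying $\ti{q}(\ti{p}^{-1}(\gamma))$ with a member of the universal family requires $\gamma$ to avoid the $\nu'$-exceptional divisor $E$ (this is why the paper first restricts to $U = \ti{\gS} \bss E$; it is harmless here since $\Gamma \not\subset E$), and ``generic injectivity of $\ti{\gS} \to \gS$'' is not the right input for distinctness --- injectivity can fail along $\Gamma$, \eg if its image lies in the non-normal locus of $\gS$; what one actually uses is that the image of $\Gamma$ in $\gS$ is a curve, so infinitely many distinct members occur.
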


\begin{proof}
Let $E \subset \ti{\gS}$ be the exceptional divisor of $\nu'$, and let $U \cnec \ti{\gS} \bss E$.  Since $\nu'$ is a bimeromorphic morphism between normal surfaces and since $\nu'(\ti{p}(\ti{C})) \subset \gS'$ is a curve, we have $C^\circ \cnec \ti{p}(\ti{C}) \cap U \ne \emptyset$, which is a curve, and $\nu(\nu'(C^\circ))$ is a curve as well.  As $p \colon \cC \to \gS$ is the universal family of the Douady space $\gS$, the complex subspace
 $$D' \cnec q'(p'^{-1}(\ol{\nu'(C^\circ)})) = q(p^{-1}(\nu(\ol{\nu'(C^\circ)})))  \subset X,$$ 
which is the union of curves parameterized by $\nu(\ol{\nu'(C^\circ)})$, is a divisor and $D'$ contains $C$.  Finally, note that since $\nu'_{|U}$ is isomorphic onto its image, we have $p'^{-1}(\nu'(C^\circ)) \subset \ti{\nu}(\ti{p}^{-1}(C^\circ))$.  Hence $D' \subset D$.
 \end{proof}

By the above claim, it follows from Theorem~\ref{thm-FLpos} that
\begin{equation*}
  [C] \cdot \ti{q}_*\ti{p}^*\ti{p}_*([\ti{C}] + [C']) \ge [C] \cdot [D] \ge [C] \cdot [D'] > 0,
  \end{equation*} so
\begin{equation}
\begin{split}
\(\ti{p}_*([\ti{C}] + [C'])\)^2  & = 
([\ti{C}] + [C']) \cdot \ti{p}^*\ti{p}_*([\ti{C}] + [C']) \\
& = m\ti{q}^*[C] \cdot \ti{p}^*\ti{p}_*([\ti{C}] + [C'])  = m[C] \cdot \ti{q}_*\ti{p}^*\ti{p}_*([\ti{C}] + [C']) > 0,
\end{split}
\end{equation}
which shows that $\ti{p}_*([\ti{C}] + [C'])$ is big.

Since $\ti{p}$ is bimeromorphic to $p$, $\ti{p}$ is an elliptic fibration.  As $\ti{p}_*([\ti{C}] + [C'])$ is big, it follows from Proposition~\ref{pro-big} that $\ti{\cC}$ is projective, which contradicts the assumption that $a(X) = 2$ because $X$ is bimeromorphic to $\ti{\cC}$. Hence $X$ is projective.
\end{proof}

%%%%%%%%%%%%%%%%%%%%%
% References
%%%%%%%%%%%%%%%%%%%%%

\newcommand{\etalchar}[1]{$^{#1}$}


\begin{thebibliography}{{Huy}03b+++}
  
\bibitem[BL21]{BakkerLehnSingTor}
B.~Bakker and C.~Lehn, \emph{A global {T}orelli theorem for singular symplectic
  varieties}, J.~Eur.\ Math.\ Soc.\ (JEMS) \textbf{23} (2021), no.~3, 949--994.

\bibitem[BHP{\etalchar{+}}04]{Barth}
W.\,P.~Barth, K.~Hulek, C.\,A.\,M.~Peters, and A.~Van De Ven, \emph{Compact complex  surfaces}, 2nd ed., Ergeb.\ Math.\ Grenzgeb.~(3), vol.~4, Springer-Verlag, Berlin, 2004.

\bibitem[Bea83a]{BeauvilleTransfFM}
A.~Beauville, \emph{Quelques remarques sur la transformation de {F}ourier dans
  l'anneau de {C}how d'une vari\'et\'e ab\'elienne}, in: \emph{Algebraic geometry}  (Tokyo/Kyoto 1982), pp.~238--260, Lecture Notes Math., vol. 1016, Springer-Verlag, Berlin, 1983.

\bibitem[Bea83b]{MR730926}
\bysame, \emph{Vari\'{e}t\'{e}s {K}\"{a}hleriennes dont la premi\`ere
  classe de {C}hern est nulle}, J.~Differential Geom.\ \textbf{18} (1983),
  no.~4, 755--782.

\bibitem[BL04]{CabV}
C.~Birkenhake and H.~Lange, \emph{Complex abelian varieties}, 2nd ed.,
  Grundlehren math.\ Wiss., vol. 302, Springer-Verlag, Berlin, 2004.

\bibitem[BDP{\etalchar{+}}13]{BDPP}
S.~Boucksom, J.-P.~Demailly, M.~P\u{a}un, and T.~Peternell, \emph{The
  pseudo-effective cone of a compact {K}\"{a}hler manifold and varieties of
  negative {K}odaira dimension}, J.~Algebraic Geom.\ \textbf{22} (2013), no.~2,
  201--248.

\bibitem[Bri99]{BrionRatsm}
  M.~Brion, \emph{Rational smoothness and fixed points of torus actions} (Dedicated to the memory of Claude Chevalley), Transform.\ Groups \textbf{4} (1999), no.~2-3, 127--156.
  
\bibitem[Cam81a]{CampanaCored}
F.~Campana, \emph{Cor\'eduction alg\'ebrique d'un espace analytique faiblement
  K\"ahl\'erien compact}, Invent.\ Math.\ \textbf{63} (1981), no.~2, 187--223.

\bibitem[Cam81b]{Campana-redalg}
\bysame, \emph{R\'eduction alg\'ebrique d'un morphisme faiblement
  {K}\"ahl\'erien propre et applications}, Math.\ Ann.\ \textbf{256} (1981),
  no.~2, 157--189.

\bibitem[CHP16]{CHPabun}
F.~Campana, A.~H\"oring, and T.~Peternell, \emph{Abundance for {K}\"ahler
  threefolds}, Ann.\ Sci.\ \'Ec.\ Norm.\ Sup\'er.~(4) \textbf{49} (2016), no.~4,
  971--1025.

\bibitem[CP00]{CampanaPeternell2-form}
F.~Campana and T.~Peternell, \emph{Complex threefolds with non-trivial
  holomorphic {$2$}-forms}, J.~Algebraic Geom.\ \textbf{9} (2000), no.~2,
  223--264.

\bibitem[{Cla}18]{ClaudonToridefequiv}
B.~{Claudon}, \emph{Smooth families of tori and linear K\"ahler groups},
  Ann.\ Fac.\ Sci.\ Toulouse Math.~(6) \textbf{27} (2018), no.~3, 477--496.

\bibitem[DO23]{das2023log}
O.~Das and W.~Ou, \emph{On the Log Abundance for Compact {K{\"a}hler}
  Threefolds II}, preprint \arXiv{2306.00671} (2023).

\bibitem[Dem12]{DemaillyAM}
J.-P.~Demailly, \emph{Analytic methods in algebraic geometry}, Surv.\ Mod.\ Math., vol.~1, International Press, Somerville, MA; Higher
  Education Press, Beijing, 2012.

\bibitem[{Dim}04]{Dimca2004}
A.~{Dimca}, \emph{Sheaves in topology}, Universitext, Springer-Verlag, Berlin, 2004.

\bibitem[EZZ84]{EZnormfct}
F.~El~Zein and S.~Zucker, \emph{Extendability of normal functions associated to
  algebraic cycles}, in: \emph{Topics in transcendental algebraic geometry} ({P}rinceton, {NJ}, 1981/1982), pp.~269--288, Ann.\ of Math.\ Stud., vol. 106, Princeton Univ.\ Press,  Princeton, NJ, 1984.

\bibitem[Fuj79]{FujikiClosednessDouady}
A.~Fujiki, \emph{Closedness of the {D}ouady spaces of compact {K}\"{a}hler
  spaces}, Publ.\ Res.\ Inst.\ Math.\ Sci.\ \textbf{14} (1978/79), no.~1, 1--52.

\bibitem[Fuj80]{FujikiDual}
\bysame, \emph{Duality of mixed {H}odge structures of algebraic varieties},
  Publ.\ Res.\ Inst.\ Math.\ Sci.\ \textbf{16} (1980), no.~3, 635--667.

\bibitem[Fuj83]{Fujiki1983}
\bysame, \emph{On the structure of compact complex manifolds in
{${\mathcal{C}}$}}, in: \emph{Algebraic varieties and analytic varieties} ({T}okyo,  1981),   pp.~231--302, Adv.\ Stud.\ Pure Math., vol.~1, North-Holland Publishing Co., Amsterdam, 1983. 

\bibitem[Ful98]{Fulton}
W.~Fulton, \emph{Intersection theory}, 2nd ed., Ergeb.\ Math.\ Grenzgeb.~(3), vol.~2, {Springer-Verlag}, Berlin, 1998.

\bibitem[HP18]{HorPetsurvey}
A.~H\"oring and T.~Peternell, \emph{Bimeromorphic geometry of {K}\"ahler
  threefolds}, in: \emph{Algebraic geometry: Salt Lake City 2015}, pp.~381--402, Proc.\ Sympos.\ Pure Math., vol.~97.1, Amer.\ Math.\ Soc., Providence, RI, 2018. 
  
\bibitem[Huy99]{HuybCHKbasic}
D.~Huybrechts, \emph{Compact hyper-{K}\"ahler manifolds: basic results},
  Invent.\ Math.\ \textbf{135} (1999), no.~1, 63--113.

\bibitem[Huy03a]{HuybHK}
  \bysame, \emph{Compact hyperk\"ahler manifolds}, in: \emph{Calabi-{Y}au manifolds and  related geometries} (Nordfjordeid, 2001), pp.~161--225, 
  Universitext, Springer-Verlag, Berlin, 2003.
  
\bibitem[{Huy}03b]{Huybrechts2003}
\bysame, \emph{Erratum: ``Compact hyper-K\"ahler manifolds: basic
results'' [Invent.\ Math.\ \textbf{135} (1999), no.~1, 63--113]}, Invent.\ Math.\  \textbf{152} (2003), no.~1, 209--212.

\bibitem[{Ive}86]{IversenBook}
B.~{Iversen}, \emph{{Cohomology of sheaves}}, Universitext, Springer-Verlag, Berlin, 1986.

\bibitem[{Kod}54]{Kodaira1954}
K.~{Kodaira}, \emph{On {K}\"ahler varieties of restricted type (an intrinsic
  characterization of algebraic varieties)}, Ann.\ of Math.~(2) \textbf{60} (1954), 28--48.

\bibitem[Laz04a]{LazarsfeldPosI}
R.~Lazarsfeld, \emph{Positivity in Algebraic Geometry {I}. Classical Setting: Line Bundles and Linear Series}, Ergeb.\ Math.\ Grenzgeb.~(3), vol.~48, Springer-Verlag, Berlin, 2004,
 
\bibitem[{Laz}04b]{Lazarsfeld2004a}
\bysame, \emph{Positivity in Algebraic Geometry II.  Positivity for Vector Bundles, and Multiplier Ideals}, Ergeb.\ Math.\ Grenzgeb.~(3), vol.~49, Springer-Verlag, Berlin, 2004,
 
\bibitem[{Lin}21]{HYLkod3}
H.-Y.~Lin, \emph{Algebraic approximations of compact K\"ahler threefolds}, preprint \arXiv{1710.01083v3} (2021).

\bibitem[Lin22]{HYLbimkod1}
\bysame, \emph{Algebraic approximations of fibrations in abelian varieties
  over a curve}, J.~Algebraic Geom.\ \textbf{31} (2022), no.~1, 75--103.

\bibitem[Nak02]{NakayamaLoc}
N.~Nakayama, \emph{Local structure of an elliptic fibration}, in: \emph{Higher
  dimensional birational geometry} ({K}yoto, 1997), pp.~185--295, Adv.\ Stud.\ Pure Math.,  vol.~35, Math. Soc. Japan, Tokyo, 2002.

\bibitem[OP00]{OguisoPeternellconeKahduasurf}
K.~Oguiso and T.~Peternell, \emph{Projectivity via the dual {K}\"{a}hler
  cone---{H}uybrechts' criterion} (Kodaira's issue), Asian J.~Math.\ \textbf{4} (2000), no.~1,  213--220.

\bibitem[OP04]{OguisoPeternellconeKahdual}
\bysame, \emph{The dual {K}\"{a}hler cone of compact {K}\"{a}hler threefolds},
  Comm.\ Anal.\ Geom.\ \textbf{12} (2004), no.~5, 1131--1154.

\bibitem[{Ott}16]{Ottem2016}
J.\,C.~Ottem, \emph{On subvarieties with ample normal bundle}, J.~Eur.\ Math.\ Soc.\ (JEMS) \textbf{18} (2016), no.~11, 2459--2468.

\bibitem[PS08]{PetersSteenbrinkMHS}
C.\,A.\,M.~Peters and J.\,H.\,M.~Steenbrink, \emph{Mixed {H}odge structures},
  Ergeb.\ Math.\ Grenzgeb.~(3), vol.~52, Springer-Verlag,
  Berlin, 2008.

\bibitem[P{\u{a}u}98]{Paunthese}
M.~P\u{a}un, \emph{Sur l'effectivit\'{e} num\'{e}rique des images inverses de
  fibr\'{e}s en droites}, Math.\ Ann.\ \textbf{310} (1998), no.~3, 411--421.

\bibitem[Uen75]{UenoClassAlgVar}
K.~Ueno, \emph{Classification theory of algebraic varieties and compact complex
  spaces} (Notes written in collaboration with P.~Cherenack), Lecture Notes in Math., vol. 439, Springer-Verlag, Berlin-New York, 1975. 

\bibitem[Var84]{VarouchasKahdom}
J.~Varouchas, \emph{Stabilit{\'e} de la classe des vari{\'e}t{\'e}s
  {Kaehleriennes} par certains morphismes propres}, Invent.\ Math.\ \textbf{77}
  (1984), no.~1, 117--127.

\bibitem[Var89]{VarouchasKahsp}
\bysame, \emph{K\"{a}hler spaces and proper open morphisms}, Math.\ Ann.\
  \textbf{283} (1989), no.~1, 13--52.

\bibitem[Voi14]{Voisinchowdecomp}
C.~Voisin, \emph{Chow rings, decomposition of the diagonal, and the topology of
  families}, Annals of Math.\ Stud., vol. 187, Princeton Univ.\ Press, Princeton, NJ, 2014.

\end{thebibliography}
\end{document}